\def\bint{{\ifinner\rlap{\bf\kern.30em--}
\int\else\rlap{\bf\kern.35em--}\int\fi}\ignorespaces}
\def\sbint{{\ifinner\rlap{\bf\kern.32em--}
\hspace{0.078cm}\int\else\rlap{\bf\kern.45em--}\int\fi}\ignorespaces}
\def\vpz{\vphantom}
\def\rr{\mathbb{R}}
\def\rn{\mathbb{R}^n}
\def\cc{\mathbb{C}}
\def\nn{\mathbb{N}}
\def\zz{\mathbb{Z}}
\def\bz{\beta}
\def\gz{{\gamma}}
\def\ls{\lesssim}
\def\fz{\infty}
\def\az{\alpha}
\def\ca{{\mathcal A}}
\def\cg{{\mathcal G}}
\def\cn{{\mathcal N}}
\def\cx{{\mathcal X}}
\def\cy{{\mathcal Y}}
\def\hs{\hspace{0.35cm}}
\def\lp{{L^p(\mathcal{X})}}
\def\icgg{\mathcal{G}_0^\eta(\beta,\gamma)}
\def\cgg{\mathring{\mathcal{G}}_0^\eta(\beta,\gamma)}
\def\hb{\dot{B}^s_{p,q}(\mathcal{X})}
\def\hf{\dot{F}^s_{p,q}(\mathcal{X})}
\def\ihb{B^s_{p,q}(\mathcal{X})}
\def\ihf{F^s_{p,q}(\mathcal{X})}
\def\hfi{\dot{F}^s_{\infty,q}(\mathcal{X})}
\def\ihfi{F^s_{\infty,q}(\mathcal{X})}
\def\lips{\dot{L}(s,p,q;\cx)}
\def\ilips{L(s,p,q;\cx)}
\def\lipsb{\dot{L}_b(s,p,q;\cx)}
\def\ilipsb{L_b(s,p,q;\cx)}
\def\lipst{\dot{L}_t(s,p,q;\cx)}
\def\ilipst{L_t(s,p,q;\cx)}
\def\lipstu{\dot{L}_t(s,p,q,u;\cx)}
\def\ilipstu{L_t(s,p,q,u;\cx)}
\def\r{\right}
\def\lf{\left}
\def\noz{{\nonumber}}
\def\r{\right}
\def\lf{\left}
\def\supp{{\mathop\mathrm{\,supp\,}}}
\def\diam{{\mathop\mathrm{\,diam\,}}}
\def\loc{{\mathop\mathrm{\,loc\,}}}
\DeclareMathOperator*{\esssup}{ess\ sup}
\def\eqref#1{(\ref{#1})}
\def\func#1{\mathop{\mathrm{#1}}}
\def\diam{\func{diam}}
\def\supp{\func{supp}}
\def\ya{y_\alpha^{k,m}}
\def\qa{Q_\alpha^{k,m}}
\def\qo{Q_\alpha^{0,m}}
\def\red{\color{red}}
\newtheorem{theorem}{Theorem}[section]
\newtheorem{lemma}[theorem]{Lemma}
\newtheorem{proposition}[theorem]{Proposition}
\theoremstyle{definition}
\newtheorem{remark}[theorem]{Remark}
\newtheorem{definition}[theorem]{Definition}
\numberwithin{equation}{section}
\begin{document}

\title{\bf\Large  Difference Characterization
of Besov and Triebel--Lizorkin
Spaces on Spaces of Homogeneous Type
\footnotetext{\hspace{-0.35cm} 2020 {\it Mathematics Subject Classification}. Primary 46E36;
Secondary 46E35, 42B25, 42B20, 42B35, 30L99.\endgraf
{\it Key words and phrases.} space of
homogeneous type, Calder\'on reproducing formula,
Besov space, Triebel--Lizorkin space, Lipschitz-type space, difference.\endgraf
This project is partially supported by the National Key Research and Development Program of China
(Grant No.\ 2020YFA0712900) and the National
Natural Science Foundation of China (Grant Nos.\
11971058, 12071197 and 11871100).}}
\date{}
\author{Fan Wang, Ziyi He, Dachun Yang\footnote{Corresponding author,
E-mail: \texttt{dcyang@bnu.edu.cn}/{\red January 9, 2021}/Final version.}\ \ and Wen Yuan}
\maketitle

\vspace{-0.8cm}

\begin{center}
\begin{minipage}{13cm}
{\small {\bf Abstract}\quad In this article, the
authors introduce
the spaces of Lipschitz type on spaces of
homogeneous type
in the sense of Coifman and Weiss,
and discuss their  relations  with Besov and
Triebel--Lizorkin spaces.
As an application, the authors establish the
difference characterization of Besov
and Triebel--Lizorkin
spaces on spaces of homogeneous type. A
major novelty of this article is
that all results presented in this article get rid
of the dependence on
the reverse doubling assumption of the
considered measure of
the underlying space ${\mathcal X}$ via using the
geometrical property of ${\mathcal X}$
expressed by its dyadic reference points,
dyadic cubes, and the  (local) lower bound.
Moreover, some results when $p\le 1$ but near to $1$ are
new even when ${\mathcal X}$ is an RD-space.
}
\end{minipage}
\end{center}

\vspace{0.2cm}



\section{Introduction}\label{intro}

In 1970s, Coifman and Weiss \cite{cw71,cw77}
introduced the space of homogeneous type
which is a natural generalization of $\rn$ and
has provided a suitable setting
for the study of function spaces and the
boundedness
of Calder\'on--Zygmund operators. Let us
recall the following notion of quasi-metric
spaces,
which is the basis of the notion of spaces of
homogeneous type.

\begin{definition}\label{10.21.1}
Let $\cx$ be a non-empty set and $d$ a
\emph{quasi-metric} on $\cx$,
namely, a non-negative function on $
\cx\times\cx$ satisfying that, for any $x,\ y,\ z
\in \cx$,
\begin{enumerate}
\item[{\rm(i)}] $d(x,y)=0$ if and only if $x=y$;
\item[{\rm(ii)}] $d(x,y)=d(y,x)$;
\item[{\rm(iii)}] there exists a constant $A_0 \in
[1, \infty)$, independent of $x$, $y$,
and $z$, such that
$$d(x,z)\leq A_0[d(x,y)+d(y,z)].$$
\end{enumerate}
Then $(\cx, d)$ is called a \emph{quasi-metric
space}.
\end{definition}

If $A_0=1$, then $d$ becomes a \emph{metric}
and $(\cx,d)$ is called a \emph{metric space}.
The \emph{ball} $B$ of $\cx$, centered at
$x_0 \in \cx$ with radius
$r\in(0, \infty)$, is defined by setting
$$
B:=\{x\in\cx: d(x,x_0)<r\}=:B(x_0,r).
$$
We denote by $\tau B$ the ball with the same
center as that of $B$ but
of radius $\tau$ times that of $B$ for any ball
$B\subset\cx$ and $\tau\in(0,\infty)$.

Now, let us recall the notion of spaces of
homogeneous type in the sense of Coifman and
Weiss (\cite{cw71,cw77}).
\begin{definition}\label{10.21.2}
Let $(\cx,d)$ be a quasi-metric space and
$\mu$ a measure on $\cx$.
The triple $(\cx,d,\mu)$ is called
a \emph{space of homogeneous type} if $\mu$
has the following doubling property: there
exists a
positive constant $C_{(\mu)}\in[1,\infty)$ such
that, for any ball $B \subset \cx$,
$$
\mu(2B)\leq C_{(\mu)}\mu(B).
$$
\end{definition}

Note that the above doubling condition implies
that, for any ball $B$ and any
$\lambda\in[1,\infty)$,
\begin{equation}\label{eq-doub}
\mu(\lambda B)\leq C_{(\mu)}
\lambda^\omega\mu(B),
\end{equation}
where $\omega:= \log_2 C_{(\mu)}$ is called
the \emph{upper dimension} of $\cx$.
If $A_0 = 1$, then
$(\cx,d,\mu)$ is called a \emph{metric
measure space of homogeneous type} or,
simply, a \emph{doubling metric measure space}.

In recent years, function spaces and their
applications on spaces of homogeneous type,
with some additional assumptions,
have been extensively investigated; see, for
instance, \cite{am15,
dh09,hy02,y02,yy19} for the Ahlfors
$d$-regular space case, and
\cite{hmy06,hmy08,yz10,zyh20,zsy16} for the
RD-space case.
Recall that an \emph{RD-space} is a doubling
metric measure space satisfying the
following \emph{reverse doubling condition}:
there exist positive constants $\widetilde{C}
_{(\mu)}\in(0,1]$
and $\kappa\in(0,\omega]$ such that, for any
ball $B(x, r)$ with $r\in(0, \diam \cx/2)$ and
$\lambda\in[1,\diam \cx/(2r))$,
\begin{equation*}
\widetilde{C}_{(\mu)}\lambda^\kappa\mu(B(x,
r))\leq\mu(B(x, \lambda r));
\end{equation*}
here and thereafter, $\diam \cx:=\sup\{d(x,y):\
x,\ y\in \cx\}$.

Recently, Auscher and Hyt\"{o}nen  established
a wavelet system
on $(\cx,d,\mu)$ in \cite{ah13} based on
\cite[Theorem 2.2]{hk}.
Motivated by this, He et al.\ \cite[Definition
2.7]{hlyy} introduced a new kind of
approximations of the
identity with exponential decay (see also Definitions
\ref{10.23.2} and \ref{iati}
below for details).
He et al. \cite{hlyy} also established (in)homogeneous
continuous/discrete Calder\'on reproducing
formulae (which are re-stated in Lemmas \ref{crf}
and \ref{icrf} below) on a space of
homogeneous type by using
these approximations of the identity.
Using the wavelet system in \cite{ah13} and
the Calder\'{o}n reproducing formulae
in \cite{hlyy}, a real-variable theory of function
spaces on a space of homogeneous type has
been developed
rapidly. For instance, Han et al.\ \cite{hlw}
established the wavelet reproducing formulae
by using the
wavelets in \cite{ah13}. Then Han et al.\
\cite{hhl16} characterized the atomic Hardy
spaces via discrete Littlewood--Paley square
functions in \cite{hlyy} by using those formulae.
Later, Han et al.
\cite{hhl18} introduced a new kind of Hardy
spaces by using another kind of distribution
spaces.
Right after the Calder\'{o}n reproducing
formulae were established in \cite{hlyy}, He et
al.\ \cite{hhllyy}
obtained a complete real-variable theory of
atomic Hardy spaces on a
space $(\cx,d,\mu)$ of homogeneous type
with $\mu(\cx)=\fz$ which is equivalent to
$\diam\cx=\fz$
(see, for instance, Nakai and Yabuta
\cite[Lemma 5.1]{ny97}
or Auscher and Hyt\"{o}nen \cite[Lemma 8.1]
{ah13}). Besides, He et al.\ \cite{hyy19}
established a real-variable theory of local
Hardy spaces on
$(\cx,d,\mu)$ without the assumption $
\mu(\cx)=\fz$. We point out that,
in both \cite{hhllyy} and \cite{hyy19}, He et al.\
gave a complete answer to an open question
asked by Coifman
and Weiss \cite[p.\ 642]{cw77} on the radial
maximal function characterization
of Hardy spaces over spaces of homogeneous
type (see also \cite[p.\ 5]{bdl20}). Later,
Fu et al.\ \cite{fmt19} obtained a real-variable
theory of Musielak--Orlicz Hardy spaces on $\cx$.
Besides, Zhou et al. \cite{zyh20} established a
real-variable theory of
Hardy--Lorentz spaces on spaces of
homogeneous type.
On another hand, Duong and Yan \cite{dy03}
investigated Hardy spaces defined
by means of the area integral function
associated with the Poisson semigroup.
Later, Song and Yan \cite{sy18} obtained
various maximal function
characterizations of Hardy spaces associated
with operators. Moreover, Bui et al.\
\cite{bdl,bdl20,bdk,bd20}
obtained various maximal function
characterizations of a new local-type Hardy
spaces associated with operators.
Besides, S. Yang and D. Yang \cite{yy19}
established atomic and maximal function
characterizations of Musielak--Orlicz--Hardy
spaces associated to non-negative
self-adjoint operators on spaces of
homogeneous type.

As a unified frame
of many
well-known classical concrete function spaces,
the study on Besov and Triebel--Lizorkin spaces
has a long history.
We refer the reader to monographs
\cite{bin,t83,t92,t06,w88} for a comprehensive
treatment of these function spaces and their
history on $\rn$. Also, Besov and
Triebel--Lizorkin spaces on spaces of
homogeneous type with some additional
assumptions were also studied; see, for instance,
\cite{hly99,hy03,y041,y051,y052}. Han et al.\
\cite{hmy08} and M\"uller and Yang \cite{my09}
introduced and studied
Besov and Triebel--Lizorkin spaces on RD-spaces.
Later, in \cite{yz11}, Yang and Zhou established a new
characterization of these Besov and Triebel--Lizorkin spaces.
Besides, Koskela et al. introduced the Haj\l
asz--Besov and Triebel--Lizorkin spaces on
RD-spaces in \cite{kyz10,kyz11}.
Later, Grafakos et al. \cite{glmy14} developed a
systematic theory on the multilinear analysis of
Besov and Triebel--Lizorkin spaces on RD-spaces.
Besides, in \cite{my09}, M\"uller and Yang also
introduced the spaces of Lipschitz type on RD-spaces
and discussed their relations with Besov and
Triebel--Lizorkin spaces in \cite{hmy08}.
As an application,
a difference characterization of those Besov
and Triebel--Lizorkin spaces was obtained.
On one hand, using the wavelet
reproducing formulae in \cite{hlw},
Han et al. \cite{hhhlp20} introduced Besov and
Triebel--Lizorkin spaces on
spaces of homogeneous type and established
embedding theorems.
On the other hand, Wang et al. \cite{whhy} also
introduced
Besov and Triebel--Lizorkin spaces on space
of homogenous type,
based on the Calder\'{o}n reproducing formulae established
in \cite{hlyy}, and established the boundedness
of Cader\'on--Zygmund operators on these spaces
as an application. Later, He et al. \cite{hwyy20}
obtained  characterizations of
Besov and Triebel--Lizorkin spaces via
wavelets, molecules, Lusin area functions,
and Littlewood--Paley $g_\lambda^\ast$-functions
and, moreover, He et al. also showed that two
kinds of Besov and Triebel--Lizorkin spaces
studied, respectively, in \cite{hhhlp20}
and \cite{whhy} coincide.

To complete the theory of Besov and Triebel--
Lizorkin spaces on spaces of homogenous type,
it is natural to ask whether or not
we can also establish
a difference characterization for Besov and
Triebel--Lizorkin spaces on space of homogenous type.
The main target of this article is to give an
affirmative answer to this question.

Precisely speaking, in this article, we introduce
spaces of Lipschitz type on spaces of
homogeneous type (see Definitions \ref{lipi}
and \ref{lipii} below),
and discuss their relations with Besov and
Triebel--Lizorkin spaces introduced
in \cite{whhy}. As an application,
we obtain a difference characterization of
Besov and Triebel--Lizorkin spaces (see
Theorems \ref{dc} and \ref{idc} below).

The organization of the remainder of
of this article is as follows.

In Section \ref{Scrf}, we make some
preliminaries via recalling the Calder\'on
reproducing formulae, the dyadic cube system,
and some basic properties of the spaces of
homogeneous type.

In Section \ref{s2}, we recall the notion of
Besov and Triebel--Lizorkin spaces
on spaces of homogeneous type introduced in
\cite{whhy}, and introduce the Lipschitz-type
spaces on spaces of homogenous type.

In Section \ref{s3}, we discuss the relations
between Lipschitz-type spaces
and homogeneous Besov and Triebel--Lizorkin
spaces, while the inhomogeneous counterparts
are given in Section \ref{s4}.

We point out that all the proofs get rid of the
dependence on
the reverse doubling assumption of the
considered underlying space by using the
Calder\'on reproducing formulae (see Lemmas
\ref{h_c_crf}, \ref{crf}, \ref{h_c_crf},  and
\ref{crf} below) in which the related approximation of the
identity includes the terms of exponential decay,
consisting of the side length of related dyadic
cubes and the distance between point and
dyadic reference points, so that one can fully
use the geometrical properties of $\cx$. To
deal with the case $p\le 1$ but near to $1$, we use the (local) lower bound of $\cx$,
which also reflects the geometrical properties of $\cx$. Moreover,
some results in the case $p\le 1$ but near to $1$ are new
even when the underlying space is an
RD-space [see Propositions \ref{btlinlip}(iv)
and \ref{ibtlinlip}(iv) below]. To obtain these results, we
introduce some new Lipschitz-type spaces
$\lipstu$ and $\ilipstu$ (see Definition \ref{lipii}
below), in which we add a parameter $u$ that enables us to use the
boundedness of the Hardy--Littlewood maximal
operator on $L^p(\cx)$ with $p\in(1,\infty]$.

Finally, let us make some conventions on notation.
Throughout this article, $A_0$ always denotes the
positive constant appearing in the \emph{quasi-triangle
inequality} of $d$ (see Definition \ref{10.21.1}),
the parameter $\omega$ means the \emph{upper dimension}
in Definition \ref{10.21.2} [see
\eqref{eq-doub}], and $\eta$ is defined to be the
smoothness index of the exp-ATI in
Definition \ref{10.23.2} below. Moreover, $
\delta$ is a small positive number, for instance,
$\delta\leq(2A_0)^{-10}$, coming from the
construction of the
dyadic cubes on $\cx$ (see Lemma \ref{10.22.1}).
For any $r,\ a,\ b\in \rr$, define
$r_+:=\max\{0,r\}$, $a\wedge b:=\min\{a,b\}$,
and $a\vee b :=\max\{a,b\}$.
The symbol $C$ denotes a positive constant
which is independent
of the main parameters involved, but may vary
from line to line. We use $C_{(\alpha,\beta,\dots)}$
to denote a positive constant depending on the indicated
parameters $\alpha,\ \beta,\ \dots$.
The symbol $A\lesssim B$ means that $A\leq
CB$ for some positive constant
$C$, while $A\sim B$ means $A\lesssim
B\lesssim A$.
If $f\le Cg$ and $g=h$ or $g\le h$, we then
write $f\ls g\sim h$ or $f\ls g\ls h$,
\emph{rather than} $f\ls g=h$ or $f\ls g\le h$.
For any $r\in(0,\infty)$ and $x,\ y\in\cx$ with
$x\neq y$, define $V(x,y):=\mu(B(x,d(x,y)))$
and $V_r(x):=\mu(B(x,r))$. For any  $\beta,\
\gamma\in(0,\eta)$ and $s\in (-
(\beta\wedge\gamma),\beta\wedge\gamma)$,
we let
\begin{equation}\label{pseta}
p(s,\beta\wedge\gamma):=\max
\left\{\frac{\omega}{\omega+
(\beta\wedge\gamma)},
\frac{\omega}{\omega+(\beta\wedge\gamma)
+s}\right\},
\end{equation}
where $\omega$ and $\eta$ are, respectively,
as in \eqref{eq-doub} and Definition \ref{10.23.2}.
The operator $M$ always denotes the \emph{central
Hardy--Littlewood maximal operator}
which is defined by setting,
for any locally integral function $f$ on $\cx$
and any $x\in\cx$,
\begin{equation}\label{m}
M(f)(x):=\sup_{r\in(0,\infty)}\frac{1}{\mu(B(x,r))}
\int_{B(x,r)}|f(y)|\,d\mu(y).
\end{equation}
Finally, for any set $E\subset\cx$, we use
$\mathbf 1_E$ to denote its characteristic
function and,
for any set $J$, we use $\#J$ to denote its
\emph{cardinality}. For any $p\in[1,\infty]$,
we use $p'$ to denote its conjugate index,
namely, $1/p + 1/p' = 1$.

\section{Preliminaries}\label{Scrf}

In this section, we make some preliminaries.
Let us begin with the notion of
Lebesgue spaces.

\begin{definition}
The \emph{Lebesgue space} $L^p(\cx)$ for any given
$p\in(0,\infty]$ is defined by setting, when $p\in(0,\infty)$,
$$
L^p(\cx):=\left\{f \ \text{is measurable on} \ \cx :\
\|f\|_{\lp}:=\left[\int_{\cx}|f(x)|^p\,d\mu(x)\right]^{1/p}<\infty\right\},
$$
and
$$
L^\infty(\cx):=\left\{f \ \text{is measurable on}
\ \cx :\  \|f\|_{L^\infty(\cx)}:=
\displaystyle{\esssup_{x\in\cx}|f(x)|<\infty}\right\}.
$$
For any given $p\in(0,\infty)$, the \emph{locally
$p$-integrable Lebesgue spaces} $L^p_{\loc}
(\cx)$ is defined by setting
\begin{align*}
L^p_{\loc}(\cx)&:=\Big\{f \ \text{is measurable on} \ \cx :\
\text{for any}\ x\in\cx, \text{there exists an}\\
&\qquad\qquad \  r\in(0,\infty)\ \text{such that}\ \|f\|_{L^p(B(x,r))}<\infty\Big\}.
\end{align*}
\end{definition}

Now, we recall the notions of test functions and
distributions, whose
following versions were originally given in
\cite{hmy08} (see also \cite{hmy06}).

\begin{definition}
Let $x_1\in\cx$, $r\in(0,\infty)$, $\beta \in
(0,1]$, and $\gamma \in (0,\infty)$. A
measurable function $f$ on $\cx$ is
called a \emph{test function of type $(x_1, r,
\beta, \gamma)$}, if there exists a positive
constant $C$ such that
\begin{enumerate}
\item[{\rm(i)}] for any $x\in\cx$,
\begin{equation}\label{12.5.9}
|f(x)|\leq C \frac{1}{V_r(x_1)+V(x_1,x)}
\left[\frac{r}{r+d(x_1,x)}\right]^\gamma;
\end{equation}
\item[{\rm(ii)}] for any $x,\ y \in \cx$ satisfying
$d(x, y)\leq(2A_0)^{-1}[r + d(x_1, x)]$,
\begin{equation}\label{12.5.10}
|f(x)-f(y)|\leq C\left[\frac{d(x,y)}{r+d(x_1,x)}
\right]^\beta
\frac{1}{V_r(x_1)+V(x_1,x)}\left[\frac{r}
{r+d(x_1,x)}\right]^\gamma.
\end{equation}
\end{enumerate}
The set of all test functions of type $(x_1, r,
\beta, \gamma)$ is denoted by $\cg(x_1, r, \beta,
\gamma)$. For any  $f\in \cg(x_1, r, \beta,
\gamma)$, its norm $\|f\|_{\cg(x_1, r, \beta,
\gamma)}$ is defined by setting
$$
\|f\|_{\cg(x_1, r, \beta, \gamma)}:=\inf
\{C\in(0,\infty):\, \text{\eqref{12.5.9} and \eqref{12.5.10} hold
true}\}.
$$
Its subspace $\mathring{\cg}(x_1, r, \beta,
\gamma)$ is defined by setting
$$
\mathring{\cg}(x_1, r, \beta, \gamma):=
\left\{f\in\cg(x_1, r, \beta, \gamma):\int_{\cx}
f(x)\,d\mu(x)=0\right\}
$$
equipped with the norm $\|\cdot\|
_{\mathring{\cg}(x_1, r, \beta, \gamma)}:=\|
\cdot\|_{\cg(x_1, r, \beta,
\gamma)}$.
Both $\cg(x_1, r, \beta, \gamma)$ and
$\mathring{\cg}(x_1, r, \beta, \gamma)$ are called
the \emph{spaces of test functions} on $\cx$.
\end{definition}

Note that, for any fixed $x_1,\ x_2\in\cx$ and
$r_1,\ r_2 \in(0,\infty)$,
$\cg(x_1,r_1,\beta,\gamma) =
\cg(x_2,r_2,\beta,\gamma)$
and $\mathring\cg(x_1,r_1,\beta,
\gamma)=\mathring\cg(x_2,r_2,\beta,\gamma)$
with equivalent norms, but the positive
equivalence  constants
may depend on $x_1$, $x_2$, $r_1$, and $r_2$.
Thus, for fixed $x_0\in\cx$ and
$r_0\in(0,\infty)$, we may denote $\cg(x_0,
r_0, \beta, \gamma)$ and
$\mathring{\cg}(x_0, r_0, \beta, \gamma)$
simply, respectively, by $\cg(\beta,\gamma)$
and $\mathring{\cg}(\beta,\gamma)$.

Fix $\epsilon\in (0, 1]$ and $\beta,\ \gamma
\in (0, \epsilon]$. Let $\cg^\epsilon_0(\beta,
\gamma)$ [resp.,
$\mathring{\cg}^\epsilon_0(\beta, \gamma)$]
be the completion of the set $\cg(\epsilon,
\epsilon)$ [resp.,
$\mathring{\cg}(\epsilon, \epsilon)$] in $
\cg(\beta,\gamma)$ [resp., $\mathring{\cg}
(\bz,\gz)$]. Furthermore,
the norm of $\cg^\epsilon_0(\beta,\gamma)$
[resp., $\mathring{\cg}^\epsilon_0(\beta,
\gamma)$] is defined
by setting $\|\cdot\|_{\cg^\epsilon_0(\beta,
\gamma)}:=\|\cdot\|_{\cg(\beta,\gamma)}$ [resp.,
$\|\cdot\|_{\mathring{\cg}^\epsilon_0(\beta,
\gamma)}:=\|\cdot\|_{\cg(\beta,\gamma)}$]. The dual space
$(\cg^\epsilon_0(\beta,\gamma))'$ [resp., $
(\mathring{\cg}^\epsilon_0(\beta,\gamma))'$]
is defined to be the
set of all continuous linear functionals from
$\cg^\epsilon_0(\beta,\gamma)$ [resp.,
$\mathring{\cg}^\epsilon_0(\beta,\gamma)$]
to $\mathbb{C}$, equipped with the weak-$
\ast$ topology. The
spaces $(\cg^\epsilon_0(\beta,\gamma))'$ and
$(\mathring{\cg}^\epsilon_0(\beta,\gamma))'$ are called the
\emph{spaces of distributions} on $\cx$.

The  following lemma, which comes from
\cite[Theorem 2.2]{hk}, establishes the dyadic cube system of
$(\cx,d,\mu)$.

\begin{lemma}\label{10.22.1}
Let constants $0 < c_0\leq C_0 <\infty$ and
$\delta\in(0, 1)$ be such that
$12A_0^3C_0\delta\leq c_0$.
Assume that a set of points, $\{ z_\alpha^k : k
\in\zz , \alpha \in \ca_k \} \subset \cx$ with $\ca_k$ for
any $k \in\zz$ being a set of indices, has the
following properties: for any $k \in\zz$,
$$d(z^k_\alpha, z^k_\beta)\geq
c_0\delta^k\quad\text{if}
\quad\alpha\neq\beta,\quad
\text{and}\quad\min_{\alpha\in\ca_k}
d(x,z_\alpha^k)<C_0\delta^k\quad\text{for any}
\quad x\in\cx.$$
Then there exists a family of sets,
$\{ Q_\alpha^k : k \in\zz , \alpha \in \ca_k \}$, satisfying
\begin{enumerate}
\item[{\rm(i)}] for any $k\in\zz$,
$\bigcup_{\alpha\in\ca_k}
Q_\alpha^k=\cx$ and $\{ Q_\alpha^k : \alpha
\in \ca_k \}$ is disjoint;
\item[{\rm(ii)}] if $l,\ k\in\zz$ and $k\leq l$,
then, for any $\alpha\in\ca_k$ and $\beta\in\ca_l$,
either $Q_\beta^l\subset Q_\alpha^k$ or
$Q_\beta^l\cap Q_\alpha^k=\emptyset$;
\item[{\rm(iii)}] for any $k\in\zz$ and $
\alpha\in\ca_k$,
$B(z^k_\alpha, (3A_0^2)^{-1}c_0\delta^k)
\subset Q_\alpha^k\subset B(z^k_\alpha,
2A_0C_0\delta^k)$.
\end{enumerate}
\end{lemma}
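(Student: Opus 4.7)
The plan is to follow the dyadic-cube construction of Christ, as refined for general quasi-metric spaces by Hyt\"onen and Kairema, i.e., the argument behind the cited reference \cite{hk}. The strategy has two stages: first build auxiliary Voronoi-type cells at each level using nearest-reference-point assignment, and then upgrade them to a nested family via a parent map on the reference points.

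For the first stage, equip each $\ca_k$ with a total order and, for each $x\in\cx$, let $\alpha(k,x)$ be the smallest index minimizing $d(x,z_\alpha^k)$. The preliminary cells $\widetilde Q_\alpha^k:=\{x\in\cx:\alpha(k,x)=\alpha\}$ then partition $\cx$ at each level $k$, and the separation and covering hypotheses on $\{z_\alpha^k\}$ yield
\[
B\left(z_\alpha^k,\frac{c_0}{2A_0}\delta^k\right)\subset \widetilde Q_\alpha^k\subset B(z_\alpha^k,C_0\delta^k).
\]
For the second stage, define a parent map $\pi_k:\ca_{k+1}\to\ca_k$ by letting $\pi_k(\beta)$ be the unique $\alpha\in\ca_k$ with $z_\beta^{k+1}\in\widetilde Q_\alpha^k$; iterating produces ancestors $\pi_k^{l-k}:\ca_l\to\ca_k$ for every $l\ge k$. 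A standard recursive/limit procedure then assembles $Q_\alpha^k$ as (essentially) the union of the Voronoi cells of the descendants of $\alpha$ at finer levels, with a consistency correction that reassigns the boundary points whose Voronoi ancestry is inconsistent, so that the resulting family forms a nested partition.

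Properties (i) and (ii) are then essentially immediate from this construction: the partition property at each level comes from the Voronoi cells together with the disjointness of the tree branches, and the nesting across levels follows because different ancestors produce disjoint descendant unions. The main obstacle is property (iii). For the outer inclusion $Q_\alpha^k\subset B(z_\alpha^k,2A_0C_0\delta^k)$, one iterates the quasi-triangle inequality along an ancestor chain: if $x\in\widetilde Q_\beta^l$ with $\pi_k^{l-k}(\beta)=\alpha$, then telescoping through the intermediate reference points gives $d(x,z_\alpha^k)$ bounded by a geometric series with ratio $A_0\delta$, and the hypothesis $12A_0^3C_0\delta\le c_0\le C_0$ ensures $A_0\delta\le 1/12$, so the series stays well inside $2A_0C_0\delta^k$. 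The inner inclusion $B(z_\alpha^k,(3A_0^2)^{-1}c_0\delta^k)\subset Q_\alpha^k$ is the more delicate step: one shows inductively on $l\ge k$ that if $x$ lies in this small ball then its Voronoi ancestor at every finer level remains a descendant of $\alpha$, using the separation hypothesis to pin down the level-$k$ assignment of $x$ (since competing level-$k$ reference points lie at distance at least $c_0\delta^k$) and the precise constant constraint $12A_0^3C_0\delta\le c_0$ to prevent the descendant chain from drifting far enough that $x$ ever becomes closer to a competing reference point at some finer level. This balancing of constants is exactly where the hypothesis on $\delta$ enters the proof.
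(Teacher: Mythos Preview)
Your proposal is correct and follows exactly the approach the paper relies on: the paper does not give its own proof of this lemma but simply cites \cite[Theorem 2.2]{hk}, and what you have outlined is precisely the Hyt\"onen--Kairema construction from that reference. Your sketch of the Voronoi cells, the parent map, and the balancing of constants for the inner and outer ball inclusions accurately captures the argument.
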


Throughout this article, for any $k\in\zz$, define
$$
\cg_k:=\ca_{k+1}\setminus\ca_k\quad
\text{and}\quad\cy^k:=\left\{z^{k+1}
_\alpha\right\}_{\alpha\in\cg_k}
=: \left\{y_\alpha^k\right\}_{\alpha\in\cg_k}
$$
and, for any $x \in \cx$, define
$$
d(x,\cy^k):=\inf_{y\in\cy^k}d(x,y)
\quad\text{and}\quad V_{\delta^k}(x):=\mu(B(x,
\delta^k)).
$$

Now, we recall the notion of approximations of
the identity with exponential decay from \cite{hlyy}.

\begin{definition}\label{10.23.2}
A sequence $\{Q_k\}_{k\in\zz}$ of bounded
linear integral operators on $L^2(\cx)$ is called an
\emph{approximation of the identity with
exponential decay} (for short, exp-ATI) if there exist constants $C$,
$\nu\in(0, \infty)$, $a\in(0, 1]$, and $
\eta\in(0,1)$ such that,
for any $k \in\zz$, the kernel of the operator $Q_k$, a
function on $\cx \times \cx$ , which is still
denoted by $Q_k$,
satisfies the following conditions:
\begin{enumerate}
\item[{\rm(i)}] (the \emph{identity condition}) $
\sum_{k=-\infty}^\infty Q_k=I$
in $L^2(\cx)$, where $I$ denotes the
\emph{identity operator} on $L^2(\cx)$;
\item[{\rm(ii)}] (the \emph{size condition}) for
any $x,\ y\in\cx$,
\begin{align*}
|Q_k(x,y)|\leq C \frac{1}{\sqrt{V_{\delta^k}
(x)V_{\delta^k}(y)}}
\exp\left\{-\nu\left[\frac{d(x,y)}{\delta^k}
\right]^a\right\}h_k(x,y),
\end{align*}
here and thereafter,
$$h_k(x,y):=\exp\left\{-\nu\left[\frac{\max
\{d(x,\cy^k),d(y,\cy^k)\}}{\delta^k}
\right]^a\right\};$$
\item[{\rm(iii)}] (the \emph{regularity
condition}) for any
$x,\ x',\ y\in\cx$ with $d(x, x')\leq\delta^k$,
\begin{align*}
&|Q_k(x,y)-Q_k(x',y)|+|Q_k(y,x)-Q_k(y,x')|\\
&\quad\leq C\left[\frac{d(x,x')}{\delta^k}
\right]^\eta
\frac{1}{\sqrt{V_{\delta^k}(x)V_{\delta^k}(y)}}
\exp\left\{-\nu\left[\frac{d(x,y)}{\delta^k}
\right]^a\right\}
h_k(x,y);
\end{align*}
\item[{\rm(iv)}] (the \emph{second difference
regularity condition}) for any
$x,\ x',\ y,\ y'\in\cx$ with $d(x, x')
\leq\delta^k$ and $d(y, y')\leq\delta^k$,
\begin{align*}
&|[Q_k(x,y)-Q_k(x',y)]-[Q_k(x,y')-Q_k(x',y')]|\\
&\quad\leq C\left[\frac{d(x,x')}{\delta^k}
\right]^\eta\left[\frac{d(y,y')}{\delta^k}\right]^\eta
\frac{1}{\sqrt{V_{\delta^k}(x)V_{\delta^k}(y)}}
\exp\left\{-\nu\left[\frac{d(x,y)}{\delta^k}\right]^a\right\}
h_k(x,y);
\end{align*}
\item[{\rm(v)}] (the \emph{cancellation
condition}) for any $x,\ y \in \cx$,
$$\int_\cx Q_k(x,y')\,d\mu(y')=0=\int_\cx
Q_k(x',y)\,d\mu(x').$$
\end{enumerate}
\end{definition}

The existence of such an exp-ATI on spaces of homogeneous type
is guaranteed by \cite[Theorem 7.1]{ah13}, due to
$\mu(\cx)=\infty$, with $\eta$ the same as
in \cite[Theorem 3.1]{ah13}
which might be very small (see also
\cite[Remark 2.8(i)]{hlyy}).
However, if $d$ is a metric,
then $\eta$ can be taken arbitrarily close to 1
(see \cite[Corollary 6.13]{ht14}).

The following lemma states some basic
properties of exp-ATIs.
One can find more details in \cite[Remarks 2.8
and 2.9, and
Proposition 2.10]{hlyy}. In what follows, for
any $\gamma\in(0,\infty)$, $k\in\zz$, and
$x,\ y\in\cx$, let
\begin{equation}\label{rxy}
R_\gamma(x,y;k):= \frac{1}{V_{\delta^k}(x)+V(x,y)}
\left[\frac{\delta^k}{\delta^k+d(x,y)}
\right]^\gamma.
\end{equation}

\begin{lemma}
Let $\{Q_k\}_{k\in\zz}$ be an {\rm exp-ATI}
and $\eta\in(0,1)$ as in Definition \ref{10.23.2}.
Then, for any given $\Gamma \in (0,\infty)$,
there exists a positive constant $C$ such that,
for any $k\in\zz$, the
kernel $Q_k$ has the following properties:
\begin{enumerate}
\item[{\rm(i)}] for any $x,\ y \in\cx$,
\begin{equation}\label{10.23.3}
|Q_k(x,y)|\leq CR_\Gamma(x,y;k);
\end{equation}
\item[{\rm(ii)}] for any $x,\ x',\ y\in\cx$ with
$d(x,x')\leq(2A_0)^{-1}[\delta^k+d(x,y)]$,
\begin{align*}
|Q_k(x,y)-Q_k(x',y)|+|Q_k(y,x)-Q_k(y,x')|\leq C\left[\frac{d(x,x')}{\delta^k+d(x,y)}
\right]^\eta R_\Gamma(x,y;k);
\end{align*}
\item[{\rm(iii)}] for any $x,\ x',\ y,\ y' \in \cx$
with $d(x,x')\leq(2A_0)^{-2}[\delta^k +d(x, y)]$ and
$d(y, y')\leq(2A_0)^{-2}[\delta^k +d(x, y)]$,
\begin{align*}
&|[Q_k(x,y)-Q_k(x',y)]-[Q_k(x,y')-Q_k(x',y')]|\\
&\quad\leq C\left[\frac{d(x,x')}{\delta^k+d(x,y)}
\right]^\eta\left[\frac{d(y,y')}{\delta^k+d(x,y)}\right]^\eta
R_\Gamma(x,y;k).
\end{align*}
\end{enumerate}
\end{lemma}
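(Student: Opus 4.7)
The plan is to convert the exponential Gaussian-type decay of the exp-ATI kernel into the polynomial tail $R_\Gamma(x,y;k)$, and then bootstrap the converted size bound to obtain the weaker-hypothesis regularity estimates in (ii) and (iii). The two elementary tools are, first, the inequality that, for any $N\in(0,\infty)$ and $a\in(0,1]$, $\exp\{-\nu s^a\}\le C_{(N,a,\nu)}(1+s)^{-N}$ for all $s\ge 0$, which trades exponential decay for any prescribed polynomial decay at the cost of a harmless constant, and second, the doubling property: writing $t:=d(x,y)/\delta^k$, one obtains from \eqref{eq-doub} and the quasi-triangle inequality that
\[
V_{\delta^k}(y)\le C(1+t)^\omega V_{\delta^k}(x)\quad\text{and}\quad V_{\delta^k}(x)+V(x,y)\le C(1+t)^\omega V_{\delta^k}(x).
\]

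For (i), inserting the first inequality into the size condition of Definition \ref{10.23.2}(ii) and bounding $h_k\le 1$ yields $|Q_k(x,y)|\ls(1+t)^{\omega/2}\exp\{-\nu t^a\}/V_{\delta^k}(x)$. Choosing $N=\Gamma+3\omega/2$ in the elementary inequality absorbs the polynomial prefactor $(1+t)^{\omega/2}$ and produces the extra $(1+t)^\omega$ needed to rewrite $V_{\delta^k}(x)^{-1}$ in terms of $[V_{\delta^k}(x)+V(x,y)]^{-1}$ via the second doubling bound, giving $|Q_k(x,y)|\ls R_\Gamma(x,y;k)$.

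For (ii) I split on whether $d(x,x')\le\delta^k$ or not. In the first case, Definition \ref{10.23.2}(iii) applies directly, and I use the factorization
\[
\left[\frac{d(x,x')}{\delta^k}\right]^\eta=\left[\frac{d(x,x')}{\delta^k+d(x,y)}\right]^\eta(1+t)^\eta,
\]
so that the extra $(1+t)^\eta$ is absorbed by the exponential decay exactly as in (i). In the second case, where $d(x,x')>\delta^k$, I bound $|Q_k(x,y)-Q_k(x',y)|\le|Q_k(x,y)|+|Q_k(x',y)|$ by applying (i) with parameter $\Gamma+\eta$; since $R_{\Gamma+\eta}(x,y;k)=[\delta^k/(\delta^k+d(x,y))]^\eta R_\Gamma(x,y;k)$ and $\delta^k<d(x,x')$, the required factor $[d(x,x')/(\delta^k+d(x,y))]^\eta$ appears automatically. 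The comparability $R_\Gamma(x',y;k)\sim R_\Gamma(x,y;k)$ under the hypothesis $d(x,x')\le(2A_0)^{-1}[\delta^k+d(x,y)]$, which follows from the quasi-triangle inequality together with doubling, handles the swap between $x$ and $x'$ in the $|Q_k(x',y)|$ term. The bound for $|Q_k(y,x)-Q_k(y,x')|$ is symmetric.

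For (iii) I run the same dichotomy on both $d(x,x')$ and $d(y,y')$, giving four cases. When both distances are $\le\delta^k$, I apply the second-difference regularity of Definition \ref{10.23.2}(iv) and use the factorization above in both arguments simultaneously. When exactly one is $>\delta^k$, I reduce by the triangle inequality
\[
|[Q_k(x,y)-Q_k(x',y)]-[Q_k(x,y')-Q_k(x',y')]|\le|Q_k(x,y)-Q_k(x',y)|+|Q_k(x,y')-Q_k(x',y')|
\]
to part (ii) and insert the remaining $\eta$-power as in the second case of (ii). When both are $>\delta^k$, I apply part (i) with parameter $\Gamma+2\eta$ and peel off two factors of $[\delta^k/(\delta^k+d(x,y))]^\eta$. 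The main technical obstacle I expect is the bookkeeping in the mixed cases of (iii): one must verify that replacing $R_\Gamma(x',y;k)$ or $R_\Gamma(x,y';k)$ by $R_\Gamma(x,y;k)$ does not destroy the extracted $\eta$-powers; this is handled uniformly by always starting from $\Gamma+2\eta$ in the initial bound and peeling off factors one at a time, each peel being dominated by the corresponding $[d(x,x')/(\delta^k+d(x,y))]^\eta$ or $[d(y,y')/(\delta^k+d(x,y))]^\eta$ precisely in the cases where the relevant distance exceeds $\delta^k$.
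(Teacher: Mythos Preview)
Your approach is correct and is exactly the standard derivation of these estimates; the paper itself does not supply a proof but simply cites \cite[Remarks 2.8 and 2.9, and Proposition 2.10]{hlyy}, where the argument proceeds along the same lines you outline (exponential-to-polynomial conversion plus doubling for (i), and the $d(x,x')\le\delta^k$ versus $d(x,x')>\delta^k$ dichotomy for (ii) and (iii)).
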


Using the above exp-ATI, He et al.
\cite[Theorem 4.18]{hlyy} established
the following homogeneous continuous/
discrete Calder\'on reproducing formulae.
We point out that these homogeneous
reproducing formulae need the assumption that $\mu(\cx)=\fz$.

\begin{lemma}\label{h_c_crf}
Let $\beta,\ \gamma \in (0, \eta)$ with $\eta$
as in Definition \ref{10.23.2}, and $\{Q_k\}
_{k=-\infty}^\infty$ be an {\rm exp-ATI}.
Then there exists a sequence $\{\widetilde{Q}
_k\}_{k=-\infty}^\infty$
of bounded linear integral operators on
$L^2(\mathcal{X})$ such that,
for any $f \in (\cgg)'$,
$$f= \sum_{k=-\infty}^\infty\widetilde{Q}
_kQ_kf \qquad \text{in}\quad (\cgg)',$$
and, moreover, there exists a positive constant
$C$ such that, for any $k\in\zz$, the kernel of
$\widetilde{Q}_k$, still denoted by
$\widetilde{Q}_k$, satisfies that
\begin{enumerate}
\item[{\rm(i)}] for any $x,\ y \in\cx$,
\begin{equation}\label{4.23x}
\left|\widetilde{Q}_k(x,y)\right|\leq CR_\gamma(x,y;k);
\end{equation}
\item[{\rm(ii)}] for any $x,\ x',\ y\in\cx$ with
$d(x,x')\leq(2A_0)^{-1}[\delta^k+d(x,y)]$,
\begin{align}\label{4.23y}
&\left|\widetilde{Q}_k(x,y)-\widetilde{Q}_k(x',y)
\right|\leq C\left[\frac{d(x,x')}{\delta^k+d(x,y)}\right]^\beta
R_\gamma(x,y;k);
\end{align}
\item[{\rm(iii)}]for any $x\in\mathcal{X}$,
\begin{equation}\label{4.23a}
\int_{\mathcal{X}}\widetilde{Q}_k(x,y)
\,d\mu(y)=0=
\int_{\mathcal{X}}\widetilde{Q}_k(y,x)\,d\mu(y),
\end{equation}
where $R_\gamma(x,y;k)$ is as in \eqref{rxy}.
\end{enumerate}
\end{lemma}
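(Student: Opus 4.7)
The plan is to follow Coifman's classical decomposition scheme, adapted to the setting without reverse doubling. Using the identity $\sum_{k\in\zz}Q_k=I$ on $L^2(\cx)$, I would write, for a large integer $N$ to be chosen later,
$$
I=\sum_{j\in\zz}\sum_{k\in\zz}Q_jQ_k=T_N+R_N,
$$
where, after the re-indexing $j=k+l$,
$$
T_N:=\sum_{k\in\zz}D_k^NQ_k,\quad D_k^N:=\sum_{|l|\le N}Q_{k+l},\quad R_N:=\sum_{k\in\zz}\sum_{|l|>N}Q_{k+l}Q_k.
$$
The desired reproducing formula is then obtained by setting $\widetilde{Q}_k:=T_N^{-1}D_k^N$, provided that $T_N=I-R_N$ is invertible and its inverse acts boundedly on the test-function spaces.

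The crucial technical step is an almost-orthogonality estimate for the composition $Q_{k+l}Q_k$. Exploiting the cancellation condition (v) of the exp-ATI on one factor together with the regularity condition (iii) on the other, and keeping the exponential-decay factor and the auxiliary weight $h_k$ from Definition \ref{10.23.2} throughout, I would prove a kernel bound of the form
$$
|Q_{k+l}Q_k(x,y)|\ls\delta^{|l|\eta}R_\gamma(x,y;k\wedge(k+l)),
$$
together with an analogous first-order Hölder bound in $x$ exhibiting the same gain $\delta^{|l|\eta}$. Summing over $|l|>N$ and using that the reproducing kernels $R_\gamma(\cdot,\cdot;k)$ are stable under summation over consecutive scales \emph{via the doubling property \eqref{eq-doub} alone}, I would obtain
$$
\|R_N\phi\|_{\cg(\beta,\gamma)}\ls\delta^{N\eta'}\|\phi\|_{\cg(\beta,\gamma)}
$$
for every $\phi\in\cgg$ and some $\eta'>0$ depending on $\beta,\gamma$. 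Choosing $N$ so large that this operator norm is strictly less than $1$, the Neumann series $T_N^{-1}=\sum_{m=0}^\infty R_N^m$ converges as a bounded operator on $\cgg$, hence, by duality, on $(\cgg)'$.

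With $T_N^{-1}$ in hand, define $\widetilde{Q}_k:=T_N^{-1}D_k^N$; then $\sum_k\widetilde{Q}_kQ_k=T_N^{-1}T_N=I$ in $(\cgg)'$. To verify \eqref{4.23x}--\eqref{4.23a}, I would view, for each fixed $y\in\cx$, the function $x\mapsto D_k^N(x,y)$ as an element of $\cg(y,\delta^k,\beta,\gamma)$ with norm controlled by $[V_{\delta^k}(y)]^{-1}$; the boundedness of $T_N^{-1}$ on this space (built from the same exp-ATI kernels) then transfers the size bound \eqref{4.23x} and the Hölder bound \eqref{4.23y} to $\widetilde{Q}_k$. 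The cancellation \eqref{4.23a} is inherited in the $y$-variable directly from each $Q_{k+l}$'s cancellation (v), and in the $x$-variable from the fact that $T_N^{-1}$ is a Neumann series in $R_N=\sum_k\sum_{|l|>N}Q_{k+l}Q_k$, each of whose summands annihilates constants on the left by condition (v).

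The main obstacle is producing the almost-orthogonality estimate in the non-reverse-doubling setting: without the lower bound $\mu(B(x,r))\gtrsim r^\kappa$ one cannot rely on the measure estimates that underlie the corresponding proofs of \cite{hmy08,my09} on RD-spaces. The remedy, following the philosophy stressed in the introduction, is to never discard the exponential factor $\exp\{-\nu[d(x,y)/\delta^k]^a\}$ nor the dyadic-reference-point factor $h_k$ from Definition \ref{10.23.2} during the convolution-type calculation of $Q_{k+l}Q_k$; these factors encode enough geometric information to substitute for reverse doubling when one integrates against $1/[V_{\delta^k}(x)+V(x,y)]$.
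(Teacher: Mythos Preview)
The paper does not prove this lemma itself; it is quoted from \cite[Theorem~4.18]{hlyy}, and your sketch is essentially the argument carried out there: Coifman's remainder scheme $I=T_N+R_N$, almost-orthogonality of $Q_{k+l}Q_k$ with gain $\delta^{|l|\eta}$ obtained by pairing cancellation (v) against regularity (iii), and inversion of $T_N$ by Neumann series once $R_N$ is a contraction on the spaces of test functions. Your emphasis on retaining the exponential factor and the dyadic-reference-point weight $h_k$ throughout the almost-orthogonality calculation, as a substitute for reverse doubling, is exactly the mechanism exploited in \cite{hlyy}.

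One point deserves sharpening. For the convergence $f=\sum_k\widetilde{Q}_kQ_kf$ in $(\cgg)'$ it suffices, as you say, to bound $R_N$ on the single space $\cgg$. But for the kernel estimates \eqref{4.23x}--\eqref{4.23y} you need the stronger fact that $R_N$ (hence $T_N^{-1}$) is bounded on $\cg(x_1,r,\beta,\gamma)$ with operator norm $\ls\delta^{N\eta'}$ \emph{uniformly in $x_1\in\cx$ and $r\in(0,\infty)$}, so that applying $T_N^{-1}$ to $D_k^N(\cdot,y)\in\cg(y,\delta^k,\beta,\gamma)$ returns an element of the same space with norm $\ls[V_{\delta^k}(y)]^{-1}$ independently of $k$ and $y$. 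You allude to this in your penultimate paragraph, but the two statements should not be conflated: the uniform bound is the technical core of \cite[Section~4]{hlyy} and is not a consequence of the fixed-space bound (the spaces $\cg(x_1,r,\beta,\gamma)$ for different $(x_1,r)$ coincide as sets but with equivalence constants that blow up). Your argument for the cancellation \eqref{4.23a} is correct once one observes that every term $R_N^m D_k^N(\cdot,y)$ in the Neumann expansion has vanishing integral in $x$, since the outermost factor $Q_{j+l}$ in each summand of $R_N$ already does.
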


To recall the homogeneous discrete Calder\'on
reproducing formula obtained in
\cite[Theorem 5.11]{hlyy}, we need  more notions.
Let $j_0\in\nn$ be sufficiently large such that
$\delta^{j_0}\leq (2A_0)^{-3}C_0$. Based on
Lemma \ref{10.22.1}, for any $k\in\zz$ and
$\alpha\in\ca_k$, let
$$
\cn(k,\alpha):=\{\tau\in\ca_{k+j_0}:
\  Q_\tau^{k+j_0}\subset Q_\alpha^k\}
$$
and $N(k,\alpha):=\#\cn(k,\alpha)$. From
Lemma \ref{10.22.1},
it follows that there exists a positive constant
$C$, independent of $k$ and $\alpha$, such that
$N(k,\alpha) \leq C \delta^{-j_0\omega}$ and
$\bigcup_{\tau\in\cn(k,\alpha)}
Q_\tau^{k+j_0}= Q_\alpha^k$. We
rearrange the set $\{Q_\tau^{k+j_0}:
\tau\in\cn(k,\alpha)\}$ as $\{\qa\}_{m=1}^{N(k,
\alpha)}$. Denote by
$\ya$ an arbitrary point in $\qa$, and
$z_\alpha^{k,m}$ the ``center" of $\qa$.

\begin{lemma}\label{crf}
Let $\{Q_k\}_{k=-\infty}^\infty$ be an {\rm exp-ATI}
and $\beta,\ \gamma \in (0, \eta)$ with
$\eta$ as in Definition \ref{10.23.2}.
For any $k\in\zz$, $\alpha\in\ca_k$, and
$m\in\{1,\dots,N(k,\alpha)\}$,
suppose that $\ya$ is an arbitrary point in $\qa$.
Then there exists a sequence $\{\widetilde{Q}
_k\}_{k=-\infty}^\infty$ of
bounded linear integral operators on
$L^2(\mathcal{X})$ such that,
for any $f \in (\cgg)'$,
$$f(\cdot) = \sum_{k=-\infty}
^\infty\sum_{\alpha \in \ca_k}
\sum_{m=1}^{N(k,\alpha)}\mu\left(\qa\right)
\widetilde{Q}_k(\cdot,\ya)Q_kf
\left(\ya\right) \qquad \text{in}\quad (\cgg)'.$$
Moreover, there exists a positive constant $C$,
independent of
$\ya$ with $k\in\zz,\ \alpha\in\ca_k$, and
$m\in\{1,\dots,N(k,\alpha)\}$,
and $f$, such that,
for any $k\in\zz$,
the kernel of $\widetilde{Q}_k$
satisfies \eqref{4.23x}, \eqref{4.23y}, and \eqref{4.23a}.
\end{lemma}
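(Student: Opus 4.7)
The plan is to discretize the continuous reproducing formula from Lemma~\ref{h_c_crf}. Writing
$$\widetilde{Q}_k Q_k f(x)=\sum_{\alpha\in\ca_k}\sum_{m=1}^{N(k,\alpha)}\int_{\qa}\widetilde{Q}_k(x,y)\,Q_k f(y)\,d\mu(y)$$
and replacing each inner integral by the one-point Riemann sum $\mu(\qa)\,\widetilde{Q}_k(x,\ya)\,Q_k f(\ya)$, I define
$$Tf(x):=\sum_{k\in\zz}\sum_{\alpha\in\ca_k}\sum_{m=1}^{N(k,\alpha)}\mu(\qa)\,\widetilde{Q}_k(x,\ya)\,Q_k f(\ya)\quad\text{and}\quad Rf:=f-Tf.$$
The strategy is to show that $R$ has operator norm strictly less than $1$ on an appropriate space of test functions (and, by transposition, on its dual), so that $T=I-R$ is invertible with $T^{-1}=\sum_{j\ge 0}R^j$. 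Absorbing $T^{-1}$ into the first argument of each $\widetilde{Q}_k$ produces new kernels $\widetilde{Q}_k^{\mathrm{new}}(\cdot,\ya):=[T^{-1}\widetilde{Q}_k(\cdot,\ya)](\cdot)$, and the identity $f=T^{-1}(Tf)$ expanded term-by-term is the desired formula.

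The key step is to bound $\|R\|$ on $\mathring{\mathcal{G}}_0^\eta(\beta',\gamma')$ for some $\beta'\in(\beta,\eta)$ and $\gamma'\in(\gamma,\eta)$. For any $y\in\qa$, Lemma~\ref{10.22.1}(iii) gives $d(y,\ya)\le 2A_0 C_0\delta^{k+j_0}$, which, for $j_0$ large enough, is at most $(2A_0)^{-1}[\delta^k+d(x,y)]$; the $\eta$-H\"older regularity of $\widetilde{Q}_k$ in its second argument together with the analogous regularity of $Q_k$ then bound each single-scale error pointwise by $\delta^{j_0\eta}$ times an expression of the form $R_{\gamma'}(x,\ya;k)R_{\gamma'}(\ya,\cdot;k)$. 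Summing over $k$, $\alpha$, and $m$ via the standard almost-orthogonality inequality (of the type used in the proof of Lemma~\ref{h_c_crf}, namely $\int_{\cx}R_{\gamma'}(x,z;k)R_{\gamma'}(z,y;l)\,d\mu(z)\ls\delta^{|k-l|\gamma'}R_{\gamma'}(x,y;k\wedge l)$) yields $\|R\|\ls\delta^{j_0\eta}$, which becomes smaller than $1$ once $j_0$ is chosen large enough.

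Once this inversion is available, the new kernels $\widetilde{Q}_k^{\mathrm{new}}(\cdot,\ya)$ inherit the size bound \eqref{4.23x} and the regularity \eqref{4.23y} because each $\widetilde{Q}_k(\cdot,\ya)$ is a smooth molecule at scale $\delta^k$ centered near $\ya$, and this molecular class is preserved by $T^{-1}$ via the same almost-orthogonality argument that controls $R$. The cancellation \eqref{4.23a} survives because every iterate $R^j$ sends mean-zero functions to mean-zero functions, owing to the vanishing moments of the $\widetilde{Q}_k$ and $Q_k$; hence $T^{-1}\widetilde{Q}_k(\cdot,\ya)$ retains the vanishing moment of $\widetilde{Q}_k(\cdot,\ya)$. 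Pairing both sides of $f=T^{-1}(Tf)$ with an arbitrary $\varphi\in\mathring{\mathcal{G}}_0^\eta(\beta,\gamma)$ then delivers the reproducing identity in the weak-$\ast$ sense on $(\mathring{\mathcal{G}}_0^\eta(\beta,\gamma))'$.

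The main obstacle is producing the quantitative bound $\|R\|\ls\delta^{j_0\eta}$ without invoking any reverse doubling on $\mu$. The resolution lies in exploiting the full exp-ATI structure of Definition~\ref{10.23.2}: in particular, the factor $h_k(x,y)$ built from the dyadic reference points $\cy^k$, paired with the geometric data of Lemma~\ref{10.22.1}, controls $\mu(\qa)$ relative to $V_{\delta^k}(\ya)$ with enough uniformity to carry the almost-orthogonality estimate through. This is precisely the feature that allows the continuous formula of Lemma~\ref{h_c_crf} to bypass reverse doubling in \cite{hlyy}, and the same mechanism transfers to the discrete setting here.
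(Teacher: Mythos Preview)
The paper does not prove this lemma; it is quoted from \cite[Theorem~5.11]{hlyy} and stated without argument. So there is no in-paper proof to compare against, only the construction in \cite{hlyy}. Your overall strategy---discretize, show the remainder $R=I-T$ has small operator norm, and invert via a Neumann series---is indeed the template used there.

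However, your implementation has a genuine gap. You propose to discretize the \emph{already constructed} continuous formula of Lemma~\ref{h_c_crf}, and for the single-cube error
\[
\int_{\qa}\bigl[\widetilde{Q}_k(x,y)\,Q_kf(y)-\widetilde{Q}_k(x,\ya)\,Q_kf(\ya)\bigr]\,d\mu(y)
\]
you invoke ``the $\eta$-H\"older regularity of $\widetilde{Q}_k$ in its second argument.'' But Lemma~\ref{h_c_crf} supplies only \eqref{4.23y}, which is regularity of $\widetilde{Q}_k(\cdot,y)$ in the \emph{first} variable; no smoothness in the second variable is asserted for these $\widetilde{Q}_k$. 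Without it, the term $[\widetilde{Q}_k(x,y)-\widetilde{Q}_k(x,\ya)]\,Q_kf(y)$ cannot be shown to gain the factor $\delta^{j_0\eta}$, and the bound $\|R\|\lesssim\delta^{j_0\eta}$ does not follow from your argument.

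The construction in \cite{hlyy} avoids this by not discretizing Lemma~\ref{h_c_crf} at all. Instead, one builds the discrete approximate identity directly from the exp-ATI $\{Q_k\}$ (which \emph{does} enjoy full two-sided regularity and the second-difference estimate of Definition~\ref{10.23.2}(iii)--(iv)), proves the resulting remainder is small on $\mathring{\cg}_0^\eta(\beta',\gamma')$ using that structure together with the $h_k$ factor and the dyadic geometry of Lemma~\ref{10.22.1}, and only then defines the $\widetilde{Q}_k$ of the present lemma as the outcome of the Neumann inversion. If you want to repair your sketch, the cleanest fix is to carry out the discretization at the level of $\sum_k Q_k=I$ (or $\sum_k Q_k^*Q_k$) rather than at the level of $\sum_k\widetilde{Q}_kQ_k$.
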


Now, we recall the inhomogeneous approximation
of the identity with exponential decay (see
\cite[Definition 6.1]{hlyy}),
which is the basis of inhomogeneous Calder\'on
reproducing formulae.

\begin{definition}\label{iati}
Let $\eta\in(0,1)$ be as in Definition \ref{10.23.2}.
A sequence $\{Q_k\}_{k=0}^\infty$ of bounded
linear integral operators on
$L^2(\cx)$ is called an  \emph{inhomogeneous
approximation
of the identity with exponential decay} (for
short, exp-IATI) if $\{Q_k\}_{k=0}^\infty$ has
the following properties:
\begin{enumerate}
\item[{\rm(i)}] $\sum_{k=0}^\infty Q_k=I$ in
$L^2(\cx)$;
\item[{\rm(ii)}] for any $k\in\nn$, $Q_k$
satisfies (ii) through (v) of Definition \ref{10.23.2};
\item[{\rm(iii)}] $Q_0$ satisfies (ii), (iii), and (iv)
of Definition \ref{10.23.2} with $k=0$ but
without the term
$$\exp\left\{-\nu\left[\max\left\{d(x,
\cy^0),d(y,\cy^0)\right\}\right]^a\right\};$$
moreover, for any $x\in\cx$,
$$\int_\cx Q_0(x,y)\,d\mu(y)=1=\int_\cx
Q_0(y,x)\,d\mu(y).$$
\end{enumerate}
\end{definition}

Note that the existence of such an exp-IATI
does not need the assumption that
$\mu(\cx)=\infty$.

Next, we state the inhomogeneous continuous
and  discrete Calder\'on reproducing formulae
which were obtained, respectively, in
\cite[Theorem 6.8]{hlyy} and \cite[Theorem 6.13]{hlyy}.

\begin{lemma}\label{ih_c_crf}
Let $\beta,\ \gamma \in (0, \eta)$ with
$\eta$ as in Definition \ref{10.23.2},
and $\{Q_k\}_{k\in\zz_+}$ an {\rm exp-IATI}.
Then there exist an $N\in\nn$ and
a sequence $\{\widetilde{Q}
_k\}_{k\in\zz_+}$
of bounded linear integral operators on
$L^2(\mathcal{X})$ such that,
for any $f \in (\icgg)'$,
$$f= \sum_{k=0}^\infty\widetilde{Q}_kQ_kf
\qquad \text{in}\quad (\icgg)',$$
where, for any $k\in\zz_+$, the kernel of $
\widetilde{Q}_k$,
still denoted by $\widetilde{Q}_k$,  satisfies
\eqref{4.23x}, \eqref{4.23y}, and the following
integral condition: for any $x\in\mathcal{X}$,
$$\int_{\mathcal{X}}\widetilde{Q}_k(x,y)
\,d\mu(y)=
\int_{\mathcal{X}}\widetilde{Q}_k(y,x)
\,d\mu(y)=\begin{cases}
1 &\text{if } k \in \{0,\dots,N\},\\
0 &\text{if } k\in \{N+1,N+2,\ldots\}.
\end{cases}$$
\end{lemma}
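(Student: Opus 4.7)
The plan is to adapt the classical Coifman-type decomposition to the inhomogeneous setting. For an integer $N\in\nn$ to be chosen later, define the initial operators
$$\widetilde{Q}_k^{(0)} := \sum_{\ell\in\zz_+,\ |\ell-k|\leq N} Q_\ell,\qquad k\in\zz_+.$$
Using the identity $\sum_{k\in\zz_+} Q_k = I$ from Definition \ref{iati}(i), a straightforward rearrangement gives
$$\sum_{k\in\zz_+} \widetilde{Q}_k^{(0)}\,Q_k = I - R_N,\qquad R_N := \sum_{k,\,\ell\in\zz_+,\ |k-\ell|>N} Q_\ell\,Q_k.$$
Once $N$ is taken large enough that $I-R_N$ is invertible on a suitable space containing $\icgg$, we may set $\widetilde{Q}_k := (I-R_N)^{-1}\widetilde{Q}_k^{(0)} = \sum_{j\geq 0} R_N^j \widetilde{Q}_k^{(0)}$, so that $f = \sum_{k\in\zz_+} \widetilde{Q}_k Q_k f$ in $(\icgg)'$.

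The key technical input is an almost-orthogonality estimate for $Q_\ell Q_k$ when $|\ell - k|$ is large. If, say, $k > \ell + N$, then $k\geq 1$, so the cancellation condition (v) of Definition \ref{10.23.2} yields $\int_\cx Q_k(z,y)\,d\mu(z) = 0$, and one writes
$$Q_\ell Q_k(x,y) = \int_\cx [Q_\ell(x,z) - Q_\ell(x,y)]\,Q_k(z,y)\,d\mu(z).$$
Applying the regularity condition of $Q_\ell$ together with the exponential decay of both kernels produces the bound $|Q_\ell Q_k(x,y)|\lesssim \delta^{(k-\ell)\eta} R_\gamma(x,y;\ell)$. The symmetric case $\ell > k + N$ is analogous. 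The only term lacking cancellation would be $Q_0$, but whenever $|k-\ell|>N\geq 0$ at least one of the two indices is positive, so the cancellation of the other factor can always be exploited (using Definition \ref{iati}(iii) for the regularity of $Q_0$ when needed). Summing these estimates over $|k-\ell|>N$ shows that $\|R_N\|\lesssim \delta^{N\eta}$ on both $\icgg$ and $(\icgg)'$, hence $N$ can be chosen so that $\|R_N\|\leq 1/2$ and the Neumann series converges.

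It remains to verify the three listed kernel properties of $\widetilde{Q}_k$. Since $\widetilde{Q}_k^{(0)}$ is a finite sum of at most $2N+1$ kernels $Q_\ell$ whose scales satisfy $\delta^\ell\sim\delta^k$, it satisfies \eqref{4.23x} and \eqref{4.23y} directly from the kernel estimates of the exp-ATI. Each iterate $R_N^j\widetilde{Q}_k^{(0)}$ preserves these bounds with a geometric gain at most $2^{-j}$, by iterating the almost-orthogonality estimate together with a triangle-type integral inequality for the weights $R_\gamma(\cdot,\cdot;k)$; termwise summation of the Neumann series then yields the required estimates for $\widetilde{Q}_k$. For the integral condition, Definition \ref{iati}(ii)–(iii) give $\int_\cx Q_\ell(z,y)\,d\mu(y) = \mathbf{1}_{\{\ell=0\}}$ for every $z\in\cx$, whence
$$\int_\cx \widetilde{Q}_k^{(0)}(x,y)\,d\mu(y) = \begin{cases} 1 & \text{if } k\in\{0,\dots,N\},\\ 0 & \text{if } k\in\{N+1,N+2,\dots\}.\end{cases}$$
Moreover every term $Q_\ell Q_k$ of $R_N$ has a factor $Q_k$ with $k\geq 1$ on the right (either because both indices are positive, or because $\ell=0$ forces $k>N\geq 1$), so $\int_\cx R_N(x,y)\,d\mu(y) = 0$ for every $x$. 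This identity propagates through every power $R_N^j$ and hence through $(I-R_N)^{-1}$, giving the claimed normalization; the analogous identity in the first variable follows by the symmetric argument.

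The main obstacle will be the kernel bookkeeping for the Neumann series: one must show that iterated composition with $R_N$ preserves the weighted form $R_\gamma(x,y;k)$ with the same exponent $\gamma$ while delivering a summable geometric gain. This requires a careful composition-type estimate for the weights $R_\gamma$ that respects both the size and the Hölder regularity, and is carried out in exactly the same spirit as in the homogeneous analogue Lemma \ref{h_c_crf}. Once these bounds are in place, convergence of $\sum_k \widetilde{Q}_k Q_k f$ in $(\icgg)'$ follows by pairing with a test function and applying the size/regularity estimates via duality.
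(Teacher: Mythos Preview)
The paper does not prove this lemma; it quotes it verbatim from \cite[Theorem~6.8]{hlyy} and gives no argument of its own. Your Coifman-type scheme (finite-band approximation $\widetilde{Q}_k^{(0)}=\sum_{|\ell-k|\le N}Q_\ell$, remainder $R_N=\sum_{|k-\ell|>N}Q_\ell Q_k$, Neumann inversion) is exactly the strategy used in \cite{hlyy}, so in spirit you are reproducing the intended proof rather than finding an alternative.

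Two remarks on the details. First, your case analysis for the integral condition of $R_N$ is incomplete: you assert that every term $Q_\ell Q_k$ with $|k-\ell|>N$ has $k\ge 1$, but the case $k=0$, $\ell>N$ is also present. The conclusion $\int_\cx R_N(x,y)\,d\mu(y)=0$ still holds there, since $\int_\cx Q_\ell Q_0(x,y)\,d\mu(y)=\int_\cx Q_\ell(x,z)\,d\mu(z)=0$ by the cancellation of $Q_\ell$ ($\ell\ge 1$); you should include this case explicitly. Second, the statement requires the regularity estimate \eqref{4.23y} with an arbitrary exponent $\beta\in(0,\eta)$, strictly smaller than the native regularity $\eta$ of the exp-IATI. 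The point is that each composition with $R_N$ may cost a little regularity, so one must start from $\eta$ and show that the Neumann series still delivers any fixed $\beta<\eta$; your sketch (``preserves these bounds with the same exponent'') glosses over this loss, which is precisely where the careful bookkeeping in \cite{hlyy} is spent.
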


\begin{lemma}\label{icrf}
Let $\beta,\ \gamma \in (0, \eta)$ with $\eta$
as in Definition
\ref{10.23.2}, and $\{Q_k\}_{k\in\zz_+}$ an
{\rm exp-IATI}. For any $k\in\zz_+,$ $
\alpha\in\ca_k$, and
$m\in\{1,\dots,N(k,\alpha)\}$, suppose that $
\ya$ is an arbitrary point in $\qa$.
Then there exist an $N\in\nn$ and a sequence
$\{\widetilde{Q}_k\}_{k\in\zz_+}$ of bounded
linear integral operators on $L^2(\mathcal{X})$ such
that, for any $f \in (\icgg)'$,
\begin{align*}
f(\cdot) &= \sum_{\alpha \in \ca_0}\sum_{m=1}
^{N(0,\alpha)}\int_{\qo}
\widetilde{Q}_0(\cdot,y)\,d\mu(y)Q^{0,m}
_{\alpha,1}(f)\\
&\qquad+\sum_{k=1}^N\sum_{\alpha \in \ca_k}
\sum_{m=1}^{N(k,\alpha)}
\mu\left(\qa\right)\widetilde{Q}_k(\cdot,
\ya)Q^{k,m}_{\alpha,1}(f)\\
& \qquad+\sum_{k=N+1}^\infty\sum_{\alpha
\in \ca_k}\sum_{m=1}^{N(k,\alpha)}
\mu\left(\qa\right)\widetilde{Q}_k(\cdot,
\ya)Q_kf\left(\ya\right)
\end{align*}
in $(\icgg)',$ where, for any $k\in\{0,\ldots,N\}$,
$\alpha\in\ca_k$, and $m\in\{1,\dots,N(k,\alpha)\}$,
$$Q^{k,m}_{\alpha,1}(f):=\frac{1}{\mu(\qa)}
\int_{\qa}Q_kf(u)\,d\mu(u).$$
Moreover, for any $k\in\zz_+$, the kernel of
$\widetilde{Q}_k$,
still denoted by $\widetilde{Q}_k$, satisfies the
same conditions as in Lemma
\ref{ih_c_crf} with the positive constant $C$
independent of $y_\az^{k,m}$, where
$\alpha\in\ca_k$ and $m\in\{1,\dots,N(k,\alpha)\}$.
\end{lemma}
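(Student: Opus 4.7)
The plan is to derive the discrete formula directly from the continuous inhomogeneous Calder\'on reproducing formula in Lemma \ref{ih_c_crf}, namely $f=\sum_{k=0}^\infty\widetilde{Q}_kQ_kf$, by discretizing each term over the dyadic cube system $\{\qa\}$ from Lemma \ref{10.22.1} and absorbing the resulting discretization error via a Neumann-series argument. For each $k$, I would write
\[
\widetilde{Q}_kQ_kf(x)=\sum_{\alpha\in\ca_k}\sum_{m=1}^{N(k,\alpha)}\int_{\qa}\widetilde{Q}_k(x,y)Q_kf(y)\,d\mu(y)
\]
and treat the regimes $k=0$, $1\leq k\leq N$, and $k>N$ separately: for $k>N$ (with $N$ chosen large, comparable to $j_0$), I replace both $\widetilde{Q}_k(x,y)$ by $\widetilde{Q}_k(x,\ya)$ and $Q_kf(y)$ by $Q_kf(\ya)$ on each $\qa$, producing the main term $\mu(\qa)\widetilde{Q}_k(x,\ya)Q_kf(\ya)$ plus two error kernels controlled by the $\eta$-regularity of $Q_k$ and of $\widetilde{Q}_k$, which contribute factors of order $\delta^{j_0\eta}$. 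For $1\leq k\leq N$ the scale of $\qa$ is comparable to the scale of $Q_k$, so I instead replace $Q_kf(y)$ by its average $Q^{k,m}_{\alpha,1}(f)$ and $\widetilde{Q}_k(x,y)$ by $\widetilde{Q}_k(x,\ya)$, yielding the middle sum plus a similar error. For $k=0$, since $\widetilde{Q}_0$ has a nonvanishing integral and only the first difference regularity is available, I would use the cleaner identity $\int_{\qo}\widetilde{Q}_0(x,y)\,d\mu(y)\cdot Q^{0,m}_{\alpha,1}(f)$ together with an error controlled by the regularity of $Q_0$.

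Writing the three main sums as $T_N f(x)$ and the aggregated error as $R_N f(x)$, the continuous formula becomes $f=T_Nf+R_Nf$. The heart of the proof is to show that $R_N$ is a bounded operator on $(\icgg)'$ (or rather, between appropriate test-function spaces, and then dualized) with operator norm strictly less than $1$ once $N$ and the refinement parameter $j_0$ in Lemma \ref{10.22.1} are chosen large. To this end, I would apply the standard almost-orthogonality estimates for the composition of two exp-(I)ATI kernels: pointwise, the kernel of $R_N$ admits a bound of the form
\[
|R_N(x,y)|\lesssim\delta^{\eta(j_0\wedge N)}\sum_{k=0}^\infty\delta^{\eta|k-k(x,y)|}R_\gamma(x,y;k\wedge k(x,y)),
\]
which yields boundedness of $R_N$ on $\cg(\beta,\gamma)$ and $\mathring{\cg}(\beta,\gamma)$ with small norm. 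Once $\|R_N\|<1$, the operator $I-R_N$ is invertible and $f=T_N(I-R_N)^{-1}f$; absorbing $(I-R_N)^{-1}$ into each $\widetilde{Q}_k$ produces new operators, still denoted $\widetilde{Q}_k$, whose kernels I must verify satisfy \eqref{4.23x}, \eqref{4.23y}, and the integral condition of Lemma \ref{ih_c_crf}. This last verification is routine because $(I-R_N)^{-1}$ preserves the relevant size, regularity, and cancellation classes, a fact that follows from the same almost-orthogonality bound applied to compositions.

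The step I expect to be the main obstacle is establishing the smallness of $\|R_N\|$ on the test-function spaces $\cg(\beta,\gamma)$ uniformly in the choice of the dyadic representatives $\ya$. One must carefully estimate three distinct error contributions (swapping $\widetilde{Q}_k(x,y)\mapsto\widetilde{Q}_k(x,\ya)$, swapping $Q_kf(y)\mapsto Q_kf(\ya)$ or its average, and the cross term) and then control their composition with $Q_j$ for $j\neq k$ by the exponential-decay version of the Coifman-type lemma. Special care is needed at $k=0$ because only the first-order cancellation and regularity in Definition \ref{iati}(iii) are available, which forces the use of the average $Q^{0,m}_{\alpha,1}(f)$ weighted by the full integral $\int_{\qo}\widetilde{Q}_0(x,y)\,d\mu(y)$ rather than a point evaluation, and one must track the constants to ensure they remain independent of $\ya$. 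Modulo these technicalities, the argument is parallel to the homogeneous case in Lemma \ref{crf} established in \cite[Theorem 5.11]{hlyy}, with the extra bookkeeping of the inhomogeneous cutoff at level $N$.
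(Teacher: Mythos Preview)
The paper does not prove this lemma; it is quoted verbatim from \cite[Theorem 6.13]{hlyy} (see the sentence immediately preceding Lemma \ref{ih_c_crf}), so there is no ``paper's own proof'' to compare against. Your outline is the standard Neumann-series discretization argument and is essentially the approach taken in \cite{hlyy}: one starts from the continuous formula, discretizes over the refined dyadic cubes $\qa$, isolates an error operator $R_N$ with norm $O(\delta^{j_0\eta})$ on the test-function spaces, and inverts $I-R_N$.

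One point to flag in your sketch: the smallness parameter is $j_0$ (the refinement level of the sub-cubes $\qa\subset Q_\alpha^k$), not $N$. The integer $N$ in the statement is the fixed cutoff already present in the continuous formula of Lemma \ref{ih_c_crf} that separates the non-cancellative regime $k\le N$ from the cancellative regime $k>N$; it is not a parameter you get to enlarge to shrink the error. The norm of $R_N$ is made small by taking $j_0$ large (i.e., using fine enough sub-cubes so that $\diam(\qa)\lesssim\delta^{k+j_0}\ll\delta^k$), and the resulting bound is uniform in the choice of representatives $\ya\in\qa$. With that correction, your plan matches the argument in \cite{hlyy}.
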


\section{Besov and Triebel--Lizorkin spaces
and spaces of Lipschitz-type on spaces of
homogeneous type}\label{s2}

In this section, we introduce some notions of
function spaces
on spaces of homogeneous type. Let us begin
with the notions
of Besov and Triebel--Lizorkin spaces on
spaces of homogeneous type;
see \cite[Definitions 3.1, 4.1,  5.1 and 5.8]{whhy}.

\begin{definition}\label{h}
Let $\beta,\ \gamma \in (0, \eta)$ with $\eta$ as in
Definition \ref{10.23.2}, $s\in(-(\beta\wedge\gamma), \beta\wedge\gamma)$,
and $p,\ q\in(0,\infty]$ satisfy
\begin{equation*}
\beta\in\left(\max\left\{0,-s+
\omega\left(\frac{1}{p}-1\right)_+\right\},
\eta\right)\quad \textit{and}\quad
\gamma\in\left(\max\left\{s,
\omega\left(\frac{1}{p}-1\right)_+\right\},\eta\right)
\end{equation*}
with $\omega$ as in \eqref{eq-doub}.
Let $\{Q_k\}_{k\in\zz}$ be an exp-ATI.
\begin{enumerate}
\item[\rm{(i)}] Let $p\in(p(s,
\beta\wedge\gamma),\infty]$
with $p(s,\beta\wedge\gamma)$ as in
\eqref{pseta}, and $q \in (0,\infty]$.
The \emph{homogenous Besov space} $\hb$ is
defined by setting
$$
\hb := \left\{f  \in\lf(\cgg\r)' :\  \|f\|
_{\hb}:=\left[\sum_{k \in \zz}
\delta^{-ksq}\|Q_k(f)\|_{\lp}^q\right]^{1/q}
<\infty\right\}
$$
with usual modifications made when $p=\infty$
or $q=\infty$.
\item[\rm{(ii)}] Let $p\in(p(s,
\beta\wedge\gamma),\infty)$ and $q \in (p(s,
\beta\wedge\gamma),\infty]$.
The \emph{homogenous Triebel--Lizorkin space} $\hf$ is defined by setting
$$
\hf := \left\{f  \in \lf(\cgg\r)' :\  \|f\|
_{\hf}:=\left\|\left[\sum_{k \in \zz}
\delta^{-ksq}|Q_k(f)|^q\right]^{1/q}\right\|_{\lp}
<\infty\right\}
$$
with usual modification made when $q=\infty$.
\end{enumerate}
\end{definition}

For any measurable set $E\subset \cx$ with
$\mu(E)\in(0,\fz)$,
and for any non-negative measurable
function $f$, let
\begin{equation*}
m_E(f):=\frac{1}{\mu(E)}\int_E f(x)\,d\mu(x).
\end{equation*}
Now, we recall the notion of inhomogeneous
Besov and Triebel--Lizorkin spaces on spaces
of homogeneous type.

\begin{definition}\label{ih}
Let  $\beta,\ \gamma \in (0, \eta)$ with
$\eta$ as in
Definition \ref{10.23.2}, $s\in(-
(\beta\wedge\gamma),\beta\wedge\gamma)$,
and $p,\ q\in(0,\infty]$ satisfy
\begin{equation*}
\beta\in\left(\max\left\{0,-s+\omega
\left(\frac{1}{p}-1\right)_+\right\},\eta\right)
\qquad\text{and}\qquad
\gamma\in\left(\omega\left(\frac{1}
{p}-1\right)_+,\eta\right)
\end{equation*}
with $\omega$ as in \eqref{eq-doub}.
Let $\{Q_k\}_{k\in\zz_+}$ be an exp-IATI and
$N\in\nn$ as in Lemma \ref{icrf}.
\begin{enumerate}
\item[\rm{(i)}] Let $p\in(p(s,
\beta\wedge\gamma),\infty]$ with $p(s,
\beta\wedge\gamma)$ as in \eqref{pseta},
and $q \in (0,\infty]$. The
\emph{inhomogeneous Besov space $\ihb$} is
defined by setting
\begin{align*}
\ihb :={}& \left\{f  \in (\icgg)' :\  \|f\|_{\ihb}:=
\left\{\sum_{k=0}^N\sum_{\alpha \in \ca_k}
\sum_{m=1}^{N(k,\alpha)}
\mu\left(\qa\right)\left[m_{\qa}\left(|Q_k(f)|
\right)\r]^p\right\}^{1/p}\right.\\
&\qquad\qquad\qquad\qquad\qquad\qquad
\qquad\left.+\left[\sum_{k=N+1}^{\infty} \delta^{-ksq}
\|Q_k(f)\|_{\lp}^q\right]^{1/q}<\infty\right\}
\end{align*}
with usual modifications made when
$p=\infty$ or  $q=\infty$.
\item[\rm{(ii)}] Let $p\in(p(s,
\beta\wedge\gamma),\infty)$ and
$q \in (p(s,\beta\wedge\gamma),\infty]$. The
\emph{inhomogeneous
Triebel--Lizorkin space $\ihf$} is defined by setting
\begin{align*}
\ihf :={}& \left\{f  \in (\icgg)' :\  \|f\|
_{\ihf}:=\left\{\sum_{k=0}^N
\sum_{\alpha \in \ca_k}\sum_{m=1}^{N(k,
\alpha)}\mu\left(\qa\right)\left[m_{\qa}
\left(|Q_k(f)|\right)\r]^p\right\}^{1/p}\r.\\
&\qquad\qquad\qquad\qquad\qquad\qquad
\qquad\left.+\left\|\left[\sum_{k=N+1}^\infty
\delta^{-ksq}|Q_k(f)|^q\right]^{1/q}\right\|_{\lp}<\infty\right\}
\end{align*}
with usual modification made when $q=\infty$.
\end{enumerate}
\end{definition}

\begin{definition}\label{hfi}
Let $\beta,\ \gamma \in (0, \eta)$, $s\in(-
(\beta\wedge\gamma),\beta\wedge\gamma)$,
and $q\in (p(s,\beta\wedge\gamma),\infty]$ with
$\eta$ as in Definition \ref{10.23.2} and
$p(s,\beta\wedge\gamma)$ as in
\eqref{pseta}. Let $\{Q_k\}_{k\in\zz}$ be an
exp-ATI. For  any $k\in\zz$ and $\alpha\in \ca_k$,
let $Q_\az^k$ be as in Lemma \ref{10.22.1}.
Then the \emph{homogeneous Triebel--
Lizorkin space $\hfi$} is defined by setting
\begin{align*}
\hfi := \Bigg\{f  \in (\cgg)' :\  \|f\|_{\hfi}:={}&\sup_{l \in \zz}
\sup_{\alpha\in\ca_l}\left[\frac{1}
{\mu(Q_\alpha^l)}\r.\\
&\quad\left.\left.\times\int_{Q_\alpha^l}
\sum_{k=l}^\infty\delta^{-ksq}
|Q_k(f)(x)|^q\,d\mu(x)\right]^{1/q}<\infty\right\}
\end{align*}
with usual modification made when $q=\infty$.
\end{definition}

\begin{definition}\label{ihfi}
Let  $\beta,\ \gamma \in (0, \eta)$ with
$\eta$ as in
Definition \ref{10.23.2}, and $s\in(-
(\beta\wedge\gamma),\beta\wedge\gamma)$.
Let $\{Q_k\}_{k\in\zz}$ be an exp-IATI and
$N\in\nn$ as in Lemma \ref{icrf}.
The \emph{inhomogeneous Triebel--Lizorkin
space $\ihfi$} is defined by setting
\begin{align*}
\ihfi := &{}\left\{f  \in (\icgg)' :\  \|f\|_{\ihfi}:={}
\max\left[\sup_{k\in\{0,\dots,N\}}
\sup_{\alpha \in \ca_k}\sup_{m\in\{1,\dots,N(k,
\alpha)\}}m_{\qa}(|Q_k(f)|),\r.\r.\\
&\qquad\qquad\qquad\left.\left.\sup_{l
\in \nn,\ l>N} \sup_{\alpha\in\ca_l}\left
\{\frac{1}{\mu(Q_\alpha^l)}
\int_{Q_\alpha^l}\sum_{k=l}^\infty\delta^{-ksq}|
Q_k(f)(x)|^q\,d\mu(x)\right\}^{1/q}
\r]<\infty\right\}
\end{align*}
with usual modification made when $q=\infty$.
\end{definition}

\begin{remark}
\begin{enumerate}
\item[{\rm(i)}] Recall that homogeneous Besov
and Triebel--Lizorkin spaces on $\cx$ need to
require $\mu(\cx)=\infty$ which is not
necessary for inhomogeneous Besov
and Triebel--Lizorkin spaces on $\cx$.
\item[{\rm(ii)}] It was proved in \cite{whhy} that those spaces
defined in Definitions \ref{h}, \ref{ih}, \ref{hfi},
and \ref{ihfi} are independent of the choices of
$\beta$, $\gamma$, and exp-(I)ATIs (see
\cite[Propositions 3.12, 3.15, 4.3,
4.4, 5.4, 5.5, 5.10, and 5.11]{whhy}), which
makes those spaces well defined.
\end{enumerate}
\end{remark}

Next, we introduce the notion of Lipschitz-type
spaces on spaces of homogeneous type,
which originates from \cite{s70,jw84,j94}; see also
\cite[Definition 3.1]{my09}.

\begin{definition}\label{lipi}
Let $p,\ q\in(0,\infty]$, $s\in(0,\infty)$, and
$\widetilde{C}\in (0,\infty)$ be a constant.
\begin{enumerate}
\item[{\rm(i)}] A function $f\in L_\loc^p(\cx)$
is said to belong to the \emph{Lipschitz-type
space} $\lips$ if
\begin{align*}
\|f\|_{\lips}&:=\left\{\sum_{k=-\infty}
^\infty\delta^{-ksq}
\left[\int_{\cx}\frac{1}{\mu(B(x,\widetilde{C}
\delta^k))}\int_{B(x,
\widetilde{C}\delta^k)}
|f(x)-f(y)|^p\,d\mu(y)\,d\mu(x)\right]^{q/p}
\right\}^{1/q},
\end{align*}
with usual modifications made when
$p=\infty$ or  $q=\infty$, is finite.
\item[{\rm(ii)}] A function $f\in L^p(\cx)$ is
said to belong to the \emph{Lipschitz-type
space} $\ilips$ if
$$\|f\|_{\lips}<\infty.$$
Moreover, for any $f\in\ilips$, let
$$\|f\|_{\ilips}:=\|f\|_{L^p(\cx)}+\|f\|_{\lips}.$$
\item[{\rm(iii)}] A function $f\in L_\loc^1(\cx)$
is said to belong to the \emph{Lipschitz-type
space} $\lipsb$ if
\begin{align*}
\|f\|_{\lipsb}&:=\left(\sum_{k=-\infty}
^\infty\delta^{-ksq}
\left\{\int_{\cx}\left[\frac{1}{\mu(B(x,
\widetilde{C}\delta^k))}\int_{B(x,
\widetilde{C}\delta^k)}
|f(x)-f(y)|\,d\mu(y)\right]^p\,d\mu(x)\right\}
^{q/p}\right)^{1/q},
\end{align*}
with usual modifications made when
$p=\infty$ or  $q=\infty$, is finite.
\item[{\rm(iv)}] A function $f\in L^p(\cx)$ is
said to belong to the \emph{Lipschitz-type
space} $\ilipsb$ if
$$\|f\|_{\lipsb}<\infty.$$
Moreover, for any $f\in\ilipsb$, let
$$\|f\|_{\ilipsb}:=\|f\|_{L^p(\cx)}+\|f\|_{\lipsb}.$$
\end{enumerate}
\end{definition}

When $\cx$ is a $d$-set of $\rn$, the
Lipschitz-type space $\lips$,
for any given $s\in\rr$ and $p,\ q\in (0,\infty]$,
was introduced in \cite{j96,jw97};
see also \cite{g02,ghl03}.
When $\cx$ is an Ahlfors $n$-regular metric
measure space, these spaces were introduced
in \cite{yl04}. We also point out that,
when $\cx$ is a metric measure space, these
spaces may be non-trivial even
when $s\in(1,\infty)$ (see, for instance,
\cite{yl04} for more details).

It is easy to show that the spaces $\lips$,
$\ilips$, $\lipsb$, and $\ilipsb$
are independent of the choice of $\widetilde{C}
$ (see, for instance, \cite{j96} and \cite{yl04}).
These spaces have the following basic properties, whose
proofs are similar to those presented in \cite{yl04},
and we omit the details here.

\begin{proposition}\label{pro-lipi}
Let $s\in(0,\infty)$.
\begin{enumerate}
\item[{\rm(i)}] If $p\in[1,\infty]$ and $q\in (0,\infty]$,
then $$\lips\subset\lipsb\quad \text{and}
\quad \ilips\subset\ilipsb.$$
\item[{\rm(ii)}] If $p\in(0,\infty]$ and
$0<q_0\leq q_1\leq\infty$, then
$\dot{L}(s,p,q_0;\cx)\subset\dot{L}(s,p,q_1;\cx)$,
$\dot{L}_b(s,p,q_0;\cx)\subset\dot{L}_b(s,p,q_1;\cx)$,
$L(s,p,q_0;\cx)\subset L(s,p,q_1;\cx)$, and
$L_b(s,p,q_0;\cx)\subset L_b(s,p,q_1;\cx)$.
\item[{\rm(iii)}] If $p\in(0,\infty]$, $q_0,\ q_1\in(0,\infty]$,
and $\varepsilon\in(0,\infty)$, then
$L(s+\varepsilon,p,q_0;\cx)\subset L(s,p,q_1;\cx)$
and
$L_b(s+\varepsilon,p,q_0;\cx)\subset L_b(s,p,q_1;\cx)$.
\end{enumerate}
\end{proposition}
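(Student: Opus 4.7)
The plan is to handle the three items in sequence, each reducing to a direct manipulation of the defining sums.

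For item (i), the key observation is that, for $p \in [1,\infty]$ and any ball $B$ of positive finite measure, Jensen's inequality applied to the convex function $t \mapsto t^p$ yields
\begin{equation*}
\left[\frac{1}{\mu(B)}\int_B |f(x)-f(y)|\,d\mu(y)\right]^p \leq \frac{1}{\mu(B)}\int_B |f(x)-f(y)|^p\,d\mu(y).
\end{equation*}
Taking $B = B(x,\widetilde{C}\delta^k)$, integrating in $x$, raising to the $q/p$-th power, weighting by $\delta^{-ksq}$, summing in $k$, and finally taking the $q$-th root then gives $\|f\|_{\lipsb} \leq \|f\|_{\lips}$, and hence $\lips \subset \lipsb$; the inhomogeneous counterpart follows after adding $\|f\|_{L^p(\cx)}$ to both sides.

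For item (ii), the claim reduces to the elementary embedding $\ell^{q_0}(\zz) \hookrightarrow \ell^{q_1}(\zz)$ for $0 < q_0 \leq q_1 \leq \infty$, applied to the sequence $(c_k)_{k \in \zz}$ obtained from the inner $L^p$-type quantity multiplied by $\delta^{-ks}$, in either the $\dot{L}$ or the $\dot{L}_b$ form. Since $\|(c_k)\|_{\ell^{q_1}(\zz)} \leq \|(c_k)\|_{\ell^{q_0}(\zz)}$, all four listed inclusions are immediate.

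For item (iii), I set
\begin{equation*}
A_k := \left[\int_\cx \frac{1}{\mu(B(x,\widetilde{C}\delta^k))}\int_{B(x,\widetilde{C}\delta^k)} |f(x)-f(y)|^p\,d\mu(y)\,d\mu(x)\right]^{1/p}
\end{equation*}
(with the obvious modification when $p=\infty$) and split $\zz = \{k \leq 0\}\cup\{k>0\}$. For $k \leq 0$, the inequality $|f(x)-f(y)|^p \lesssim |f(x)|^p + |f(y)|^p$ (valid for all $p \in (0,\infty]$ up to a multiplicative constant), Fubini's theorem, and the doubling condition (which yields $\mu(B(x,\widetilde{C}\delta^k)) \sim \mu(B(y,\widetilde{C}\delta^k))$ whenever $d(x,y)<\widetilde{C}\delta^k$) together give $A_k \lesssim \|f\|_{L^p(\cx)}$; combined with the geometric convergence of $\sum_{k\leq 0}\delta^{-ksq_1}$ (which holds since $s>0$ and $\delta\in(0,1)$), this piece is controlled by $\|f\|_{L^p(\cx)}^{q_1}$, which is finite because $f \in L(s+\varepsilon,p,q_0;\cx)$ by hypothesis. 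For $k > 0$, a single-term extraction from the convergent sum $\sum_j \delta^{-j(s+\varepsilon)q_0}A_j^{q_0}$ yields the pointwise estimate $A_k \leq \|f\|_{\dot{L}(s+\varepsilon,p,q_0;\cx)}\delta^{k(s+\varepsilon)}$, so that $\delta^{-ksq_1}A_k^{q_1} \leq \|f\|_{\dot{L}(s+\varepsilon,p,q_0;\cx)}^{q_1}\delta^{k\varepsilon q_1}$ and the tail sum is again a convergent geometric series. The $L_b$-case is identical after replacing $A_k$ by its $L_b$-analogue and invoking the same two bounds.

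The main obstacle is the low-frequency regime $k \leq 0$ in item (iii): this is precisely the step that forces the inhomogeneous formulation, since the bound $A_k \lesssim \|f\|_{L^p(\cx)}$ depends essentially on the integrability of $f$ built into the inhomogeneous norm and would fail without it in a purely homogeneous analogue.
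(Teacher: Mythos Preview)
The paper does not actually prove this proposition; it states only that the proofs are similar to those in \cite{yl04} and omits all details. Your argument supplies precisely the standard reasoning one would expect: Jensen's inequality for (i), the $\ell^{q_0}\hookrightarrow\ell^{q_1}$ embedding for (ii), and the split into large and small scales for (iii). For (i), (ii), and the $L$-part of (iii) your proof is correct as written.

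There is one small gap. In (iii) for the $L_b$-spaces you assert that the argument is ``identical after replacing $A_k$ by its $L_b$-analogue and invoking the same two bounds.'' The second bound carries over verbatim, but the first bound $A_k^{(b)}\lesssim\|f\|_{L^p(\cx)}$ for $k\le 0$ requires, after the triangle inequality, controlling
\[
\int_{\cx}\left[\frac{1}{\mu(B(x,\widetilde{C}\delta^k))}\int_{B(x,\widetilde{C}\delta^k)}|f(y)|\,d\mu(y)\right]^p d\mu(x)
\]
by $\|f\|_{L^p(\cx)}^p$. For $p\ge 1$ this is immediate (Minkowski or Jensen), but for $p<1$ the averaging operator is not bounded on $L^p(\cx)$, so the step fails as written. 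Your Fubini-and-doubling argument for the $L$-case works for all $p>0$ because the exponent $p$ sits \emph{inside} the $y$-integral; in the $L_b$-case it sits outside, and that is exactly where the difficulty lies. Since the paper only ever uses the $L_b$-spaces with $p\ge 1$ (see Propositions \ref{lipinbtl} and \ref{ilipinbtl}, and Proposition \ref{llblt}(ii) which is itself restricted to $p\ge 1$), this is a marginal range, but you should either restrict the $L_b$-claim in (iii) to $p\ge 1$ or supply a separate argument for $p<1$.
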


Now, we introduce the following Lipschitz-type
spaces $\lipstu$ and $\ilipstu$.
\begin{definition}\label{lipii}
Let $p,\ q\in(0,\infty]$, $s,\ u\in(0,\infty)$, and
$\widetilde{C}\in (0,\infty)$ be a constant..
\begin{enumerate}
\item[{\rm(i)}]
A function $f\in L_\loc^u(\cx)$ is said to
belong to the \emph{Lipschitz-type space}
$\lipstu$ if
\begin{align*}
\|f\|_{\lipstu}&:=\left\|\left\{\sum_{k=-\infty}
^\infty\delta^{-ksq}
\left[\frac{1}{\mu(B(\cdot,\widetilde{C}
\delta^k))}\int_{B(\cdot,\widetilde{C}\delta^k)}
|f(\cdot)-f(y)|^u\,d\mu(y)\right]^{\frac{q}{u}}
\right\}^{1/q}\right\|_{L^p(\cx)},
\end{align*}
with usual modification made when
$q=\infty$, is finite.
\item[{\rm(ii)}]
A function $f\in L^p(\cx)$ is said to belong to
the \emph{Lipschitz-type space} $\ilipstu$
if
\begin{align*}
&\|f\|_{\ilipstu}\\
&\hs:=\|f\|_{L^p(\cx)}+\left\|\left
\{\sum_{k=0}^\infty\delta^{-ksq}
\left[\frac{1}{\mu(B(\cdot,\widetilde{C}
\delta^k))}\int_{B(\cdot,
\widetilde{C}\delta^k)}
|f(\cdot)-f(y)|^u\,d\mu(y)\right]^{\frac{q}{u}}
\right\}^{1/q}\right\|_{L^p(\cx)},
\end{align*}
with usual modification made when
$q=\infty$, is finite.
\end{enumerate}
\end{definition}

It is easy to see that the spaces $\ilipstu$
with $u\in(0,p)$, and $\lipstu$ are independent
of the choice of $\widetilde{C}$ (see, for
instance, \cite{j96} and \cite{yl04}).
If $u=1$, we denote $\lipstu$ and $\ilipstu$
simply, respectively, by $\lipst$ and $\ilipst$
which  were also introduced in \cite{yl04} when
$\cx$ is an Ahlfors
$n$-regular metric measure space.
Now, we give some basic properties of $\lipst$
and $\ilipst$, which can be proved in a
way similar to that used in the proof
of \cite[Proposition 3.5]{yl04};
we omit the details here.

\begin{proposition}\label{pro-lipii}
Let $s\in(0,\infty)$.
\begin{enumerate}
\item[{\rm(i)}] If $p\in(0,\infty]$ and
$0<q_0\leq q_1\leq\infty$, then
$\dot{L}_t(s,p,q_0;\cx)\subset\dot{L}
_t(s,p,q_1;\cx)$ and
$L_t(s,p,q_0;\cx)\subset L_t(s,p,q_1;\cx)$.
\item[{\rm(ii)}] If $p\in(0,\infty]$, $q_0,\
q_1\in(0,\infty]$,
and $\varepsilon\in(0,\infty)$, then $L_t(s+
\varepsilon,p,q_0;\cx)\subset L_t(s,p,q_1;\cx)$.
\item[{\rm(iii)}] If $p,\ q\in(0,\infty]$, then
$$\dot{L}_b(s,p\min(p,q);\cx)\subset \lipst
\subset \dot{L}_b(s,p,\max(p,q);\cx)$$
and
$$L_b(s,p\min(p,q);\cx)\subset \ilipst \subset
L_b(s,p,\max(p,q);\cx).$$
\item[{\rm (iv)}] If $p\in(0,\infty]$, then
$\dot{L}_b(s,p,p;\cx)=\dot{L}_t(s,p,p;\cx)$
and $L_b(s,p,p;\cx)=L_b(s,p,p;\cx)$.
\end{enumerate}
\end{proposition}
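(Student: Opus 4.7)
The plan is to reduce each of (i)--(iv) to elementary manipulations with $\ell^q$-quasi-norms, Fubini--Tonelli, and Minkowski's integral inequality. For brevity, write, for $k\in\zz$ and $x\in\cx$,
$$T_k(x):=\frac{1}{\mu(B(x,\widetilde{C}\delta^k))}\int_{B(x,\widetilde{C}\delta^k)}|f(x)-f(y)|\,d\mu(y),$$
so that $\|f\|_{\lipst}=\|\|(\delta^{-ks}T_k)_{k\in\zz}\|_{\ell^q}\|_{L^p(\cx)}$ and $\|f\|_{\dot{L}_b(s,p,q;\cx)}=\|(\|\delta^{-ks}T_k\|_{L^p(\cx)})_{k\in\zz}\|_{\ell^q}$. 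Each estimate below is pointwise in $x\in\cx$, so it transfers to the inhomogeneous variants after carrying the $\|f\|_{L^p(\cx)}$ term along and, where necessary, absorbing the $k<0$ tail of $\|\cdot\|_{\dot{L}_b}$ into $\|f\|_{L^p(\cx)}$ via $T_k(x)\le|f(x)|+M(f)(x)$ together with $\sum_{k\le 0}\delta^{-ksq}<\infty$; the standard modifications cover the cases $p=\infty$ or $q=\infty$.

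Items (i) and (iv) come essentially for free. For (i), the pointwise embedding $\|(a_k)\|_{\ell^{q_1}}\le\|(a_k)\|_{\ell^{q_0}}$ for $q_0\le q_1$, applied under $\|\cdot\|_{L^p(\cx)}$, is the only ingredient. For (iv), Fubini--Tonelli gives that, when $p=q$, both $\|f\|_{\dot{L}_b(s,p,p;\cx)}$ and $\|f\|_{\dot{L}_t(s,p,p;\cx)}$ equal $(\int_\cx\sum_{k\in\zz}\delta^{-ksp}T_k(x)^p\,d\mu(x))^{1/p}$.

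For (ii), I exploit the pointwise embedding $\ell^{q_0}\hookrightarrow\ell^\infty$. Setting $G(x):=\|(\delta^{-k(s+\varepsilon)}T_k(x))_{k\ge0}\|_{\ell^{q_0}}$, one has $T_k(x)\le\delta^{k(s+\varepsilon)}G(x)$ for every $k\ge0$, and hence
$$\|(\delta^{-ks}T_k(x))_{k\ge0}\|_{\ell^{q_1}}\le G(x)\,\|(\delta^{k\varepsilon})_{k\ge0}\|_{\ell^{q_1}}\ls G(x),$$
because $(\delta^{k\varepsilon})_{k\ge0}$ is geometric with ratio $\delta^{\varepsilon}\in(0,1)$. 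Taking $L^p(\cx)$-norms and adding the unchanged $\|f\|_{L^p(\cx)}$ term gives $\|f\|_{L_t(s,p,q_1;\cx)}\ls\|f\|_{L_t(s+\varepsilon,p,q_0;\cx)}$. The geometric-convergence step uses $k\ge 0$ crucially, which is exactly why this embedding is stated only in the inhomogeneous scale.

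Item (iii) is the main content. It is proved by combining the monotonicity of $\ell^q$-quasi-norms in $q$ with Minkowski's integral inequality, which in the form we use reads: for $0<a\le b\le\infty$ and any non-negative measurable $g$,
$$\|\|g\|_{L^a(\nu)}\|_{L^b(\mu)}\le\|\|g\|_{L^b(\mu)}\|_{L^a(\nu)},$$
where $L^a$ or $L^b$ may be an $\ell$-space (counting measure), and which reduces to the triangle inequality in $L^{b/a}(\mu)$, an exponent that is $\ge1$. I consider two cases. If $p\ge q$, Minkowski (with the larger $L^p$ on the outside) directly gives $\|f\|_{\lipst}\le\|f\|_{\dot{L}_b(s,p,q;\cx)}=\|f\|_{\dot{L}_b(s,p,\min(p,q);\cx)}$; while Fubini together with the embedding $\ell^q\hookrightarrow\ell^p$ (valid for $q\le p$) yields
$$\|f\|_{\dot{L}_b(s,p,p;\cx)}=\|\|\delta^{-ks}T_k\|_{\ell^p}\|_{L^p(\cx)}\le\|\|\delta^{-ks}T_k\|_{\ell^q}\|_{L^p(\cx)}=\|f\|_{\lipst},$$
i.e. $\|f\|_{\dot{L}_b(s,p,\max(p,q);\cx)}\le\|f\|_{\lipst}$. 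If instead $p\le q$, the same two ingredients apply in the opposite order: $\ell^p\hookrightarrow\ell^q$ plus Fubini gives $\|f\|_{\lipst}\le\|f\|_{\dot{L}_b(s,p,p;\cx)}=\|f\|_{\dot{L}_b(s,p,\min(p,q);\cx)}$, while Minkowski, now with the larger $\ell^q$ on the outside, yields $\|f\|_{\dot{L}_b(s,p,q;\cx)}\le\|f\|_{\lipst}$. The inhomogeneous statement follows by matching the $\|f\|_{L^p(\cx)}$ terms and absorbing the $k<0$ tail as above. No serious obstacle is expected; the sole mild technicality is to verify Minkowski's inequality in the quasi-norm range $\min(p,q)<1$, which boils down to the triangle inequality in $L^{\max(p/q,q/p)}(\mu)$ and hence holds in standard form.
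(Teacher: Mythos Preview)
The paper omits the proof entirely, pointing to the analogous result in \cite{yl04}. Your argument is exactly the standard one that reference uses: $\ell^q$-monotonicity for (i), the geometric-tail trick for (ii), Fubini for (iv), and for (iii) the generalized Minkowski inequality $\|\,\|g\|_{L^a}\|_{L^b}\le\|\,\|g\|_{L^b}\|_{L^a}$ (valid for $0<a\le b\le\infty$, as you correctly note, since it reduces to the triangle inequality in $L^{b/a}$) combined with $\ell^q$-monotonicity. So your approach and the intended one coincide.

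One caveat worth flagging: in passing to the inhomogeneous statement in (iii) you absorb the $k<0$ tail of $\|\cdot\|_{\dot L_b}$ via $T_k\le|f|+M(f)$ and the $L^p$-boundedness of $M$. That is fine for $p>1$, but for $p\le1$ the maximal function (equivalently, the averaging over the large balls $B(x,\widetilde C\delta^k)$ with $k<0$) is not bounded on $L^p(\cx)$, so the step as written does not cover the full stated range $p\in(0,\infty]$. For $p\ge1$ one can instead quote Proposition~\ref{llblt}(ii), which already gives the equivalence of $\|\cdot\|_{\ilipsb}$ with the $k\ge0$ sum plus $\|f\|_{L^p}$; for $p<1$ a different treatment of the tail is needed. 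The paper does not confront this point either, and in any case the inhomogeneous inclusions of (iii) are only invoked later with $p\ge1$.
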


The following characterizations of $\ilips$ and
$\ilipst$ come from \cite[Proposition 3.3]{my09}
which still holds true in the setting of spaces of
homogenous type; we omit the details here.

\begin{proposition}\label{llblt}
Let $s\in(0,\infty)$ and $\widetilde{C}
\in(0,\infty)$ be a constant.
\begin{enumerate}
\item[{\rm(i)}] Let $p,\ q\in (0,\infty]$. Then
$f\in\ilips$ if and only if $f\in L^p(\cx)$ and
\begin{align*}
&\|f\|_{\widetilde{L}(s,p,q;\cx)}\\
&\hs:=\left
\{\sum_{k=0}^\infty\delta^{-ksq}
\left[\int_{\cx}\frac{1}{\mu(B(x,\widetilde{C}
\delta^k))}\int_{B(x,
\widetilde{C}\delta^k)}
|f(x)-f(y)|^p\,d\mu(y)\,d\mu(x)\right]^{q/p}
\right\}^{1/q}\notag
\end{align*}
is finite. Moreover, in this case,
$$\|f\|_{\ilips}\sim\|f\|_{L^p(\cx)}+\|f\|
_{\widetilde{L}(s,p,q;\cx)}$$
with the positive equivalence constants
independent of $f$.
\item[{\rm(ii)}]Let $p\in[1,\infty]$ and
$q\in(0,\infty]$. Then $f\in\ilipsb$ if and only
if $f\in L^p(\cx)$ and
\begin{align*}
&\|f\|_{\widetilde{L}_b(s,p,q;\cx)}\\
&\quad:=\left(\sum_{k=0}^\infty\delta^{-ksq}
\left\{\int_{\cx}\left[\frac{1}{\mu(B(x,
\widetilde{C}\delta^k))}\int_{B(x,
\widetilde{C}\delta^k)}
|f(x)-f(y)|\,d\mu(y)\right]^p\,d\mu(x)\right\}
^{q/p}\right)^{1/q}\notag
\end{align*}
is finite.
 Moreover, in this case,
$$\|f\|_{\ilipsb}\sim\|f\|_{L^p(\cx)}+\|f\|
_{\widetilde{L}_b(s,p,q;\cx)}$$
with the positive equivalence constants
independent of $f$.
\end{enumerate}
\end{proposition}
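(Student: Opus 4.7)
The plan is to prove Part (i) by noticing that the only difference between $\|f\|_{\lips}$ (whose sum runs over $k\in\zz$) and $\|f\|_{\widetilde{L}(s,p,q;\cx)}$ (whose sum starts at $k=0$) is the negative-$k$ tail, and to show that this tail is controlled by $\|f\|_{L^p(\cx)}$ alone. One direction, namely $\|f\|_{\widetilde{L}(s,p,q;\cx)}\le \|f\|_{\lips}$, is immediate because the sum defining $\|f\|_{\widetilde{L}(s,p,q;\cx)}$ is a sub-sum of the one defining $\|f\|_{\lips}$. So the entire content is the reverse direction, that is, estimating
\[
\sum_{k=-\infty}^{-1}\delta^{-ksq}\left[\int_{\cx}\frac{1}{\mu(B(x,\widetilde{C}\delta^k))}\int_{B(x,\widetilde{C}\delta^k)}|f(x)-f(y)|^p\,d\mu(y)\,d\mu(x)\right]^{q/p}
\]
by a constant multiple of $\|f\|_{L^p(\cx)}^q$.

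For each fixed $k\le -1$, I would first apply the elementary inequality $|f(x)-f(y)|^p\lesssim |f(x)|^p+|f(y)|^p$ (valid with constant $1$ when $p\le 1$ and $2^{p-1}$ when $p\ge 1$) to split the inner integrand. The $|f(x)|^p$ term integrates directly to $\|f\|_{L^p(\cx)}^p$. For the $|f(y)|^p$ term, I would swap the order of integration via Fubini's theorem, observing that $y\in B(x,\widetilde{C}\delta^k)$ is equivalent to $x\in B(y,A_0\widetilde{C}\delta^k)$. The key geometric input is that, for any such $x,y$, the doubling condition \eqref{eq-doub} yields $\mu(B(x,\widetilde{C}\delta^k))\sim \mu(B(y,\widetilde{C}\delta^k))$ with constants depending only on $A_0$ and $C_{(\mu)}$, and a second use of doubling gives $\mu(B(y,A_0\widetilde{C}\delta^k))\lesssim \mu(B(y,\widetilde{C}\delta^k))$. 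Putting these together transforms the $|f(y)|^p$ double integral into a bounded constant times $\|f\|_{L^p(\cx)}^p$, uniformly in $k$.

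Thus, for every $k\le -1$, the bracketed quantity is bounded by $C\|f\|_{L^p(\cx)}^p$, so after raising to the $q/p$ power and summing, the tail is dominated by
\[
C\|f\|_{L^p(\cx)}^q\sum_{k=-\infty}^{-1}\delta^{-ksq}=C\|f\|_{L^p(\cx)}^q\sum_{k=1}^{\infty}\delta^{ksq},
\]
which is a convergent geometric series because $\delta\in(0,1)$ and $s,q\in(0,\infty)$. Combining this with the trivial direction proves
\[
\|f\|_{\ilips}=\|f\|_{L^p(\cx)}+\|f\|_{\lips}\sim \|f\|_{L^p(\cx)}+\|f\|_{\widetilde{L}(s,p,q;\cx)}.
\]
Part (ii) is handled analogously: the inequality $|f(x)-f(y)|\le|f(x)|+|f(y)|$ together with the same Fubini-with-doubling argument shows that each inner expression for $k\le -1$ is pointwise dominated by $|f(x)|+M(|f|)(x)$, whose $L^p(\cx)$ norm is comparable to $\|f\|_{L^p(\cx)}$ by the boundedness of the Hardy--Littlewood maximal operator $M$ on $L^p(\cx)$ when $p\in[1,\infty]$ (for $p=1$ one instead uses Fubini directly without invoking $M$). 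The case $p=\infty$ or $q=\infty$ follows from the usual modifications (sups replacing integrals/sums).

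The main obstacle here is essentially bookkeeping: carefully tracking how the exponents $p,q$ (and the cases $p\ge1$ versus $p<1$, $p=\infty$, $q=\infty$) interact with the elementary inequality used to split $|f(x)-f(y)|$, and verifying that the doubling-based comparability $\mu(B(x,\widetilde{C}\delta^k))\sim \mu(B(y,\widetilde{C}\delta^k))$ yields constants that are uniform in $k$. Beyond these routine technicalities, no deeper ingredient is required, which is consistent with this proposition being a natural extension of \cite[Proposition 3.3]{my09} to the present setting.
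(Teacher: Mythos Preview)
Your proof is correct and follows the standard approach; the paper itself does not give a proof but simply notes that the result ``comes from \cite[Proposition 3.3]{my09} which still holds true in the setting of spaces of homogeneous type'' and omits the details. One minor remark: since $d$ is symmetric by Definition~\ref{10.21.1}(ii), the condition $y\in B(x,\widetilde{C}\delta^k)$ is already equivalent to $x\in B(y,\widetilde{C}\delta^k)$, so the extra factor $A_0$ you introduce is unnecessary (though harmless).
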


\section{Relations with homogeneous Besov
and Triebel--Lizorkin spaces}\label{s3}

In this section, we study the relations between
spaces of Lipschitz-type
and homogeneous Besov spaces or Triebel--
Lizorkin spaces on spaces of homogenous type.
In this section, we always assume that
$\mu(\cx)=\infty$. Let us begin with the following proposition.

\begin{proposition}\label{lipinbtl}
Let $\eta$ be as in Definition \ref{10.23.2},
$\beta,\ \gamma\in(0,\eta)$, and
$s\in(0,\beta\wedge\gamma)$.
\begin{enumerate}
\item[{\rm(i)}] If $p\in(\omega/[\omega+
(\beta\wedge\gamma)],\infty]$
and $q\in(0,\infty]$, then $\lipsb\subset \hb$;
\item[{\rm(ii)}] If $p,\ q\in(\omega/[\omega+
(\beta\wedge\gamma)],\infty]$,
then $\lipst\subset \hf$.
\end{enumerate}
\end{proposition}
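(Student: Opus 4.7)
The starting point is the cancellation condition $\int_\cx Q_k(x,y)\,d\mu(y)=0$ from Definition \ref{10.23.2}(v), which allows us to write, for $f\in L^1_\loc(\cx)$,
\begin{equation*}
Q_k(f)(x)=\int_\cx Q_k(x,y)[f(y)-f(x)]\,d\mu(y).
\end{equation*}
Combining this with the size bound $|Q_k(x,y)|\le C R_\Gamma(x,y;k)$ from \eqref{10.23.3}, which is available for any $\Gamma\in(0,\infty)$, and decomposing $\cx$ into the central ball $B(x,\delta^k)$ plus the dyadic annuli $\{y\in\cx:\delta^{k-j-1}<d(x,y)\le\delta^{k-j}\}$ for $j\ge 1$, one estimates $R_\Gamma(x,y;k)\lesssim \delta^{j\Gamma}/\mu(B(x,\delta^{k-j}))$ on the $j$-th annulus. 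This would yield the pointwise master bound
\begin{equation*}
|Q_k(f)(x)|\lesssim\sum_{j=0}^\infty\delta^{j\Gamma}T_{k-j}(f)(x),
\end{equation*}
where $T_l(f)(x):=\mu(B(x,\delta^l))^{-1}\int_{B(x,\delta^l)}|f(x)-f(y)|\,d\mu(y)$ is precisely the averaging functional defining $\lipsb$ and $\lipst$. I would fix $\Gamma>s$ throughout, which is permissible since the constant in \eqref{10.23.3} is allowed to depend on $\Gamma$.

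For part (ii), multiplying the master bound by $\delta^{-ks}$ and writing the right-hand side as a convolution in $k$ produces, pointwise in $x$,
\begin{equation*}
\delta^{-ks}|Q_k(f)(x)|\lesssim\sum_{j=0}^\infty\delta^{j(\Gamma-s)}\bigl[\delta^{-(k-j)s}T_{k-j}(f)(x)\bigr],
\end{equation*}
in which the weights $\{\delta^{j(\Gamma-s)}\}_{j\ge 0}$ form an $\ell^1$ sequence. Taking the $\ell^q$-norm over $k$ pointwise in $x$, by Minkowski's inequality when $q\ge 1$ and by the elementary inequality $(\sum a_j)^q\le\sum a_j^q$ when $q<1$, one gets
\begin{equation*}
\left[\sum_{k\in\zz}\delta^{-ksq}|Q_k(f)(x)|^q\right]^{1/q}\lesssim\left\{\sum_{l\in\zz}\delta^{-lsq}[T_l(f)(x)]^q\right\}^{1/q}.
\end{equation*}
Taking the $L^p(\cx)$-norm in $x$ then yields $\|f\|_{\hf}\lesssim\|f\|_{\lipst}$ directly from the definition of $\lipst$ in Definition \ref{lipii}.

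For part (i), the $L^p(\cx)$-integration is performed first. When $p\ge 1$, Minkowski's integral inequality upgrades the master bound to $\|Q_k(f)\|_{\lp}\lesssim\sum_{j}\delta^{j\Gamma}\|T_{k-j}(f)\|_{\lp}$, after which the $\ell^q$-convolution argument above proceeds verbatim with the pointwise sequences replaced by sequences of $L^p$-norms. When $p\in(\omega/[\omega+(\beta\wedge\gamma)],1]$, Minkowski fails and one instead applies $|\sum a_j|^p\le \sum a_j^p$ before integrating, which gives $\|Q_k(f)\|_{\lp}^p\lesssim \sum_j\delta^{j\Gamma p}\|T_{k-j}(f)\|_{\lp}^p$. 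Two sub-cases of the outer aggregation then appear: when $q\ge p$, Young's convolution inequality on $\ell^{q/p}$ is applied to $\{\|T_{k-j}(f)\|_{\lp}^p\}_{j}$; when $q<p$, one has $q/p<1$ and invokes subadditivity once more. In every sub-case the convergence reduces to a geometric series of the form $\sum_{j\ge 0}\delta^{j(\Gamma-s)r}<\infty$ for some $r\in(0,\infty)$, ensured by the choice $\Gamma>s$.

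The chief technical difficulty is the case $p\le 1$ of (i): Minkowski's inequality fails both in the spatial variable (for the $L^p$-aggregation) and, when $q<1$, in the frequency variable (for the $\ell^q$-aggregation), so the argument must repeatedly invoke subadditivity while carefully tracking the exponents attached to the geometric weight $\delta^{j(\Gamma-s)}$. The lower threshold $p>\omega/[\omega+(\beta\wedge\gamma)]$ enters precisely to keep $\lipsb$ and $\hb$ in a common functional-analytic framework (both are defined under this constraint, as in Definition \ref{h} and Definition \ref{lipi}) and to guarantee that one large, fixed $\Gamma$ makes every geometric series appearing in the nested argument converge simultaneously.
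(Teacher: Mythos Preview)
Your core argument---cancellation of $Q_k$ plus the size bound \eqref{10.23.3}, followed by a dyadic-annuli decomposition giving $|Q_k(f)(x)|\lesssim\sum_{j\ge0}\delta^{j\Gamma}T_{k-j}(f)(x)$, then a discrete convolution estimate in $k$ with geometric weights---is exactly the paper's approach; the paper writes the same master bound as \eqref{qfi} and handles the subsequent $\ell^q/L^p$ aggregations by the same case split you describe.

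There is one genuine omission: you never verify that $f\in\lipsb$ (resp.\ $\lipst$) defines an element of $(\cgg)'$. This is required because $\hb$ and $\hf$ are by definition subspaces of $(\cgg)'$, and membership in $L^1_\loc(\cx)$ alone does not guarantee that $\int_\cx f(y)g(y)\,d\mu(y)$ converges for $g\in\cgg$, since test functions on $\cx$ are not compactly supported. The paper handles this first (see \eqref{lipb-d}): for $g\in\cgg$ one uses $\int_\cx g=0$ to rewrite $\langle f,g\rangle=\int_\cx[f(y)-f(x)]g(y)\,d\mu(y)$ (valid for any fixed $x$), bounds it by $\|g\|_{\cgg}\sum_{j\ge0}\delta^{j\gamma}J_j(f;x)$ via the same annular decomposition, and then averages in $x$ over a fixed ball to pull out $\|f\|_{\dot L_b(s,p,\infty;\cx)}$. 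The technique is the same as your norm estimate, so once you have the master bound this step is short, but it is not automatic and should be included. A second, smaller point: for part (ii) with $p=\infty$ the space $\hfi$ is defined via a Carleson-type supremum over dyadic cubes (Definition \ref{hfi}), not as $\|(\sum_k\cdots)^{1/q}\|_{L^\infty}$; your pointwise bound on the full square function is actually strong enough to control that norm, but you should say so explicitly rather than leave it implicit.
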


To prove Proposition \ref{lipinbtl}, we need the
following basic and useful inequality.

\begin{lemma}
Let $\theta\in (0,1]$. It then holds true that, for
any $\{a_j\}_{j\in\nn} \subset \mathbb{C}$,
\begin{equation}\label{r}
\left(\sum_{j=1}^\infty|a_j|
\right)^\theta\leq\sum_{j=1}^\infty|a_j|^\theta.
\end{equation}
\end{lemma}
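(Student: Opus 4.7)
The plan is to reduce the infinite-sum inequality to the two-term subadditivity estimate $(a+b)^\theta\le a^\theta+b^\theta$ for $a,b\in[0,\infty)$ and $\theta\in(0,1]$, then to iterate across partial sums and pass to the limit. The only genuinely nontrivial ingredient is the two-term bound, which itself rests on the elementary fact that $t^\theta\ge t$ for $t\in[0,1]$ whenever $\theta\in(0,1]$.

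First I would dispatch the two-term inequality. Assuming $a+b>0$ (otherwise there is nothing to prove), divide both sides by $(a+b)^\theta$ and set $x:=a/(a+b)\in[0,1]$, so that $b/(a+b)=1-x$; the desired inequality becomes $x^\theta+(1-x)^\theta\ge 1$. Since $\theta\le 1$, each of $x$ and $1-x$ lies in $[0,1]$, so $x^\theta\ge x$ and $(1-x)^\theta\ge 1-x$, and summing gives $x^\theta+(1-x)^\theta\ge x+(1-x)=1$, as required.

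Next, a straightforward induction on $N\in\nn$ promotes this to the finite-sum statement $\bigl(\sum_{j=1}^N|a_j|\bigr)^\theta\le\sum_{j=1}^N|a_j|^\theta$: the case $N=1$ is trivial, and in the inductive step one applies the two-term bound to $\sum_{j=1}^{N}|a_j|+|a_{N+1}|$ and uses the inductive hypothesis on the first summand. Finally, to conclude \eqref{r} I would pass to the limit: for every $N$, the partial sum $S_N:=\sum_{j=1}^N|a_j|$ satisfies $S_N^\theta\le\sum_{j=1}^N|a_j|^\theta\le\sum_{j=1}^\infty|a_j|^\theta$, and since $x\mapsto x^\theta$ is continuous and nondecreasing on $[0,\infty]$ (with the convention $\infty^\theta=\infty$), letting $N\to\infty$ yields the claim in both the convergent and divergent cases. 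There is no real obstacle; the lemma is an elementary concavity/subadditivity fact, recorded here only so it can be cited cleanly in the arguments that follow.
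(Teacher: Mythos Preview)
Your proof is correct and complete. The paper itself does not supply a proof of this lemma; it simply records the inequality as a basic and well-known fact, so your elementary argument via two-term subadditivity, induction, and passage to the limit fills in exactly what the paper omits.
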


The following lemma contains  some basic and
very useful estimates
related to $d$ and $\mu$ on a space
$(\cx,d,\mu)$ of homogeneous type.
One can find the details in \cite[Lemma 2.1]
{hmy08} or \cite[Lemma 2.4]{hlyy}.

\begin{lemma}\label{6.15.1}
Let $\beta,\ \gamma\in(0,\infty)$.
\begin{enumerate}
\item[{\rm(i)}] For any $x,\ y\in \cx$ and $r \in (0, \infty)$,
$V(x, y)\sim V (y, x)$ and
$$V_r(x) + V_r(y) + V (x,y) \sim V_r(x) + V (x,y)
\sim V_r(y) + V (x,y) \sim \mu(B(x,r + d(x,y)));$$
moreover, if $d(x,y)\leq r$, then $V_r(x)\sim V_r(y)$.
Here the positive equivalence  constants are
independent of $x$, $y$, and $r$.
\item[{\rm(ii)}] There exists a positive constant
$C$ such that,
for any $x_1 \in \cx$ and $r \in (0, \fz)$,
$$
\int_\cx \frac{1}{V_r(x_1)+V(x_1,y)}\left[\frac{r}
{r+d(x_1,y)}\right]^\gamma\,d\mu(y)\leq C.
$$
\item[{\rm(iii)}] There exists a positive constant
$C$ such that, for any $x \in \cx $ and $R \in (0, \fz)$,
$$\int_{\{z\in \cx:\ d(x,z)\leq R\}}\frac{1}{V(x,y)}
\left[\frac{d(x,y)}{R}\right]^\beta\,d\mu(y)\leq C$$
and
$$\quad \int_{\{z\in \cx:\ d(x,z)\geq R\}}
\frac{1}{V(x,y)}
\left[\frac{R}{d(x,y)}\right]^\beta\,d\mu(y)\leq C.$$
\item[{\rm(iv)}] There exists a positive constant
$C$ such that, for any $x_1\in\cx$ and $r,\
R\in (0,\fz)$,
$$\int_{\{x\in\cx:\ d(x_1,x)\geq R\}}\frac{1}
{V_r(x_1)+V(x_1,x)}
\left[\frac{r}{r+d(x_1,x)}
\right]^\gamma\,d\mu(x)\leq
C\left(\frac{r}{r+R}\right)^\gamma.$$
\end{enumerate}
\end{lemma}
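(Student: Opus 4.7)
The plan is to verify the four items using only the quasi-triangle inequality for $d$ and the doubling condition \eqref{eq-doub}; no further structural hypothesis on $(\cx,d,\mu)$ is needed.

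For (i), I will establish the three comparisons in turn. The symmetry $V(x,y)\sim V(y,x)$ uses that $B(y,d(x,y))\subset B(x,2A_0 d(x,y))$ by the quasi-triangle inequality, so doubling gives $V(y,x)\ls V(x,y)$, and swapping $x$ and $y$ gives the reverse. For the central equivalence, the bound $V_r(x)+V(x,y)\le 2\mu(B(x,r+d(x,y)))$ is immediate from monotonicity; the reverse uses $r+d(x,y)\le 2\max\{r,d(x,y)\}$, whence by doubling $\mu(B(x,r+d(x,y)))\ls V_r(x)\vee V(x,y)\le V_r(x)+V(x,y)$. Since $r+d(x,y)=r+d(y,x)$, applying the same estimate to $(y,x)$ and combining with the symmetry just established yields the equivalence with $V_r(y)+V(x,y)$. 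The clause $V_r(x)\sim V_r(y)$ for $d(x,y)\le r$ follows because $B(x,r)\subset B(y,2A_0 r)$ and then doubling.

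For (ii)--(iv) the common technique is dyadic annular decomposition around the distinguished point together with (i), which converts $V_r(x_1)+V(x_1,y)$ into $\mu(B(x_1,r+d(x_1,y)))$, a quantity essentially constant on each annulus. For (ii), split $\cx$ into $B(x_1,r)$ and the annuli $\{y:2^j r\le d(x_1,y)<2^{j+1}r\}$, $j\in\zz_+$. On $B(x_1,r)$ the integrand is $\ls [V_r(x_1)]^{-1}$, giving contribution $\ls 1$. On the $j$-th annulus, the integrand is $\ls 2^{-j\gamma}[\mu(B(x_1,2^j r))]^{-1}$, while the measure of the annulus is bounded by $\mu(B(x_1,2^{j+1}r))\ls\mu(B(x_1,2^j r))$ by doubling; hence each annulus contributes $\ls 2^{-j\gamma}$, and summing yields the constant bound since $\gamma>0$. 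For (iii), decompose $\{d(x,y)\le R\}$ into annuli $\{2^{-j-1}R\le d(x,y)<2^{-j}R\}$, $j\ge 0$, and $\{d(x,y)\ge R\}$ into $\{2^j R\le d(x,y)<2^{j+1}R\}$, $j\ge 0$; each contributes $\ls 2^{-j\beta}$ by the same device, with convergent geometric sum because $\beta>0$.

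For (iv), split according to whether $R\le r$ or $R>r$. In the first case $r/(r+R)\sim 1$, so the claim reduces to (ii). In the second case, on $\{d(x_1,x)\ge R\}$ we have $r+d(x_1,x)\sim d(x_1,x)\ge R>r$, so using (i) the integrand is comparable to $[r/d(x_1,x)]^\gamma[\mu(B(x_1,d(x_1,x)))]^{-1}$. Decomposing this set into the annuli $\{2^j R\le d(x_1,x)<2^{j+1}R\}$, $j\ge 0$, each contributes $\ls (r/(2^j R))^\gamma$ by doubling; summing yields $\ls (r/R)^\gamma\sim [r/(r+R)]^\gamma$. The main obstacle is essentially bookkeeping: one must apply the quasi-triangle inequality with constant $A_0$ consistently, carefully track doubling constants through the annular decompositions, and ensure that the geometric sums actually converge (which is where the strict positivity of $\beta$ and $\gamma$ enters). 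Notably, no reverse doubling or lower bound on $\mu$ is needed, in line with the paper's general philosophy.
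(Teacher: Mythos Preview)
Your proof sketch is correct and follows the standard route for these estimates. The paper itself does not supply a proof of this lemma but simply refers the reader to \cite[Lemma 2.1]{hmy08} and \cite[Lemma 2.4]{hlyy}; the arguments there are precisely the quasi-triangle plus doubling plus dyadic annular decomposition that you outline, so your approach matches the cited proofs essentially step for step.
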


Now, we show Proposition \ref{lipinbtl}.

\begin{proof}[Proof of Proposition \ref{lipinbtl}]
We first prove (i). We claim that, for any given
$\beta,\ \gamma\in(0,\eta)$,
$s\in(0,\beta\wedge\gamma)$, $p\in(\omega/
[\omega+(\beta\wedge\gamma)],\infty]$,
and $q\in(0,\infty]$,
\begin{equation}\label{lipb-d}
\lipsb\subset (\cgg)'.
\end{equation}
Indeed, assume that $f\in\dot{L}_b(s,p,\infty;
\cx)$ and $x_1\in\cx$ is a fixed point.
Then $f$ is finite for almost every $x\in\cx$.
Without loss of generality,
we may assume that, for any $x\in\cx$,
$f(x)<\infty$. To simplify the presentation of
this proof, here and thereafter, for any
$k\in\zz$ and $x\in\cx$, let
\begin{equation}\label{jkf}
J_k(f;x):=\frac{1}{\mu(B(x,
\widetilde{C}\delta^{-k}))}
\int_{B(x,\widetilde{C}\delta^{-k})}|f(x)-f(y)|
\,d\mu(y).
\end{equation}
For any $g\in\cgg$, by Lemma \ref{6.15.1}, we
have, for any $x\in\cx$,
\begin{align}\label{4.2x}
\left|\int_{\cx}f(y)g(y)\,d\mu(y)\right|
&=\left|\int_{\cx}[f(x)-f(y)]g(y)\,d\mu(y)
\right|\\
&\lesssim \|g\|_{\cgg}\int_{\cx}|f(x)-f(y)|
R_\gamma(x_1,y;0)\,d\mu(y)\notag\\
&\lesssim\|g\|_{\cgg}\sum_{j=0}
^\infty\delta^{j\gamma}J_j(f;x),\notag
\end{align}
where $R_\gamma(x_1,y;0)$ is as in \eqref{rxy}.
When $p\in(\omega/[\omega+
(\beta\wedge\gamma)],1]$,
from \eqref{4.2x}, \eqref{r}, and
$s\in(0,\beta\wedge\gamma)$,
we deduce that
\begin{align*}
\left|\int_{\cx}f(y)g(y)\,d\mu(y)\right|
&=\left\{\frac{1}{\mu(B(x_1,1))}
\int_{B(x_1,1)}\left|\int_{\cx}[f(x)-f(y)]g(y)
\,d\mu(y)\right|^p\,d\mu(x)\right\}^{1/p}\\
&\lesssim\|g\|_{\cgg}\left\{\sum_{j=0}^\infty
\delta^{j\gamma p}\int_{\cx}\left[J_j(f;x)\right]^p\,d\mu(x)\right\}^{1/p}\\
&\lesssim\|g\|_{\cgg}\|f\|_{\dot{L}
_b(s,p,\infty;\cx)};
\end{align*}
while, when $p\in(1,\infty]$, by the Minkowski
inequality, \eqref{4.2x}, and
$s\in(0,\beta\wedge\gamma)$, we have
\begin{align*}
\left|\int_{\cx}f(y)g(y)\,d\mu(y)\right|
&\lesssim\|g\|_{\cgg}\sum_{j=0}^\infty
\delta^{j\gamma}\left\{\int_{\cx}\left[J_j(f;x)\right]^p\,d\mu(x)\right\}^{1/p}\\
&\lesssim\|g\|_{\cgg}\|f\|_{\dot{L}
_b(s,p,\infty;\cx)}.
\end{align*}
By this and Proposition \ref{pro-lipi}(ii), we
conclude that \eqref{lipb-d} holds true.

Now, we prove that $\lipsb\subset\hb$ for any
given $\beta,\ \gamma\in(0,\eta)$,
$s\in(0,\beta\wedge\gamma)$, $p\in(\omega/
[\omega+(\beta\wedge\gamma)],\infty]$,
and $q\in(0,\infty]$. To this end, let
$f\in\lipsb$ and $\{Q_k\}_{k=-\infty}^\infty$
be an exp-ATI.
Then, by the cancellation and the size conditions of $Q_k$,
we have, for any $k\in\zz$ and $x\in\cx$,
\begin{align}\label{qfi}
|Q_kf(x)|&=\left|\int_{\cx}Q_k(x,y)[f(x)-f(y)]
\,d\mu(y)\right|\\
&\lesssim\int_{\cx}R_\gamma(x,y;k)|f(x)-f(y)|\,d\mu(y)
\lesssim \sum_{j=0}^\infty\delta^{j\gamma}
J_{j-k}(f;x),\notag
\end{align}
where $R_\gamma(x,y;k)$ is as in \eqref{rxy}.

If $p\in(\omega/[\omega+
(\beta\wedge\gamma)],1]$,
by \eqref{r} and \eqref{qfi}, we find that,
for any $k\in\zz$,
\begin{equation}\label{qfii}
\|Q_kf\|_{L^p(\cx)}\lesssim \left\{\sum_{j=0}
^\infty\delta^{j\gamma p}\int_{\cx}\left[J_{j-k}(f;x)\right]^p\,d\mu(x)\right\}^{1/p}.
\end{equation}
When $q/p\in(0,1]$, from \eqref{qfii}, \eqref{r},
and $s\in(0,\gamma)$, we deduce that
\begin{align}\label{blb1}
\|f\|_{\hb}&=\left\{\sum_{k=-\infty}
^\infty\delta^{-ksq}\|Q_kf\|_{L^p(\cx)}
^q\right\}^{1/q}\\
&\lesssim\left[\sum_{j=0}
^\infty\delta^{j\gamma p}
\sum_{k=-\infty}^\infty\delta^{-ksq}\left
\{\int_{\cx}
\left[J_{j-k}(f;x)\right]^p\,d\mu(x)\right\}^{q/p}
\right]^{1/q}\notag\\
&\lesssim\|f\|_{\lipsb}\left(\sum_{j=0}
^\infty\delta^{j(\gamma-s)p}\right)^{1/q}
\lesssim\|f\|_{\lipsb};\notag
\end{align}
while, when $q/p\in(1,\infty]$, by \eqref{qfii}
and the Minkowski inequality,
we conclude that
\begin{align}\label{blb2}
\|f\|_{\hb}&=\left\{\sum_{k=-\infty}
^\infty\delta^{-ksq}\|Q_kf\|_{L^p(\cx)}
^q\right\}^{1/q}\\
&\lesssim\left[\sum_{j=0}
^\infty\delta^{j\gamma p}
\left(\sum_{k=-\infty}^\infty\delta^{-ksq}\left\{\int_{\cx}
\left[J_{j-k}(f;x)\right]^p\,d\mu(x)\right\}
^{q/p}\right)^{p/q}\right]^{1/p}\notag\\
&\lesssim\|f\|_{\lipsb}.\notag
\end{align}

If $p\in(1,\infty]$, then, from \eqref{qfi} and the
Minkowski inequality,
we deduce that, for any $k\in\zz$,
\begin{equation}\label{qfiii}
\|Q_kf\|_{L^p(\cx)}\lesssim \sum_{j=0}
^\infty\delta^{j\gamma}\left\{\int_{\cx}
\left[J_{j-k}(f;x)\right]^p\,d\mu(x)\right\}^{1/p}.
\end{equation}
When $q\in(0,1]$, by \eqref{r} and \eqref{qfiii},
we obtain
\begin{align}\label{blb3}
\|f\|_{\hb}&=\left\{\sum_{k=-\infty}
^\infty\delta^{-ksq}\|Q_kf\|_{L^p(\cx)}
^q\right\}^{1/q}\\
&\lesssim\left[\sum_{j=0}
^\infty\delta^{j\gamma q}
\sum_{k=-\infty}^\infty\delta^{-ksq}\left
\{\int_{\cx}
\left[J_{j-k}(f;x)\right]^p\,d\mu(x)\right\}
^{q/p}\right]^{1/q}\notag\\
&\lesssim\|f\|_{\lipsb};\notag
\end{align}
while, when $q\in(1,\infty]$, by \eqref{qfiii} and
the Minkowski inequality, we conclude that
\begin{align*}
\|f\|_{\hb}\lesssim\sum_{j=0}
^\infty\delta^{j\gamma}
\left[\sum_{k=-\infty}^\infty\delta^{-ksq}
\left\{\int_{\cx}
\left[J_{j-k}(f;x)\right]^p\,d\mu(x)\right\}
^{q/p}\right]^{1/q}\lesssim\|f\|_{\lipsb}.
\end{align*}
This, together with \eqref{blb1}, \eqref{blb2},
and \eqref{blb3}, then finishes the proof of (i).

Next, we prove (ii). By Proposition
\ref{pro-lipii}(iii), we have, for any given $s,\ p$,
and $q$ as in Proposition \ref{lipinbtl}(ii),
$$\lipst \subset \dot{L}_b(s,p,\max\{p,q\};\cx).$$
Thus, from \eqref{lipb-d}, we deduce that
$\lipst\subset(\cgg)'$
with $\beta$, $\gamma$, $s$, $p$, and $q$ as in (ii).

Let $\{Q_k\}_{k=-\infty}^\infty$ be an exp-ATI
and $f\in\lipst$. Then,
by the cancellation and the size conditions of
$Q_k$, we find that, for any fixed
$\Gamma\in(0,\infty)$ and for any $k\in\zz$ and
$x\in\cx$,
\begin{align}\label{qfiv}
|Q_kf(x)|&=\left|\int_{\cx}[f(x)-f(y)]Q_k(x,y)
\,d\mu(y)\right|
\lesssim \int_{\cx}R_\Gamma(x,y;k)|f(x)-f(y)|\,d\mu(y)\\
&\lesssim \sum_{j=0}^\infty\delta^{j\Gamma}
J_{j-k}(f;x),\notag
\end{align}
where $R_\Gamma(x,y;k)$ is as in \eqref{rxy}.
If $q\in(\omega/[\omega+
(\beta\wedge\gamma)],1]$, using \eqref{r} and  \eqref{qfiv},
and choosing $\Gamma\in(\gamma,\infty)$,
we conclude that, for any $k\in\zz$ and
$x\in\cx$,
\begin{equation}\label{qf6}
|Q_kf(x)|\lesssim \left\{\sum_{j=0}
^\infty\delta^{j\gamma q}
\left[J_{j-k}(f;x)\right]^q\right\}^{1/q};
\end{equation}
while, when $q\in(1,\infty]$, using the H\"older inequality and
choosing $\Gamma\in(\gamma,\infty)$ again,
we have, for any $k\in\zz$ and $x\in\cx$,
\begin{align}\label{qf7}
|Q_kf(x)|\lesssim \left\{\sum_{j=0}
^\infty\delta^{j\gamma q}
\left[J_{j-k}(f;x)\right]^q\right\}^{1/q}
\left[\sum_{j=0}^\infty\delta^{j(\Gamma-
\gamma)q'}\right]^{1/q'}
\lesssim \left\{\sum_{j=0}
^\infty\delta^{j\gamma q}
\left[J_{j-k}(f;x)\right]^q\right\}^{1/q}.
\end{align}
Combining the above two estimates,
we conclude that,
for any given $q\in(\omega/[\omega+
(\beta\wedge\gamma)],\infty]$
and for any $k\in\zz$ and $x\in\cx$,
\begin{equation*}
|Q_kf(x)|\lesssim \left\{\sum_{j=0}
^\infty\delta^{j\gamma q}
\left[J_{j-k}(f;x)\right]^q\right\}^{1/q}.
\end{equation*}

Now, we consider the following two cases.

{\it Case 1)} $p\in(\omega/[\omega+
(\beta\wedge\gamma)],\infty)$.
In this case, if $p/q\in(0,1]$, then, by \eqref{qf6},
\eqref{qf7}, \eqref{r}, and $s\in(0,\gamma)$, we obtain
\begin{align}\label{tllip1}
\|f\|_{\hf}&=\left\|\left[\sum_{k=-\infty}
^\infty\delta^{-ksq}|Q_kf|^q\right]^{1/q}\right\|
_{L^p(\cx)}\\
&\lesssim \left(\int_{\cx}\left\{\sum_{k=-
\infty}^\infty\delta^{-ksq}\sum_{j=0}
^\infty\delta^{j\gamma q}
\left[J_{j-k}(f;x)
\right]^q\right\}^{p/q}\,d\mu(x)\right)^{1/p}\notag\\
&\lesssim \left(\int_{\cx}\sum_{j=0}
^\infty\delta^{j\gamma p}\left\{\sum_{k=-
\infty}^\infty\delta^{-ksq}
\left[J_{j-k}(f;x)
\right]^q\right\}^{p/q}\,d\mu(x)\right)^{1/p}\notag\\
&\sim \left(\sum_{j=0}^\infty\delta^{j\gamma
p}\left\|\left\{\sum_{k=-\infty}^\infty\delta^{-ksq}
\left[J_{j-k}(f;\cdot)
\right]^q\right\}^{1/q}\right\|_{L^p(\cx)}
^p\right)^{1/p}\notag\\
&\lesssim \|f\|_{\lipst};\notag
\end{align}
while, if $p/q\in(1,\infty)$, by \eqref{qf6},
\eqref{qf7}, the Minkowski inequality,
and an argument similar to that used in the
estimation of \eqref{tllip1}, we have
\begin{align}\label{tllip2}
\|f\|_{\hf}&\lesssim \left(\sum_{j=0}
^\infty\delta^{j\gamma q}\left\|
\left\{\sum_{k=-\infty}^\infty\delta^{-ksq}
\left[J_{j-k}(f;\cdot)
\right]^q\right\}^{1/q}\right\|_{L^p(\cx)}
^q\right)^{1/q}
\lesssim \|f\|_{\lipst},
\end{align}
which, combined with \eqref{tllip1}, then completes
the proof of (ii) in this case.

{\it Case 2)} $p=\infty$. In this case,
by \eqref{qf6} and \eqref{qf7},
we find that, for any $l\in\zz$ and $x\in\cx$,
\begin{align}\label{tlinfty}
\sum_{k=l}^\infty\delta^{-ksq}|Q_kf(x)|^q
\lesssim \sum_{j=0}^\infty\delta^{j\gamma q}
\sum_{k=j}^\infty\delta^{-ksq}\left[J_{j-k}(f;x)\right]^q
\lesssim\|f\|_{\dot{L}_t(s,\infty,q;\cx)}^q,
\end{align}
which implies that, for any $l\in\zz$ and
$\alpha\in\ca_{l}$,
$$\frac{1}{\mu(Q_\alpha^l)}\int_{Q_\alpha^l}
\sum_{k=l}^\infty\delta^{-ksq}|Q_kf(x)|
^q\,d\mu(x)\lesssim \|f\|_{\dot{L}_t(s,\infty,q;\cx)}^q.$$
Thus, $f\in\hfi$ and $\|f\|_{\hfi}\lesssim\|f\|
_{\dot{L}_t(s,\infty,q;\cx)}$, which
completes the proof of (ii) in this case.

Combining with the two above cases, we complete
the proof of (ii) and hence of Proposition \ref{lipinbtl}.
\end{proof}

To establish the contrary of Proposition \ref{lipinbtl},
we need the following notion of lower bounds;
see, for instance, \cite[Definition 1.1]{hhhlp20} and \cite{agh20}.

\begin{definition}\label{lower}
Suppose that $(\cx, d, \mu)$ is a space of
homogeneous type with upper dimension
$\omega$ as in \eqref{eq-doub}.
The measure $\mu$ is said to have a
\emph{lower bound} $Q$ with $Q\in(0,\omega]$, if
there is a positive constant $C$ such that, for
any $x\in\cx$ and $r\in(0,\infty)$,
\begin{equation}\label{lb}
\mu(B(x,r))\geq Cr^Q.
\end{equation}
\end{definition}

\begin{remark}\label{lowerx}
\begin{itemize}
\item[(i)] We point out that, in Definition \ref{lower},
the restriction $Q\in(0,\omega]$ is necessary.
Indeed, fix a point $x_0\in\cx$. Then,
by \eqref{eq-doub} and \eqref{lb},
we find that, for any
$r\in[1,\infty)$,
$$
r^Q\lesssim \mu(B(x_0,r))\lesssim r^\omega\mu(B(x_0,1)),
$$
which implies that $r^{Q-\omega}\lesssim 1$.
Letting $r\to\infty$, we then obtain $Q\leq \omega$,
which explains the reasonability of the above restriction on $Q$.
\item[(ii)] Observe that the lower bound $Q$ of $\cx$ in
\cite[Definition 1.1]{hhhlp20} is directly restricted to the same as
the upper dimension $\omega$ of $\cx$.
\end{itemize}
\end{remark}

Now, we establish the inverse of Proposition \ref{lipinbtl}.

\begin{proposition}\label{btlinlip}
Let $\eta$ be as in Definition \ref{10.23.2}, $
\omega$ as in \eqref{eq-doub},
$\beta,\ \gamma\in(0,\eta)$, and
$s\in(0,\beta\wedge\gamma)$.
\begin{enumerate}
\item[{\rm(i)}] If $p\in[1,\infty]$
and $q\in(0,\infty]$, then $\hb\subset\lips$;
\item[{\rm(ii)}] If $p\in(\omega/(\omega+s),1)$
satisfies $-\eta<s-\omega/p$,
$q\in(0,\infty]$, and $\cx$ has a lower bound $\omega$,
then $\hb\subset\lips$;
\item[{\rm(iii)}] If $p\in(1,\infty)$ and
$q\in(1,\infty]$,
then $\hf\subset\lipst$;
\item[{\rm(iv)}] If $p\in(\omega/(\omega+s),1]$
satisfies $-\eta<s-\omega/p$,
$q\in(\omega/[\omega+
(\beta\wedge\gamma)],\infty]$, and $\cx$ has
a lower bound $\omega$,
then $\hf\subset\lipstu$ with $u\in (0,\min\{p,q\})$.
\end{enumerate}
\end{proposition}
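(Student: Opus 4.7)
The proof strategy is uniform across the four parts: for $f$ in the homogeneous Besov or Triebel--Lizorkin space, apply the continuous Calder\'on reproducing formula in Lemma \ref{h_c_crf} to write, for any $x, y \in \cx$,
$$
f(x) - f(y) = \sum_{j \in \zz} \int_\cx \left[\widetilde{Q}_j(x, z) - \widetilde{Q}_j(y, z)\right] Q_j f(z)\, d\mu(z).
$$
For $y \in B(x, \widetilde{C}\delta^k)$, I split the $j$-sum into $j \leq k$ and $j > k$. When $j \leq k$, the relation $d(x,y) \le \widetilde{C}\delta^k \le \widetilde{C}\delta^j$ allows the regularity bound \eqref{4.23y} of $\widetilde{Q}_j$ to be applied, yielding an extra factor $\delta^{(k-j)\beta}$; when $j > k$, only the size estimate \eqref{4.23x} is used, applied separately to $\widetilde{Q}_j(x,z)$ and $\widetilde{Q}_j(y,z)$. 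In either case the kernel integral $\int_\cx R_\gamma(x,z;j)|Q_j f(z)|\,d\mu(z)$ is pointwise dominated by $M(Q_j f)(x)$ via Lemma \ref{6.15.1}; when $p \leq 1$, the $L^p$-boundedness of $M$ is unavailable, so the lower bound $\omega$ of $\mu$ is invoked to obtain the substitute pointwise bound $[M(|Q_j f|^r)(x)]^{1/r}$ for some $r \in (0, p)$ with $r > \omega/(\omega + \beta \wedge \gamma)$.

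For parts (i) and (ii), integrating $|f(x) - f(y)|^p$ first over $y \in B(x, \widetilde{C}\delta^k)$ and then over $x$, and applying Minkowski's inequality (when $p \ge 1$) or the subadditivity $(\sum_j |a_j|)^p \le \sum_j |a_j|^p$ (when $p < 1$), leads to a discrete-convolution-type bound
$$
\delta^{-ks}\,[A_k(f)]^{1/p} \ls \sum_{j \in \zz} \delta^{|j-k|\varepsilon_0}\,\delta^{-js}\,\|Q_j f\|_{L^p(\cx)},
$$
where $A_k(f)$ denotes the inner double integral in $\|f\|_{\lips}$ and $\varepsilon_0 := \min\{\beta - s,\, s\} > 0$. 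Taking $\ell^q$-norm in $k$ via Young's inequality (for $q > 1$) or $\ell^q$-subadditivity (for $q \le 1$) then yields $\|f\|_{\lips} \ls \|f\|_{\hb}$. In (ii), the $M(|Q_j f|^r)^{1/r}$ substitute combined with the boundedness of $M$ on $L^{p/r}$ (valid because $r < p$) replaces the failed $L^p$-bound of $M$.

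For parts (iii) and (iv), the same pointwise analysis is carried out under the $L^p$-norm. In (iii), with $p \in (1, \infty)$ and $q \in (1, \infty]$, the Fefferman--Stein vector-valued maximal inequality on $L^p(\ell^q)$ handles the sum after the $j$-convolution step. For (iv) with $p \le 1$, one first raises everything to the $u$-th power with $u \in (0, \min\{p, q\})$, substitutes $[M(|Q_j f|^r)]^{1/r}$ for the kernel integrals with $r \in (0, u)$, and then applies the Fefferman--Stein inequality on $L^{p/u}(\ell^{q/u})$, which is valid precisely because the choice of $u$ forces both $p/u$ and $q/u$ to exceed $1$. This is the reason for introducing the extra parameter $u$ into $\lipstu$ and explains why $\lipstu$ (rather than $\lipst$) appears in the statement of (iv).

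The principal obstacle lies in parts (ii) and (iv): one must choose $r < u$ together with $\beta, \gamma \in (0, \eta)$ so that simultaneously (a) $r > \omega/(\omega + \beta \wedge \gamma)$, making the lower-bound kernel-to-maximal-function reduction valid, (b) $\beta u > s$, ensuring that the large-scale series converges after raising to the $u$-th power, and (c) $p/u,\, q/u > 1$, which is required for Fefferman--Stein in (iv). The hypotheses $p > \omega/(\omega + s)$ and $-\eta < s - \omega/p$ are precisely what render such a joint admissible choice of $(r, u, \beta, \gamma)$ feasible.
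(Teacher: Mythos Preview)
Your overall strategy---apply the Calder\'on reproducing formula, split the $j$-sum at scale $k$, use the regularity of $\widetilde{Q}_j$ for $j\le k$ and the size condition for $j>k$, and then close via maximal-function or Fefferman--Stein estimates---is essentially the paper's approach (the paper uses the discrete formula in Lemma~\ref{crf} rather than the continuous one, and for the Besov parts relies on the direct kernel sums of Lemma~\ref{9.14.1} instead of $M$, but these are minor variants).

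There is, however, a genuine gap in your treatment of (ii) and (iv): you misidentify where the lower bound hypothesis is used. The pointwise inequality
\[
\int_\cx R_\gamma(x,z;j)\,|Q_jf(z)|\,d\mu(z)\ \lesssim\ \bigl[M(|Q_jf|^r)(x)\bigr]^{1/r},
\qquad r\in\bigl(\omega/(\omega+\gamma),1\bigr],
\]
holds on \emph{every} space of homogeneous type; it is a consequence of the doubling condition alone (this is exactly Lemma~\ref{10.18.5} in the discrete setting, and the continuous analogue is proved the same way). No lower bound on $\mu$ is needed here---the $\omega$ appearing in the threshold $\omega/(\omega+\gamma)$ is the upper dimension from \eqref{eq-doub}.

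The lower bound enters at a point you have glossed over: when $p<1$, an element $f\in\hb$ or $f\in\hf$ is a priori only a distribution in $(\cgg)'$, and writing ``$f(x)-f(y)$'' requires first showing that $f$ agrees almost everywhere (modulo additive constants on each ball) with a locally integrable function. In the paper this is handled by invoking the embedding $\hb\subset\dot{B}^{\,s-\omega(1/p-1)}_{1,q}(\cx)$ of Lemma~\ref{embed} (and its Triebel--Lizorkin analogue), which \emph{does} require the lower bound $\omega$; once $f$ lies in a Besov space with integrability index $1$ and positive smoothness, the reproducing-formula series is shown to converge in $L^1_{\mathrm{loc}}$ to $f$, and only then can one compute $f(x)-f(y)$ pointwise. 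Your outline skips this identification step and attributes the lower bound to the wrong inequality, so as written the argument for (ii) and (iv) is incomplete.
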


To show Proposition \ref{btlinlip}, we need
several technical lemmas.
Han et al. \cite[Theorem 1.3]{hhhlp20} obtained
a necessary and sufficient condition for the
validity of the embedding theorems for Besov and
Triebel--Lizorkin spaces on spaces of homogeneous type.
The following lemma comes from the
combination of \cite[Theorem 1.3]{hhhlp20} and
\cite[Theorem 3.2]{hwyy20}.

\begin{lemma}\label{embed}
Let $\eta$ be as in Definition \ref{10.23.2},
$\omega$ as in \eqref{eq-doub},
$\beta,\ \gamma\in(0,\eta)$, $s\in(0,
\beta\wedge\gamma)$, and $p\in (\omega/
(\omega+s),1]$ satisfy $-\eta<s-\omega/p$.
Assume that $\cx$ has a lower bound
$\omega$.
\begin{enumerate}
\item[{\rm (i)}] If $q\in (0,\infty]$, then
$$\dot{B}_{p,q}^s(\cx)\subset \dot{B}_{1,q}^{s-
\omega(\frac{1}{p}-1)}(\cx).$$
\item[{\rm (ii)}] If $q\in (\omega/[\omega+
(\beta\wedge\gamma)],\infty]$, then
$$\dot{F}_{p,q}^s(\cx)\subset \dot{F}_{1,q}^{s-
\omega(\frac{1}{p}-1)}(\cx).$$
\end{enumerate}
\end{lemma}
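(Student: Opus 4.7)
The plan is to derive both embeddings from the discrete Calder\'on reproducing formula (Lemma \ref{crf}) combined with the lower-bound hypothesis $\mu(B(x,r))\geq Cr^\omega$, following the template of \cite{hhhlp20,hwyy20}. First, I would apply Lemma \ref{crf} to represent $f$ in terms of its values $Q_kf(\ya)$, then act by $Q_{k'}$ and invoke almost-orthogonality for the composition $Q_{k'}\widetilde{Q}_k$ to obtain a decay factor $\delta^{|k-k'|\eta'}$ for some regularity exponent $\eta'$ that can be chosen with $s-\omega(1/p-1)<\eta'<\eta$, together with a tail kernel of the form $R_{\gamma'}(\cdot,\ya;k\wedge k')$.

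The central step, which uses the lower-bound assumption in a decisive way, converts each discrete sum over the dyadic points $\{\ya\}$ into an $\lp$ norm. Since $p\le 1$, the elementary inequality $\sum_i|c_i|\le(\sum_i|c_i|^p)^{1/p}$ combined with $\mu(\qa)\gs\delta^{k\omega}$ (from Lemma \ref{10.22.1}(iii) and \eqref{lb}), after factoring $\mu(\qa)^{p-1}\ls\delta^{-k\omega(1-p)}$, gives
\begin{equation*}
\sum_{\alpha,m}\mu(\qa)\,|Q_kf(\ya)|\ls \delta^{-k\omega(1/p-1)}\left[\sum_{\alpha,m}\mu(\qa)\,|Q_kf(\ya)|^p\right]^{1/p}.
\end{equation*}
The standard equivalence $[\sum_{\alpha,m}\mu(\qa)|Q_kf(\ya)|^p]^{1/p}\sim\|Q_kf\|_\lp$, valid because $Q_kf$ is essentially constant at scale $\delta^k$ via the kernel smoothness in Definition \ref{10.23.2}(iii), then yields the crucial bound $\sum_{\alpha,m}\mu(\qa)|Q_kf(\ya)|\ls \delta^{-k\omega(1/p-1)}\|Q_kf\|_\lp$.

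For (i), plugging this into the reconstruction of $Q_{k'}f$ and taking $L^1(\cx)$ norms produces
\begin{equation*}
\|Q_{k'}f\|_{L^1(\cx)}\ls\sum_{k\in\zz}\delta^{|k-k'|\eta'}\delta^{-k\omega(1/p-1)}\|Q_kf\|_\lp;
\end{equation*}
multiplying by $\delta^{-k'[s-\omega(1/p-1)]}$ and applying Young's convolution inequality in $\ell^q(\zz)$ (the kernel in $k-k'$ decays geometrically thanks to $\eta'>s-\omega(1/p-1)$, which is precisely the meaning of the hypothesis $-\eta<s-\omega/p$) yields the Besov embedding. For (ii), the argument is parallel but keeps the pointwise dependence on $x$: I would bound the inner sum over $(\alpha,m)$ by the Hardy--Littlewood maximal function of $|Q_kf|^r$ for some $r\in(\omega/(\omega+\gamma'),\,p\wedge q)$, again paying the scale factor $\delta^{-k\omega(1/r-1)}$ from the lower bound, and then invoke the Fefferman--Stein vector-valued maximal inequality on $L^{p/r}(\ell^{q/r};\cx)$, whose hypotheses are guaranteed by $p,q>\omega/[\omega+(\beta\wedge\gamma)]$.

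The main obstacle is the simultaneous selection of the auxiliary parameters $\eta'$, $\gamma'$, and $r$: one needs $\eta'>s-\omega(1/p-1)$ for geometric summation in $k-k'$; $\gamma'$ must be large enough for the kernel tails $R_{\gamma'}$ to survive the discrete summation; and $r$ must lie in $(\omega/(\omega+\gamma'),\,p\wedge q)$ for the vector-valued maximal inequality. The constraints $p>\omega/(\omega+s)$, $s<\beta\wedge\gamma$, $-\eta<s-\omega/p$, and (in the Triebel--Lizorkin case) $q>\omega/[\omega+(\beta\wedge\gamma)]$ are exactly what open a nonempty window for this choice, which is the content of \cite[Theorem 1.3]{hhhlp20} and \cite[Theorem 3.2]{hwyy20}.
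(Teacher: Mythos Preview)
The paper does not prove this lemma; it simply quotes it as the combination of \cite[Theorem~1.3]{hhhlp20} and \cite[Theorem~3.2]{hwyy20}. Your sketch for part~(i) is correct and is indeed the mechanism behind those references: the lower bound $\mu(\qa)\gtrsim\delta^{k\omega}$ converts the $\ell^p$-sum of the wavelet coefficients into an $\ell^1$-sum at the price of $\delta^{-k\omega(1/p-1)}$, and the almost-orthogonality decay $\delta^{|k-k'|\eta'}$ together with Young's inequality in $\ell^q(\zz)$ closes the argument, the hypothesis $-\eta<s-\omega/p$ being precisely what makes the convolution kernel summable.

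For part~(ii), however, your sketch has a genuine gap. Bounding the inner sum by $[M(|Q_kf|^r)(x)]^{1/r}$ via Lemma~\ref{10.18.5} and then applying Fefferman--Stein on $L^{p/r}(\ell^{q/r})$ only returns an $L^p$ norm on the right-hand side; it does \emph{not} produce an $L^1$ norm on the left. The Fefferman--Stein inequality is an $L^t\to L^t$ estimate and cannot by itself change the integrability exponent. Moreover, the ``scale factor $\delta^{-k\omega(1/r-1)}$ from the lower bound'' you invoke does not arise in Lemma~\ref{10.18.5}, which holds with no lower-bound assumption; the factor there is $\delta^{[k-(k\wedge k')]\omega(1-1/r)}$, coming from the mismatch of scales, not from \eqref{lb}. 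The actual passage from $\dot F^s_{p,q}$ to $\dot F^{s-\omega(1/p-1)}_{1,q}$ in \cite{hhhlp20} goes through a sequence-space embedding (their Proposition~3.1, as the present paper notes in the remark following Lemma~\ref{embed}) that exploits the lower bound at the level of the discrete Triebel--Lizorkin norms, essentially a Plancherel--P\'olya/sub-mean-value argument on the characteristic functions $\mathbf 1_{\qa}$, rather than a maximal-function bound. Your outline would need to be rerouted through that sequence-space step to become a proof of~(ii).
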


\begin{remark} We point that the lower bound of $\cx$
in Proposition \ref{btlinlip} and
Lemma \ref{embed} is required to be
the same as the upper dimension $\omega$. It is still
unclear whether or not the conclusions
of Proposition \ref{btlinlip} and Lemma \ref{embed}
still hold true if the lower bound $Q$ of $\cx$ belongs to $(0,\omega)$.
Indeed, we prove Proposition \ref{btlinlip}
by using Lemma \ref{embed}, while
Lemma \ref{embed} strongly depends on
\cite[Proposition 3.1]{hhhlp20}
which needs $Q=\omega$. But, it is easy
to check that \cite[Proposition 3.1]{hhhlp20}
still holds true when $Q\in[\omega,\infty)$,
which, together with Remark \ref{lowerx}(i),
results in the restriction of Lemma \ref{embed} and hence Proposition \ref{btlinlip}
on the lower bound of $\cx$.
\end{remark}

The following two technical lemmas play a very
important role, respectively, in
dealing with Besov and Triebel--Lizorkin
spaces on spaces of homogeneous type.
Lemma \ref{9.14.1} comes from \cite[Lemma 3.13]{whhy}.

\begin{lemma}\label{9.14.1}
Let $\gamma \in (0,\infty)$ and $p \in
(\omega/(\omega+\gamma),1]$ with $\omega$ as in
\eqref{eq-doub}. Then there exists a positive
constant $C$ such that,
for any $k,\ k' \in \zz$, $x \in \mathcal{X}$,
and  $\ya \in \qa$ with
$\alpha \in \ca_k$ and $m\in\{1,\dots,N(k,\alpha)\}$,
\begin{align*}
C^{-1}[V_{\delta^{k\wedge k'}}(x)]^{1-p}
&\le\sum_{\alpha \in \ca_k}\sum_{m=1}^{N(k,
\alpha)}\mu\left(\qa\right)
\left[\frac{1}{V_{\delta^{k\wedge k'}}(x)+V(x,\ya)}\right]^p
\left[\frac{\delta^{k\wedge k'}}
{\delta^{k\wedge k'}
+d(x,\ya)}\right]^{\gamma p}\\
& \leq C[V_{\delta^{k\wedge k'}}(x)]^{1-p}.\noz
\end{align*}
\end{lemma}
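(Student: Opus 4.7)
The plan is to set $r:=\delta^{k\wedge k'}$ and decompose the sum dyadically according to $d(x,\ya)$ relative to $r$. Note first that, since $k+j_0>k\wedge k'$, Lemma \ref{10.22.1}(iii) implies $\diam\qa\lesssim\delta^{j_0}r$, which can be made as small as we wish relative to $r$; in particular, we may assume $\diam\qa\le r/(4A_0)$. This geometric observation ensures that the cubes $\qa$ fit cleanly inside the annular regions we shall use.

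For the upper bound, I would partition the index set via
$$A_0:=\left\{(\alpha,m):\ d(x,\ya)\le r\right\},\quad A_j:=\left\{(\alpha,m):\ 2^{j-1}r<d(x,\ya)\le 2^jr\right\}\ (j\ge 1).$$
For $(\alpha,m)\in A_j$, Lemma \ref{6.15.1}(i) yields $V_r(x)+V(x,\ya)\sim\mu(B(x,(2^j\vee 1)r))$, and plainly $[r/(r+d(x,\ya))]^{\gamma p}\lesssim 2^{-j\gamma p}$. Since the cubes $\qa$ are pairwise disjoint by Lemma \ref{10.22.1}(i) and each is contained in $B(x,C(2^j\vee 1)r)$ by the quasi-triangle inequality and the smallness of $\diam\qa$, we obtain $\sum_{A_j}\mu(\qa)\lesssim\mu(B(x,(2^j\vee 1)r))$. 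Combining these three estimates,
$$\sum_{(\alpha,m)\in A_j}\mu(\qa)\left[V_r(x)+V(x,\ya)\right]^{-p}\left[\frac{r}{r+d(x,\ya)}\right]^{\gamma p}\lesssim\mu(B(x,(2^j\vee 1)r))^{1-p}\cdot 2^{-j\gamma p}.$$
Because $p\le 1$, the doubling property \eqref{eq-doub} then gives $\mu(B(x,2^jr))^{1-p}\le 2^{j\omega(1-p)}V_r(x)^{1-p}$, so the $A_j$-contribution is $\lesssim 2^{-j[p\gamma-\omega(1-p)]}V_r(x)^{1-p}$. Summing the geometric series in $j\ge 0$ converges precisely under the hypothesis $p>\omega/(\omega+\gamma)$, and yields the desired upper bound.

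For the lower bound, I would restrict attention to the set $I:=\{(\alpha,m):\ \qa\cap B(x,r/(4A_0))\neq\emptyset\}$. For $(\alpha,m)\in I$, the smallness $\diam\qa\le r/(4A_0)$ and the quasi-triangle inequality force $\qa\subset B(x,r/2)$, hence $d(x,\ya)<r/2$; thus Lemma \ref{6.15.1}(i) gives $V_r(x)+V(x,\ya)\sim V_r(x)$ and $[r/(r+d(x,\ya))]^{\gamma p}\sim 1$. Since $B(x,r/(4A_0))\subset\bigcup_{(\alpha,m)\in I}\qa$ by Lemma \ref{10.22.1}(i), the total measure satisfies $\sum_{I}\mu(\qa)\ge\mu(B(x,r/(4A_0)))\sim V_r(x)$ by doubling. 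Inserting these estimates produces a lower bound of the form $V_r(x)\cdot V_r(x)^{-p}\cdot 1=V_r(x)^{1-p}$.

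The main obstacle is sharpness in the upper bound: one cannot simply lower-bound $V_r(x)+V(x,\ya)$ by $V_r(x)$, as this forces the much stronger condition $p>\omega/\gamma$. Recovering the sharp hypothesis $p>\omega/(\omega+\gamma)$ requires exploiting the cancellation between the factors $[V_r(x)+V(x,\ya)]^{-p}\sim\mu(B(x,2^jr))^{-p}$ and $\sum_{A_j}\mu(\qa)\lesssim\mu(B(x,2^jr))$, producing the combined power $\mu(B(x,2^jr))^{1-p}$ \emph{before} invoking doubling. This is exactly where the geometry of the dyadic cube system does the essential work; verifying the cubes indeed fill the relevant annuli (and do not overflow them) is the delicate verification that justifies the cancellation.
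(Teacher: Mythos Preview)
Your argument is correct. The paper does not supply its own proof of this lemma; it simply attributes the result to \cite[Lemma~3.13]{whhy}. Your dyadic-annulus decomposition is precisely the standard route to such estimates, and you have identified the key mechanism correctly: in the upper bound one must first combine $\sum_{A_j}\mu(\qa)\lesssim\mu(B(x,2^jr))$ with $[V_r(x)+V(x,\ya)]^{-p}\sim\mu(B(x,2^jr))^{-p}$ to produce the exponent $1-p$ \emph{before} applying doubling, which is exactly what yields the sharp threshold $p>\omega/(\omega+\gamma)$.

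One cosmetic point: your claim that $\diam\qa$ ``can be made as small as we wish relative to $r$'' is slightly off, since $j_0$ is a fixed constant in the paper's setup (chosen only so that $\delta^{j_0}\le(2A_0)^{-3}C_0$). What you actually have is $\diam\qa\lesssim\delta^{j_0}\delta^k\le\delta^{j_0}r$, i.e.\ $\diam\qa\le C_1 r$ for a fixed structural constant $C_1$ that need not be $\le 1/(4A_0)$. Fortunately your argument does not require arbitrary smallness: for the upper bound you only need $\qa\subset B(x,C'2^jr)$ when $d(x,\ya)\le 2^jr$, and for the lower bound you only need that cubes meeting $B(x,cr)$ satisfy $d(x,\ya)\lesssim r$, whence $V_r(x)+V(x,\ya)\sim V_r(x)$ and $[r/(r+d(x,\ya))]^{\gamma p}\sim 1$ by doubling. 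Both follow from $\diam\qa\lesssim r$ alone. So the proof stands as written once the constants are relaxed.
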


The following lemma is just \cite [Lemma 5.3]
{hmy08} whose proof remains valid for any
quasi-metric $d$ and
is independent of the reverse doubling property
of the considered measure; we omit the details here.

\begin{lemma}\label{10.18.5}
Let $\gamma \in (0,\infty)$ and $r \in
(\omega/(\omega+\gamma),1]$ with $\omega$ as in
\eqref{eq-doub}. Then there exists a positive
constant $C$ such that,
for any $k,\ k' \in \zz$, $x \in \mathcal{X}$,
and $a_\alpha^{k,m}\in\cc$
and $\ya \in \qa$ with $\alpha \in \ca_k$ and
$m\in\{1,\dots,N(k,\alpha)\}$,
\begin{align*}
&\sum_{\alpha \in \ca_k}\sum_{m=1}^{N(k,
\alpha)}\mu\left(\qa\right)
\frac{1}{V_{\delta^{k\wedge k'}}(x)+V(x,\ya)}
\left[\frac{\delta^{k\wedge k'}}
{\delta^{k\wedge k'}+d(x,\ya)}\right]^{\gamma}|
a_\alpha^{k,m}|\\
&\quad \leq C \delta^{[k-(k\wedge k')]
\omega(1-1/r)}\left[M\left(\sum_{\alpha \in \ca_k}
\sum_{m=1}^{N(k,\alpha)}|a_\alpha^{k,m}|
^r\mathbf 1_{\qa}\right)(x)\right]^{1/r},\notag
\end{align*}
where $M$ is as in \eqref{m}.
\end{lemma}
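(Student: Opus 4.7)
The plan is to decompose the sum over $(\alpha,m)$ into annular regions based on $d(x,\ya)$, control each annulus by the Hardy--Littlewood maximal operator evaluated at $x$, and then sum in the annular index. For each $j\in\zz_+$, set
\[
E_j:=\begin{cases}\{(\alpha,m):\, d(x,\ya)<\delta^{k\wedge k'}\} & \text{if } j=0,\\
\{(\alpha,m):\, 2^{j-1}\delta^{k\wedge k'}\le d(x,\ya)<2^j\delta^{k\wedge k'}\} & \text{if } j\ge 1,\end{cases}
\]
and $B_j:=B(x,2^j\delta^{k\wedge k'})$. On $E_j$, Lemma \ref{6.15.1}(i) immediately gives $V_{\delta^{k\wedge k'}}(x)+V(x,\ya)\gtrsim \mu(B_j)$ and $[\delta^{k\wedge k'}/(\delta^{k\wedge k'}+d(x,\ya))]^{\gamma}\lesssim 2^{-j\gamma}$. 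Using $\qa\subset B(z_\alpha^{k,m},2A_0C_0\delta^k)$ from Lemma \ref{10.22.1}(iii), the quasi-triangle inequality, and $\delta^k\le\delta^{k\wedge k'}$, one sees that any $\qa$ with $(\alpha,m)\in E_j$ is contained in a fixed dilate $cB_j$ whose measure is $\sim\mu(B_j)$ by \eqref{eq-doub}.

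The main step is to bound $\sum_{(\alpha,m)\in E_j}\mu(\qa)|a_\alpha^{k,m}|$ in terms of an $r$-th power sum. I would apply \eqref{r} to the non-negative summands $\mu(\qa)|a_\alpha^{k,m}|$, obtaining
\[
\sum_{(\alpha,m)\in E_j}\mu(\qa)|a_\alpha^{k,m}|\le\left[\sum_{(\alpha,m)\in E_j}\mu(\qa)^{r}|a_\alpha^{k,m}|^{r}\right]^{1/r}.
\]
To convert $\mu(\qa)^{r}$ back into $\mu(\qa)$, I would write $\mu(\qa)^{r}=\mu(\qa)\cdot\mu(\qa)^{r-1}$ and invoke doubling: Lemma \ref{10.22.1}(iii) yields $\mu(\qa)\sim V_{\delta^k}(z_\alpha^{k,m})$, and \eqref{eq-doub} applied to the radii $\delta^k\le 2^j\delta^{k\wedge k'}$ gives
\[
V_{\delta^k}(z_\alpha^{k,m})\gtrsim \left(\frac{\delta^k}{2^j\delta^{k\wedge k'}}\right)^{\omega}V_{2^j\delta^{k\wedge k'}}(z_\alpha^{k,m})\sim \left(\frac{\delta^k}{2^j\delta^{k\wedge k'}}\right)^{\omega}\mu(B_j),
\]
where the last equivalence uses Lemma \ref{6.15.1}(i) together with $d(x,z_\alpha^{k,m})\lesssim 2^j\delta^{k\wedge k'}$. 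Since $r-1\le 0$, this inverts to $\mu(\qa)^{r-1}\lesssim [(\delta^k/(2^j\delta^{k\wedge k'}))^{\omega}\mu(B_j)]^{r-1}$. Plugging this bound in, and using disjointness of $\{\qa\}_{(\alpha,m)\in E_j}$ inside $cB_j$ together with the definition \eqref{m} of $M$, one arrives at
\[
\sum_{(\alpha,m)\in E_j}|a_\alpha^{k,m}|^{r}\mu(\qa)\lesssim \mu(B_j)\,M\!\left(\sum_{\alpha,m}|a_\alpha^{k,m}|^{r}\mathbf 1_{\qa}\right)\!(x).
\]

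Combining everything, the $E_j$-piece of the left-hand side is controlled by a constant multiple of
\[
2^{-j\gamma}\mu(B_j)^{-1}\left(\frac{\delta^k}{2^j\delta^{k\wedge k'}}\right)^{\omega(1-1/r)}\mu(B_j)\left[M\!\left(\sum_{\alpha,m}|a_\alpha^{k,m}|^{r}\mathbf 1_{\qa}\right)\!(x)\right]^{1/r}.
\]
Summing in $j\ge 0$ pulls out $\delta^{[k-(k\wedge k')]\omega(1-1/r)}$ and leaves the geometric series $\sum_{j\ge 0}2^{-j[\gamma+\omega(1-1/r)]}$, which converges iff $\gamma+\omega(1-1/r)>0$, equivalently $r>\omega/(\omega+\gamma)$ — exactly the hypothesis. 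The main obstacle is precisely this bookkeeping: since $r-1\le 0$, the doubling bound must be used as a \emph{lower} estimate for $\mu(\qa)$ to produce the correct \emph{loss} factor $\delta^{[k-(k\wedge k')]\omega(1-1/r)}$ (trivial when $k\le k'$, nontrivial when $k>k'$), and the convergence of the tail sum is what pins down the range of admissible $r$.
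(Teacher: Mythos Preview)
Your proof is correct and is precisely the standard annular-decomposition argument; the paper itself omits the proof entirely, merely citing \cite[Lemma 5.3]{hmy08} and remarking that the argument there carries over without the reverse doubling assumption. Your write-up is in fact more detailed than what the paper provides, and the bookkeeping you flag---using doubling as a lower bound on $\mu(\qa)$ because $r-1\le 0$, and the convergence condition $\gamma+\omega(1-1/r)>0$ pinning down $r>\omega/(\omega+\gamma)$---is exactly the heart of the matter.
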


The following lemma is the Fefferman--Stein
vector-valued maximal inequality  established
in \cite[Theorem 1.2]{gly09}.
\begin{lemma}\label{fsvv}
Let $p\in(1,\infty)$, $q\in(1,\infty]$, and $M$
be the Hardy--Littlewood maximal operator on
$\mathcal{X}$ as
in \eqref{m}. Then there exists a positive
constant $C$ such that, for any sequence
$\{f_j\}_{j\in\zz}$ of
measurable functions on $\mathcal{X}$,
$$
\left\|\left\{\sum_{j\in\zz}[M(f_j)]^q\right\}^{1/
q}\right\|_{L^p(\mathcal{X})}
\leq C\left\|\left(\sum_{j\in\zz}|f_j|^q\right)^{1/
q}\right\|_{L^p(\mathcal{X})}
$$
with the usual modification made when $q = \infty$.
\end{lemma}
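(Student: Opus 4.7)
The plan is to follow the classical Fefferman--Stein strategy, but carried out in the setting of $(\cx,d,\mu)$ so that only scalar ingredients already available on a space of homogeneous type are used: the $L^p(\cx)$-boundedness of $M$ for $p\in(1,\infty]$, the weak type $(1,1)$ of $M$, the dyadic cube system of Lemma~\ref{10.22.1}, Marcinkiewicz interpolation, and $L^p$--$L^{p'}$ duality.

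The case $q=\infty$ would be handled first, and is immediate from the pointwise estimate $\sup_{j\in\zz} M(f_j)(x)\le M(\sup_{j\in\zz}|f_j|)(x)$ combined with the scalar strong $(p,p)$ bound for $M$. Next, for $q\in(1,\infty)$, the diagonal case $p=q$ follows from Fubini's theorem and the scalar $L^q(\cx)$-boundedness of $M$ applied coordinate-wise. To reach the off-diagonal range $p\in(1,\infty)\setminus\{q\}$, I would introduce the sublinear operator
$$
\mathcal{T}(\{f_j\}_{j\in\zz})(x):=\left(\sum_{j\in\zz}[M(f_j)(x)]^q\right)^{1/q}
$$
acting on sequences, and establish its weak type $(1,1)$ from $L^1(\cx;\ell^q)$ to weak-$L^1(\cx)$. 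This weak bound is proved by a vector-valued Calder\'on--Zygmund decomposition of the scalar majorant $F(x):=(\sum_j|f_j(x)|^q)^{1/q}$ at height $\lambda$, the decomposition cubes being the Whitney-type dyadic cubes provided by Lemma~\ref{10.22.1} inside the level set $\{M(F)>\lambda\}$. Each $f_j$ is split coordinate-wise into a good part (bounded, hence controllable through the strong $(q,q)$ bound combined with Minkowski's inequality in $\ell^q$) and a bad part (having vanishing average on each cube, treated via the doubling property and the disjointness of the cubes). Marcinkiewicz interpolation between this weak $(1,1)$ inequality and the strong $(q,q)$ inequality then yields the claim for $p\in(1,q]$, and the remaining range $p\in(q,\infty)$ is obtained either by interpolating against the already-established $q=\infty$ endpoint, or by duality using $(\ell^q)^\ast\cong\ell^{q'}$ together with $L^p\cong(L^{p'})^\ast$ (so that $q'<p'$ falls into the range already handled).

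The main obstacle is the vector-valued Calder\'on--Zygmund decomposition: the Whitney cubes are chosen with respect to the scalar majorant $F$, yet the cancellation property needed to handle the bad part must be applied separately for each coordinate $f_j$, and one must then resum in $\ell^q$ without losing control of the constant. Some care is required so that the doubling constant $C_{(\mu)}$ and the quasi-triangle constant $A_0$ do not accumulate unfavourably when passing from a Whitney cube to a fixed dilate of it. Once this step is in place, all remaining ingredients are routine consequences of the doubling property and the existence of the dyadic cubes on $(\cx,d,\mu)$ already recalled in Section~\ref{Scrf}.
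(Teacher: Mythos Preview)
The paper does not supply its own proof of this lemma; it is simply quoted from \cite[Theorem~1.2]{gly09}. Your overall architecture (the diagonal case $p=q$, a weak-type endpoint at $p=1$, Marcinkiewicz interpolation, then an extension to $p>q$) is indeed the classical Fefferman--Stein scheme, but two of the steps in your outline would not go through as written.

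First, the bad part in the weak $(1,1)$ bound cannot be handled through cancellation: since $M(b_{j,k})=M(|b_{j,k}|)$, the vanishing mean of $b_{j,k}$ on its cube is invisible to the maximal operator, in sharp contrast with the singular-integral case you seem to have in mind. If one retains only the size estimate $Mb_{j,k}(x)\lesssim\|b_{j,k}\|_{L^1(\cx)}/\mu(B(x,d(x,c_k)))$ for $x$ outside a dilate of $Q_k$ and then sums and integrates, the integral $\int_{\cx\setminus cQ_k}[\mu(B(x,d(x,c_k)))]^{-1}\,d\mu(x)$ diverges. The correct mechanism is geometric rather than cancellative: any ball centred at $x\notin\Omega^\ast$ that meets $Q_k$ must engulf $Q_k$, which gives $Mb_j(x)\lesssim M\widetilde b_j(x)$ for the step function $\widetilde b_j:=\sum_k\bigl(\mu(Q_k)^{-1}\int_{Q_k}|b_{j,k}|\,d\mu\bigr)\mathbf 1_{Q_k}$; since $\bigl(\sum_j\widetilde b_j^{\,q}\bigr)^{1/q}\lesssim\lambda$ pointwise, one simply feeds $\{\widetilde b_j\}$ back into the strong $(q,q)$ bound already established. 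Second, neither of your proposals for reaching $p>q$ works: interpolating against the $q=\infty$ endpoint cannot return an $\ell^q$-valued estimate (the sequence-space index moves strictly toward $\infty$), and direct duality fails because $M$ is nonlinear---indeed the $L^\infty(\cx;\ell^q)$ endpoint is false for $q<\infty$, so there is nothing to interpolate against from above. The standard remedy here is the scalar Fefferman--Stein weighted inequality $\int_\cx(Mf)^q\,w\,d\mu\le C_q\int_\cx|f|^q(Mw)\,d\mu$, after which one dualises in $L^{p/q}(\cx)$ and uses the $L^{(p/q)'}(\cx)$-boundedness of $M$ on the weight.
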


Now, we prove Proposition \ref{btlinlip}.

\begin{proof}[Proof of Proposition \ref{btlinlip}]
To prove (i), by \cite[Theorem 3.3 and Lemma
3.16]{awyy}, we have, for any $f\in\hb$ with
$s$, $p$, and $q$ as in (i), $f\in
L_\loc^p(\cx)$.
Let $\{Q_k\}_{k=-\infty}^\infty$ be an exp-ATI
and fix $x_1\in\cx$. Since $f\in \dot{B}_{p,q}^s(\cx)$,
from \cite[Propositions 3.13(i) and 3.15]{whhy},
it follows that $f\in\dot{B}_{p,\infty}^s(\cx)$
and
$\|f\|_{\dot{B}_{p,\infty}^s(\cx)}
\lesssim\|f\|_{\hb}$, and, moreover,
$f\in (\cgg)'$ with $\eta,
\ \beta$, and $\gamma$ as in this proposition.
Thus, for any $g\in\cgg$,
by $\int_{\cx}g(x)d\mu(x)=0$ and Lemma
\ref{crf}, we obtain
\begin{align*}
\langle f,g\rangle&= \sum_{k=-\infty}
^{l_0-1}\sum_{\alpha \in \ca_k}
\sum_{m=1}^{N(k,\alpha)}
\mu\left(\qa\right)Q_kf\left(\ya\right)
\left\langle\widetilde{Q}_k\left(\cdot,
\ya\right)-\widetilde{Q}
_k\left(x_1,\ya\right),g\right\rangle\\
&\qquad +\sum_{k=l_0}^\infty\sum_{\alpha \in \ca_k}
\sum_{m=1}^{N(k,\alpha)}
\mu\left(\qa\right)Q_kf\left(\ya\right)
\left\langle\widetilde{Q}_k\left(\cdot,
\ya\right),g\right\rangle.
\end{align*}
Define, formally, for any $l_0\in\zz$ and $x\in
B(x_1,\delta^{l_0})$,
\begin{align}\label{de-f}
\widetilde{f}(x)&:=\sum_{k=-\infty}
^{l_0-1}\sum_{\alpha \in \ca_k}
\sum_{m=1}^{N(k,\alpha)}
\mu\left(\qa\right)Q_kf\left(\ya\right)
\left[\widetilde{Q}_k\left(x,\ya\right)-
\widetilde{Q}_k\left(x_1,\ya\right)\right]\\
&\qquad +\sum_{k=l_0}^\infty\sum_{\alpha \in \ca_k}
\sum_{m=1}^{N(k,\alpha)}
\mu\left(\qa\right)Q_kf\left(\ya\right)
\widetilde{Q}_k\left(x,\ya\right)\notag\\
&=:{\rm J}_1(x)+{\rm J}_2(x).\notag
\end{align}

We next show that $\widetilde{f}$ is well
defined. We first consider the case
$p\in[1,\infty)$.
In this case, we show that, for any $l_0\in\zz$,
\begin{equation}\label{floc}
\left[\int_{B(x_1,\delta^{l_0})}\left|\widetilde{f}
(x)\right|^p\,d\mu(x)\right]^{1/p}
\lesssim \delta^{l_0s}\|f\|_{\dot{B}_{p,\infty}^s(\cx)}.
\end{equation}

We first estimate ${\rm J}_1$.
By \eqref{4.23y}, we have, for any
$x\in B(x_1,\delta^{l_0})$,
\begin{align}\label{j1}
|{\rm J}_1(x)|&\lesssim \sum_{k=-\infty}
^{l_0-1}\sum_{\alpha \in \ca_k}
\sum_{m=1}^{N(k,\alpha)}\mu\left(\qa\right)
\left|Q_kf\left(\ya\right)\right|
\left[\frac{d(x,x_1)}
{\delta^k+d(x_1,\ya)}\right]^\beta
R_\gamma(x_1,\ya;k),
\end{align}
where $R_\gamma(x_1,\ya;k)$ is as in \eqref{rxy}.
From this, \eqref{r}, Lemma \ref{6.15.1}(i),
the H\"older inequality, Lemma \ref{9.14.1},
$s\in(0,\beta\wedge\gamma)$,
and $\mu(\qa)\lesssim V_{\delta^k}(\ya)$, we
deduce that, for any $x\in B(x_1,\delta^{l_0})$,
\begin{align*}
|{\rm J}_1(x)|^p&\lesssim [d(x,x_1)]^{\beta p}
\left\{\sum_{k=-\infty}^{l_0-1}
\sum_{\alpha \in \ca_k}
\sum_{m=1}^{N(k,\alpha)}\delta^{-k\beta}
\mu\left(\qa\right)\left|Q_kf\left(\ya\right)
\right|
R_\gamma\left(x_1,\ya;k\right)\right\}^p\\
&\lesssim [d(x,x_1)]^{\beta p}\left\{\sum_{k=-
\infty}^{l_0-1}
\left[\sum_{\alpha \in \ca_k}
\sum_{m=1}^{N(k,\alpha)}\delta^{-k\beta p}
\mu\left(\qa\right)\left|Q_kf\left(\ya\right)
\right|^p
R_\gamma\left(x_1,\ya;k\right)\right]^{1/p}\right.\notag\\
&\qquad\times\left.\left[\sum_{\alpha \in \ca_k}
\sum_{m=1}^{N(k,\alpha)}
\mu\left(\qa\right)
R_\gamma\left(x_1,\ya;k\right)
\right]^{1/p'}\right\}^p\notag\\
&\lesssim \frac{[d(x,x_1)]^{\beta p}}
{V_{\delta^{l_0}}(x_1)}
\left\{\sum_{k=-\infty}^{l_0-1}\delta^{-k\beta}
\left[\sum_{\alpha \in \ca_k}
\sum_{m=1}^{N(k,\alpha)}\mu\left(\qa\right)
\left|Q_kf\left(\ya\right)\right|^p\right]^{1/p}
\right\}^p\notag\\
&\lesssim \|f\|_{\dot{B}_{p,\infty}^s(\cx)}^p
\frac{[d(x,x_1)]^{\beta p}}{V_{\delta^{l_0}}(x_1)}
\delta^{-l_0(\beta-s)p},\notag
\end{align*}
which further implies that, for any
$p\in[1,\infty)$ and $l_0\in\zz$,
\begin{align}\label{j1loc}
\int_{B(x_1,\delta^{l_0})}|{\rm J}_1(x)|
^p\,d\mu(x)
&\lesssim \|f\|_{\dot{B}_{p,\infty}^s(\cx)}^p
\frac{\delta^{-l_0(\beta-s)p}}{V_{\delta^{l_0}}(x_1)}
\int_{B(x_1,\delta^{l_0})}[d(x,x_1)]^{\beta p}\,d\mu(x)\\
&\lesssim \delta^{l_0sp}\|f\|_{\dot{B}_{p,\infty}^s(\cx)}^p.\noz
\end{align}

Now, we estimate ${\rm J}_2$. By
\eqref{4.23x}, we find that, for any
$x\in B(x_1,\delta^{l_0})$,
\begin{equation}\label{j2}
|{\rm J}_2(x)|\lesssim \sum_{k=l_0}^{\infty}
\sum_{\alpha \in \ca_k}
\sum_{m=1}^{N(k,\alpha)}\mu\left(\qa\right)
\left|Q_kf\left(\ya\right)\right|
R_\gamma\left(x,\ya;k\right).
\end{equation}
Thus, using the H\"older inequality and Lemma \ref{9.14.1},
we conclude that, for any $x\in B(x_1,
\delta^{l_0})$,
\begin{align*}
|{\rm J}_2(x)|&\lesssim \sum_{k=l_0}^\infty
\left\{\sum_{\alpha \in \ca_k}
\sum_{m=1}^{N(k,\alpha)}\mu\left(\qa\right)
\left|Q_kf\left(\ya\right)\right|^p
R_\gamma\left(x,\ya;k\right)\right\}^{1/p}\\
&\qquad\times\left\{\sum_{\alpha \in \ca_k}
\sum_{m=1}^{N(k,\alpha)}\mu\left(\qa\right)
R_\gamma\left(x,\ya;k\right)\right\}^{1/p'}\\
&\lesssim \sum_{k=l_0}^\infty
\left\{\sum_{\alpha \in \ca_k}
\sum_{m=1}^{N(k,\alpha)}\mu\left(\qa\right)
\left|Q_kf\left(\ya\right)\right|^p
R_\gamma\left(x,\ya;k\right)\right\}^{1/p}.
\end{align*}
From this, the Minkowski inequality, and
\eqref{9.14.1x}, we deduce that
\begin{align}\label{j2locii}
&\left[\int_{B(x_1,\delta^{l_0})}|{\rm J}_2(x)|
^p\,d\mu(x)\right]^{1/p}\\
&\qquad\lesssim \sum_{k=l_0}^\infty
\left\{\sum_{\alpha \in \ca_k}
\sum_{m=1}^{N(k,\alpha)}\mu\left(\qa\right)
\left|Q_kf\left(\ya\right)\right|^p\int_{\cx}
R_\gamma\left(x,\ya;k\right)\,d\mu(x)\right\}^{1/p}\notag\\
&\qquad\lesssim \sum_{k=l_0}^\infty
\left\{\sum_{\alpha \in \ca_k}
\sum_{m=1}^{N(k,\alpha)}\mu\left(\qa\right)
\left|Q_kf\left(\ya\right)\right|^p\right\}^{1/p}
\lesssim\delta^{l_0s}\|f\|_{\dot{B}_{p,\infty}
^s(\cx)}.\notag
\end{align}
By \eqref{j1loc} and \eqref{j2locii}, we know
that \eqref{floc} holds true and the infinite summation
in \eqref{de-f} converges in $L_{\loc}^p(\cx)$
when $p\in[1,\infty)$. Moreover, $\widetilde{f}$
is well defined when $p\in[1,\infty)$.

Next, we consider the case $p=\infty$.
Indeed, in this case, from \eqref{j1}, Lemma \ref{9.14.1},
$s\in(0,\beta\wedge\gamma)$,
and the arbitrariness of $\ya$,
we deduce that, for any $l_0\in\zz$ and
$x\in B(x_1, \delta^{l_0})$,
\begin{align}\label{j1i}
|{\rm J}_1(x)|&\lesssim \delta^{l_0\beta}
\sum_{k=-\infty}^{l_0-1}\delta^{-k\beta}\|
Q_kf\|_{L^\infty(\cx)}\sum_{\alpha\in\ca_k}
\sum_{m=1}^{N(k,\alpha)}\mu\left(\qa\right)
R_\gamma\left(x_1,\ya;k\right)\\
&\lesssim \delta^{l_0\beta}\|f\|_{\dot{B}
^s_{\infty,\infty}(\cx)}\sum_{k=-\infty}
^{l_0-1}\delta^{-k(\beta-s)}\lesssim
\delta^{l_0s}\|f\|_{\dot{B}^s_{\infty,\infty}(\cx)}.\notag
\end{align}
Besides, by \eqref{j2}, Lemma \ref{9.14.1},
$s\in(0,\beta\wedge\gamma)$,
and the arbitrariness of $\ya$,
we conclude that, for any $l_0\in\zz$ and
$x\in B(x_1, \delta^{l_0})$,
\begin{align}\label{j2i}
|{\rm J}_2(x)|\lesssim \sum_{k=l_0}^\infty\|
Q_k\|_{L^\infty(\cx)}
\lesssim \|f\|_{\dot{B}^s_{\infty,\infty}(\cx)}
\sum_{k=l_0}^\infty\delta^{ks}
\lesssim \delta^{l_0s}\|f\|_{\dot{B}^s_{\infty,\infty}(\cx)}.
\end{align}
From this and \eqref{j1i}, we deduce
that \eqref{floc} holds true and the infinite
summation in \eqref{de-f}
converges in $L^\infty(B(x_1,\delta^{l_0}))$.
Moreover, $\widetilde{f}$ is well defined when $p=\infty$.

Now, we show that, for any $l_0\in\zz$, there
exists a constant $C_{(l_0)}$,
depending on $l_0$, such that, for almost every
$x\in B(x_1,\delta^{l_0})$,
$\widetilde{f}(x)-C_{(l_0)}=f(x)$. Indeed, for any
$n\in\nn$, let
\begin{align*}
f_n(\cdot)&:=\sum_{k=l_0-n}
^{l_0-1}\sum_{\alpha \in \ca_k}
\sum_{m=1}^{N(k,\alpha)}
\mu\left(\qa\right)Q_kf\left(\ya\right)
\left[\widetilde{Q}_k\left(\cdot,\ya\right)-
\widetilde{Q}_k\left(x_1,\ya\right)\right]\\
&\qquad +\sum_{k=l_0}^{l_0+n-1}\sum_{\alpha \in \ca_k}
\sum_{m=1}^{N(k,\alpha)}
\mu\left(\qa\right)Q_kf\left(\ya\right)
\widetilde{Q}_k\left(\cdot,\ya\right).
\end{align*}
Then we know that $f_n\to f$ in $(\cgg)'$
as $n\to\infty$, and,
by \eqref{j1loc}, \eqref{j2locii}, \eqref{j1i},
\eqref{j2i}, and the dominated convergence
theorem, we find that
$f_n\to\widetilde{f}$
in $L^p(B(x_1,\delta^{l_0}))$ as $n\to\infty$.
Thus, for any given $\varphi\in\cgg$ with
$\supp \varphi \subset B(x_1,\delta^{l_0})$
and $\beta,\ \gamma\in(0,\eta)$, we obtain
\begin{align*}
\int_{B(x_1,\delta^{l_0})}f(x)\varphi(x)\,
d\mu(x)&=\langle f,\varphi\rangle
=\lim_{n\to\infty} \langle f_n,\varphi\rangle
=\lim_{n\to\infty}\int_{B(x_1,\delta^{l_0})}f_n(x)\varphi(x)\, d\mu(x)\\
&= \int_{B(x_1,\delta^{l_0})}\widetilde{f}(x)\varphi(x)\, d\mu(x),
\end{align*}
which implies that there exists a constant
$C_{(l_0)}$, depending on $l_0$,
such that, for almost every
$x\in B(x_1,\delta^{l_0})$,
$\widetilde{f}(x)-C_{(l_0)}=f(x)$.
This proves the above claim.

Next, we show that, for any $f\in\hb$ with
$s,\ p$, and $q$ as in (i),
\begin{equation}\label{lipbesov}
\|f\|_{\lips}\lesssim\|f\|_{\hb}.
\end{equation}
We first consider the case
$p\in [1,\infty)$. Indeed, there
exists a unique $l_0\in\zz$
such that
$\widetilde{C}\in(\delta^{l_0+1},\delta^{l_0}]$,
where $\widetilde{C}$ is as in Definition \ref{lipi}(i).
Then, for any $l\in\zz$, $x\in\cx$, and
$y\in B(x,\widetilde{C}\delta^l)$, we write
\begin{align*}
f(x)-f(y)&=\sum_{k=-\infty}
^{l+l_0-1}\sum_{\alpha \in \ca_k}
\sum_{m=1}^{N(k,\alpha)}
\mu\left(\qa\right)Q_kf\left(\ya\right)
\left[\widetilde{Q}_k\left(x,\ya\right)-
\widetilde{Q}_k\left(y,\ya\right)\right]\\
&\qquad +\sum_{k=l+l_0}^\infty\sum_{\alpha \in \ca_k}
\sum_{m=1}^{N(k,\alpha)}\cdots\\
&=:{\rm J}_3(x,y)+{\rm J}_4(x,y).
\end{align*}
We first estimate ${\rm J}_3$. Since
$d(x,y)<\widetilde{C}\delta^l\leq\delta^{l+l_0}
<(2A_0)^{-1}[\delta^k+d(x,\ya)]$
for any $k\in(-\infty,l+l_0-1]\cap \zz$, from
the regularity condition of $\widetilde{Q}_k$,
it then follows that
\begin{align}\label{j3}
|{\rm J}_3(x,y)|&\lesssim \sum_{k=-\infty}
^{l+l_0-1}\sum_{\alpha \in \ca_k}
\sum_{m=1}^{N(k,\alpha)}\mu\left(\qa\right)
\left|Q_kf\left(\ya\right)\right|\left[\frac{d(x,y)}{\delta^k+d(x,
\ya)}\right]^\beta
R_\gamma\left(x,\ya;k\right),
\end{align}
where $R_\gamma\left(x,\ya;k\right)$ is as in \eqref{rxy}.
By this, \eqref{4.23y}, the H\"older inequality,
and Lemma \ref{9.14.1}, we have
\begin{align*}
|{\rm J}_3(x,y)|&\lesssim \sum_{k=-\infty}
^{l+l_0-1}\sum_{\alpha \in \ca_k}
\sum_{m=1}^{N(k,\alpha)}\mu\left(\qa\right)
\left|Q_kf\left(\ya\right)\right|\left[\frac{d(x,y)}{\delta^k+d(x,
\ya)}\right]^\beta
R_\gamma\left(x,\ya;k\right)\\
&\lesssim \delta^{l\beta}\sum_{k=-\infty}
^{l+l_0-1}\delta^{-k\beta}\left[\sum_{\alpha \in \ca_k}
\sum_{m=1}^{N(k,\alpha)}\mu\left(\qa\right)
\left|Q_kf\left(\ya\right)\right|^pR_{\beta+\gamma}\left(x,\ya;k\right)\right]^{1/p}.\notag
\end{align*}
From this, Lemma \ref{6.15.1}(ii), and the Minkowski inequality,
we deduce that, for any $l\in\zz$ and $x\in\cx$,
\begin{align}\label{j3iii}
&\left[\int_{\cx}\frac{1}{\mu(B(x,\widetilde{C}
\delta^l))}\int_{B(x,\widetilde{C}\delta^l)}
|{\rm J}_3(x,y)|^p\,d\mu(y)\,d\mu(x)\right]^{1/p}\\
&\qquad\lesssim \delta^{l\beta}\left\{\int_{\cx}
\left[\sum_{k=-\infty}^{l+l_0-1}\delta^{-k\beta}
\left\{\sum_{\alpha \in \ca_k}
\sum_{m=1}^{N(k,\alpha)}\mu\left(\qa\right)
\left|Q_kf\left(\ya\right)\right|^p\right.\right.
\right.\notag\\
&\qquad\qquad\times\left.\left.\left.
\vpz{\sum_{k=-\infty}^{l+l_0-1}}
R_{\beta+\gamma}\left(x,\ya;k\right)\right\}^{1/p}
\right]^p\,d\mu(x)\right\}^{1/p}\notag\\
&\qquad \lesssim \delta^{l\beta}\sum_{k=-
\infty}^{l+l_0-1}
\delta^{-k\beta}\left\{\sum_{\alpha \in \ca_k}
\sum_{m=1}^{N(k,\alpha)}\mu\left(\qa\right)
\left|Q_kf\left(\ya\right)\right|^p\int_{\cx}
R_{\beta+\gamma}\left(x,\ya;k\right)\,d\mu(x)
\right\}^{1/p}\notag\\
&\qquad \lesssim \delta^{l\beta}\sum_{k=-
\infty}^{l+l_0-1}
\delta^{-k\beta}\left\{\sum_{\alpha \in \ca_k}
\sum_{m=1}^{N(k,\alpha)}\mu\left(\qa\right)
\left|Q_kf\left(\ya\right)\right|^p\right\}^{1/p}\notag\\
&\qquad \lesssim\delta^{l\beta'}
\left\{\sum_{k=-\infty}^{l+l_0-1}\delta^{-k\beta'q}
\left[\sum_{\alpha \in \ca_k}
\sum_{m=1}^{N(k,\alpha)}\mu\left(\qa\right)
\left|Q_kf\left(\ya\right)\right|^p\right]^{q/p}
\right\}^{1/q},\notag
\end{align}
where $\beta':=\beta$ when $q\in(0,1]$
by using \eqref{r},
or $\beta'\in(s,\beta)$ when $q\in(1,\infty]$
by using the H\"older inequality.
By \eqref{j3iii}, we conclude that, if
$p\in[1,\infty)$, then
\begin{align}\label{j3besoviii}
&\left\{\sum_{l=-\infty}^\infty\delta^{-lsq}
\left[\int_{\cx}\frac{1}{\mu(B(x,\widetilde{C}
\delta^l))}\int_{B(x,\widetilde{C}\delta^l)}
|{\rm J}_3(x,y)|^p\,d\mu(y)\,d\mu(x)\right]^{q/
p}\right\}^{1/q}\lesssim \|f\|_{\hb}.
\end{align}

Next, we estimate ${\rm J}_4$.
Note that, by Lemma \ref{6.15.1}(i), we obtain,
for any given $p\in(0,\infty)$
and for any $k,\ l\in\zz$, $\alpha\in\ca_k$,
and $x,\ y\in\cx$,
\begin{align}\label{qxqy}
&\int_{\cx}\frac{1}{\mu(B(x,\widetilde{C}
\delta^l))}\int_{B(x,\widetilde{C}\delta^l)}
\left|\widetilde{Q}_{k}(y,\ya)\right|^p\,d\mu(y)
\,d\mu(x)\\
&\qquad=\int_{\cx}\int_{\cx}\frac{1}{\mu(B(x,
\widetilde{C}\delta^l))}
\left|\widetilde{Q}_{k}(y,\ya)\right|^p\mathbf{1}
_{\{y\in\cx:\,d(x,y)<\widetilde{C}\delta^l\}}(y)
\,d\mu(y)\,d\mu(x)\notag\\
&\qquad =\int_{\cx}\left|\widetilde{Q}_{k}(y,
\ya)\right|^p\left[\int_{\cx}\frac{1}{\mu(B(x,
\widetilde{C}\delta^l))}
\mathbf{1}_{\{x\in\cx:\,d(x,y)<\widetilde{C}
\delta^l\}}(x)\,d\mu(x)\right]\,d\mu(y)\notag\\
&\qquad\lesssim \int_{\cx}\left|\widetilde{Q}
_{k}(y,\ya)\right|^p\,d\mu(y).\notag
\end{align}
From \eqref{4.23x}, the H\"older inequality, and
Lemma \ref{9.14.1},
we deduce that
\begin{align*}
|{\rm J}_4(x,y)|&\lesssim \sum_{k=l+l_0}
^\infty\sum_{\alpha \in \ca_k}
\sum_{m=1}^{N(k,\alpha)}\mu\left(\qa\right)
\left|Q_kf\left(\ya\right)\right|
\left[\left|\widetilde{Q}_k\left(x,\ya\right)\right|
+\left|\widetilde{Q}_k\left(y,\ya\right)\right|\right]\\
&\lesssim\left\{\sum_{k=l+l_0}
^\infty\sum_{\alpha \in \ca_k}
\sum_{m=1}^{N(k,\alpha)}\delta^{-ksp/
2}\mu\left(\qa\right)
\left|Q_kf\left(\ya\right)\right|^p
\left[\left|\widetilde{Q}_k\left(x,\ya\right)\right|
+\left|\widetilde{Q}_k\left(y,\ya\right)\right|
\right]\right\}^{1/p}\\
&\qquad\times\left\{\sum_{k=l+l_0}
^\infty\sum_{\alpha \in \ca_k}
\sum_{m=1}^{N(k,\alpha)}\delta^{ksp'/
2}\mu\left(\qa\right)
\left[\left|\widetilde{Q}_k\left(x,\ya\right)\right|
+\left|\widetilde{Q}_k\left(y,\ya\right)\right|
\right]\right\}^{1/p'}\\
&\lesssim\delta^{ls/2}\left\{\sum_{k=l+l_0}
^\infty\sum_{\alpha \in \ca_k}
\sum_{m=1}^{N(k,\alpha)}\delta^{-ksp/
2}\mu\left(\qa\right)
\left|Q_kf\left(\ya\right)\right|^p
\left[\left|\widetilde{Q}_k\left(x,\ya\right)\right|
+\left|\widetilde{Q}_k\left(y,\ya\right)\right|
\right]\right\}^{1/p},
\end{align*}
which, together with \eqref{4.23x},
\eqref{qxqy}, and \eqref{9.14.1x}, further implies that
\begin{align}\label{j4ii}
&\delta^{-lsq}\left[\int_{\cx}\frac{1}{\mu(B(x,
\widetilde{C}\delta^l))}\int_{B(x,\widetilde{C}\delta^l)}
|{\rm J}_4(x,y)|^p\,d\mu(y)\,d\mu(x)\right]^{q/p}\\
&\qquad\lesssim \delta^{-lsq/2}\sum_{k=l+l_0}
^\infty\delta^{-ksp/2}\left\{\sum_{\alpha \in \ca_k}
\sum_{m=1}^{N(k,\alpha)}\mu\left(\qa\right)
\left|Q_kf\left(\ya\right)\right|^p\right\}^{q/p}.\notag
\end{align}
If $q/p\in(0,1]$, then, by \eqref{j4ii}, \eqref{r},
and the arbitrariness of $\ya$, we conclude that
\begin{align}\label{j4besoviii}
&\left\{\sum_{l=-\infty}^\infty\delta^{-lsq}
\left[\int_{\cx}\frac{1}{\mu(B(x,\widetilde{C}
\delta^l))}\int_{B(x,\widetilde{C}\delta^l)}
|{\rm J}_4(x,y)|^p\,d\mu(y)\,d\mu(x)\right]^{q/
p}\right\}^{1/q}\\
&\qquad \lesssim \left\{\sum_{l=-\infty}^\infty
\sum_{k=l+l_0}^\infty\delta^{(k-l)sq/
2}\delta^{-ksq}\left[\sum_{\alpha \in \ca_k}
\sum_{m=1}^{N(k,\alpha)}\mu\left(\qa\right)
\left|Q_kf\left(\ya\right)\right|^p\right]^{q/p}
\right\}^{1/q}\notag\\
&\qquad\lesssim\|f\|_{\hb};\notag
\end{align}
while, if $q/p\in(1,\infty]$, by \eqref{j4ii}, the
H\"older inequality, and the arbitrariness of
$\ya$, we obtain
\begin{align*}
&\left\{\sum_{l=-\infty}^\infty\delta^{-lsq}
\left[\int_{\cx}\frac{1}{\mu(B(x,\widetilde{C}
\delta^l))}\int_{B(x,\widetilde{C}\delta^l)}
|{\rm J}_4(x,y)|^p\,d\mu(y)\,d\mu(x)\right]^{q/
p}\right\}^{1/q}\\
&\qquad \lesssim \left\{\sum_{l=-\infty}
^\infty\delta^{-lsq/2}
\sum_{k=l+l_0}^\infty\delta^{-ksq/
4}\left[\sum_{\alpha \in \ca_k}
\sum_{m=1}^{N(k,\alpha)}\mu\left(\qa\right)\left|Q_kf
\left(\ya\right)\right|^p\right]^{q/p}\right.\notag\\
&\qquad\qquad\times\left.\left[\sum_{k=l+l_0}
^\infty\delta^{-ksp(q/p)'/4}\right]^{\frac{1}{(q/
p)'}}\right\}^{1/q}\notag\\
&\qquad\lesssim\|f\|_{\hb},\notag
\end{align*}
which, combined with \eqref{j3besoviii},
\eqref{j4besoviii}, then
completes the proof of \eqref{lipbesov} when $p\in[1,\infty)$.
The proof of the case $p=\infty$ is
similar to that of the case
$p\in [1,\infty)$; we omit the details here.
This finishes the proof of  (i).

Now, we show (ii). To this end, by
\cite[Theorem 3.3 and Lemma 3.16]{awyy}, we
know that, for any $f\in\hb$ with
$s$, $p$, and $q$ as in (ii),
$f\in L_\loc^1(\cx)$.
We claim that \eqref{floc} still holds true for
any $l_0\in\zz$, and $s$ and $p$ as in (ii).
Indeed, if $p\in(\omega/(\omega+s),1]$,
from \eqref{j1}, \eqref{r}, Lemma \ref{6.15.1}(i),
$s\in(0,\beta\wedge\gamma)$,
and $\mu(\qa)\lesssim V_{\delta^k}(\ya)$,
we deduce that
\begin{align*}
|{\rm J}_1(x)|^p&\lesssim \sum_{k=-\infty}
^{l_0-1}\sum_{\alpha \in \ca_k}
\sum_{m=1}^{N(k,\alpha)}
\left[\mu\left(\qa\right)\left|
Q_kf\left(\ya\right)\right|\right]^p\\
&\qquad\times\left[\frac{d(x,x_1)}
{\delta^k+d(x_1,\ya)}\right]^{\beta p}
\left[\frac{1}{V_{\delta^k}(x_1)+V_{\delta^k}(\ya)
+V(x_1,\ya)}\right]^p\notag\\
&\lesssim \frac{[d(x,x_1)]^{\beta p}}
{V_{\delta^{l_0}}(x_1)}
\sum_{k=-\infty}^{l_0-1}\delta^{-k\beta p}
\left[\sum_{\alpha \in \ca_k}
\sum_{m=1}^{N(k,\alpha)}\mu\left(\qa\right)
\left|Q_kf\left(\ya\right)\right|^p\right]\notag\\
&\lesssim \|f\|_{\dot{B}_{p,\infty}^s(\cx)}^p
\frac{[d(x,x_1)]^{\beta p}}{V_{\delta^{l_0}}(x_1)}
\delta^{-l_0(\beta-s)p}.\notag
\end{align*}
By this, we further conclude that,
for any $l_0\in\zz$ and
$p\in(\omega/(\omega+1),1)$,
\begin{align}\label{j1loci}
\int_{B(x_1,\delta^{l_0})}|{\rm J}_1(x)|
^p\,d\mu(x)
&\lesssim \|f\|_{\dot{B}_{p,\infty}^s(\cx)}^p
\frac{\delta^{-l_0(\beta-s)p}}{V_{\delta^{l_0}}(x_1)}
\int_{B(x_1,\delta^{l_0})}[d(x,x_1)]^{\beta p}\,d\mu(x)\\
&\lesssim \delta^{l_0sp}\|f\|_{\dot{B}_{p,\infty}^s(\cx)}^p.\notag
\end{align}
On the other hand, from Lemmas \ref{6.15.1}(i)
and \ref{9.14.1},
we deduce that, for any
$k\in\{l_0,l_0+1,\ldots\}$,
\begin{equation}\label{9.14.1x}
\int_{\cx}\left[R_\gamma\left(x,\ya;k\right)\right]^p\,d\mu(x)
\lesssim \left[\mu(\qa)\right]^{1-p},
\end{equation}
which, together with \eqref{j2} and \eqref{r},
implies that,
for any $p\in(\omega/
(\omega+s),1)$ and $l_0\in\zz$,
\begin{align}\label{j2loci}
&\int_{B(x_1,\delta^{l_0})}|{\rm J}_2(x)|
^p\,d\mu(x)\\
&\qquad\lesssim\int_{\cx}\sum_{k=l_0}^{\infty}
\sum_{\alpha \in \ca_k}
\sum_{m=1}^{N(k,\alpha)}
\left[\mu\left(\qa\right)\right]^p
\left|Q_kf\left(\ya\right)\right|
^p\left[R_\gamma\left(x,\ya;k\right)\right]^p
\,d\mu(x)\notag\\
&\qquad\lesssim \sum_{k=l_0}^{\infty}\sum_{\alpha \in \ca_k}
\sum_{m=1}^{N(k,\alpha)}
\left[\mu\left(\qa\right)\right]^p
\left|Q_kf\left(\ya\right)\right|^p\int_{\cx}
\left[R_\gamma\left(x,\ya;k\right)\right]^p
\,d\mu(x)\notag\\
&\qquad\lesssim \sum_{k=l_0}^{\infty}\sum_{\alpha \in \ca_k}
\sum_{m=1}^{N(k,\alpha)}\mu\left(\qa\right)
\left|Q_kf\left(\ya\right)\right|^p
\lesssim \delta^{l_0sp}\|f\|_{\dot{B}_{p,\infty}
^s(\cx)}^p.\notag
\end{align}
By \eqref{j1loci} and \eqref{j2loci}, we know
that \eqref{floc} still holds true and the infinite summation
in \eqref{de-f} converges in $L_{\loc}^p(\cx)$.
Moreover, $\widetilde{f}$ is well defined when $s$ and $p$ are as in (ii).

Next, we claim that, for any $l_0\in\zz$, there
exists a constant $C_{(l_0)}$,
depending on $l_0$, such that, for almost every
$x\in B(x_1,\delta^{l_0})$,
$\widetilde{f}(x)-C_{(l_0)}=f(x)$. Indeed, by
Lemma \ref{embed}, we know that
$$\dot{B}_{p,q}^s(\cx)\subset \dot{B}_{1,q}^{s-
\omega(\frac{1}{p}-1)}(\cx)\subset \dot{B}
_{1,\infty}^{s-\omega(\frac{1}{p}-1)}(\cx).$$
Since $p\in(\omega/(\omega+s),1)$, it follows
that $s-\omega(\frac{1}{p}-1)>0$.
Using an argument similar to that used in the estimations of
\eqref{j1loc} and \eqref{j2locii}, we also know
that $\widetilde{f}\in L_{\loc}^1(\cx)$.
From this and an argument similar to that used
in the case $p\in[1,\infty)$, we deduce that,
for any $l_0\in\zz$, there exists a constant
$C_{(l_0)}$, depending on $l_0$,
such that, for almost every $x\in
B(x_1,\delta^{l_0})$,
$\widetilde{f}(x)-C_{(l_0)}=f(x)$.

Now, we show that, for any $f\in\hb$ with
$s,\ p$, and $q$ as in (ii), \eqref{lipbesov} still holds true.
Indeed, by \eqref{j3}, \eqref{r}, and
\eqref{9.14.1x}, we find that
\begin{align}\label{j3i}
&\left\{\sum_{l=-\infty}^\infty\delta^{-lsq}
\left[\int_{\cx}\frac{1}{\mu(B(x,\widetilde{C}
\delta^l))}\int_{B(x,\widetilde{C}\delta^l)}
|{\rm J}_3(x,y)|^p\,d\mu(y)\,d\mu(x)\right]^{q/
p}\right\}^{1/q}\\
&\qquad \lesssim \left\{\sum_{l=-\infty}
^\infty\delta^{l(\beta-s)q}
\left[\sum_{k=-\infty}^{l+l_0-1}\delta^{-k\beta
p}\sum_{\alpha \in \ca_k}
\sum_{m=1}^{N(k,\alpha)}\mu\left(\qa\right)
\left|Q_kf\left(\ya\right)\right|^p\right]^{q/p}
\right\}^{1/q}.\notag
\end{align}
If $q/p\in(0,1]$, by \eqref{j3i}, \eqref{r},
and the arbitrariness of $\ya$, we have
\begin{align}\label{j3besovi}
&\left\{\sum_{l=-\infty}^\infty\delta^{-lsq}
\left[\int_{\cx}\frac{1}{\mu(B(x,\widetilde{C}
\delta^l))}\int_{B(x,\widetilde{C}\delta^l)}
|{\rm J}_3(x,y)|^p\,d\mu(y)\,d\mu(x)\right]^{q/
p}\right\}^{1/q}\lesssim \|f\|_{\hb};
\end{align}
while, if $q/p\in(1,\infty]$, by \eqref{j3},
the H\"older inequality,
$s\in (0,\beta)$,  and the arbitrariness of
$\ya$, we obtain
\begin{align}\label{j3besovii}
&\left\{\sum_{l=-\infty}^\infty\delta^{-lsq}
\left[\int_{\cx}\frac{1}{\mu(B(x,\widetilde{C}
\delta^l))}\int_{B(x,\widetilde{C}\delta^l)}
|{\rm J}_3(x,y)|^p\,d\mu(y)\,d\mu(x)\right]^{q/
p}\right\}^{1/q}\\
&\qquad\lesssim \left\{\sum_{l=-\infty}
^\infty\delta^{l(\beta-s)q}
\left[\sum_{k=-\infty}^{l+l_0-1}\delta^{-k\beta
p}\sum_{\alpha \in \ca_k}
\sum_{m=1}^{N(k,\alpha)}\mu\left(\qa\right)
\left|Q_kf\left(\ya\right)\right|^p\right]^{q/p}
\right\}^{1/q}\notag\\
&\qquad \lesssim \left(\sum_{l=-\infty}
^\infty\delta^{l(\beta-s)q}
\left\{\sum_{k=-\infty}^{l+l_0-1}\delta^{-
k(\beta+s)q/2}\left[\sum_{\alpha \in \ca_k}
\sum_{m=1}^{N(k,\alpha)}\mu\left(\qa\right)
\left|Q_kf\left(\ya\right)\right|^p\right]^{q/p}
\right\}\right.\notag\\
&\qquad\qquad\times\left.\left\{\sum_{k=-
\infty}^{l+l_0-1}
\delta^{-k(\beta-s)p(q/p)'/2}\right\}^{\frac{q}
{(q/p)'p}}\right)^{1/q}\notag\\
&\qquad\lesssim \left\{\sum_{k=-\infty}
^\infty\delta^{-ksq}\sum_{l=k-l_0+1}
^\infty\delta^{(l-k)(\beta-s)q/
2}\left[\sum_{\alpha \in \ca_k}
\sum_{m=1}^{N(k,\alpha)}\mu\left(\qa\right)
\left|Q_kf\left(\ya\right)\right|^p\right]^{q/p}
\right\}^{1/q}\notag\\
&\qquad\lesssim\|f\|_{\hb}.\notag
\end{align}
On the other hand,
by \eqref{r}, \eqref{qxqy}, \eqref{4.23x},  and
\eqref{9.14.1x}, we find that
\begin{align}\label{j4i}
&\left[\int_{\cx}\frac{1}{\mu(B(x,\widetilde{C}
\delta^l))}\int_{B(x,\widetilde{C}\delta^l)}
|{\rm J}_4(x,y)|^p\,d\mu(y)\,d\mu(x)\right]^{1/p}\\
&\qquad \lesssim\left\{\sum_{k=l+l_0}
^\infty\sum_{\alpha \in \ca_k}
\sum_{m=1}^{N(k,\alpha)}
\left[\mu\left(\qa\right)\left|
Q_kf\left(\ya\right)\right|\right]^p\right.\notag\\
&\qquad\qquad\times\left.\vpz{\sum_{k=-\infty}^{l+l_0-1}}
\int_{\cx}\frac{1}
{\mu(B(x,\widetilde{C}\delta^l))}
\int_{B(x,\widetilde{C}\delta^l)}\left[\left|
\widetilde{Q}_k\left(x,\ya\right)\right|^p
+\left|\widetilde{Q}_k\left(y,\ya\right)\right|
^p\right]\,d\mu(y)\,d\mu(x)\right\}^{1/p}\notag\\
&\qquad\lesssim \left\{\sum_{k=l+l_0}
^\infty\sum_{\alpha \in \ca_k}
\sum_{m=1}^{N(k,\alpha)}
\left[\mu\left(\qa\right)
\left|Q_kf\left(\ya\right)\right|\right]^p
\int_{\cx}\left|\widetilde{Q}_k\left(x,\ya\right)
\right|^p\,d\mu(x)\right\}^{1/p}\notag\\
&\qquad\lesssim \left\{\sum_{k=l+l_0}
^\infty\sum_{\alpha \in \ca_k}
\sum_{m=1}^{N(k,\alpha)}\mu\left(\qa\right)
\left|Q_kf\left(\ya\right)\right|^p\right\}^{1/p}.\notag
\end{align}
If $q/p\in(0,1]$, by \eqref{r} and \eqref{j4i},
we conclude that
\begin{align}\label{j4besovi}
&\left\{\sum_{l=-\infty}^\infty\delta^{-lsq}
\left[\int_{\cx}\frac{1}{\mu(B(x,\widetilde{C}
\delta^l))}\int_{B(x,\widetilde{C}\delta^l)}
|{\rm J}_4(x,y)|^p\,d\mu(y)\,d\mu(x)\right]^{q/
p}\right\}^{1/q}\lesssim \|f\|_{\hb};
\end{align}
while, if $q/p\in(1,\infty]$, by \eqref{j4i} and
the H\"older inequality, we find that
\begin{align*}
&\left[\int_{\cx}\frac{1}{\mu(B(x,\widetilde{C}
\delta^l))}\int_{B(x,\widetilde{C}\delta^l)}
|{\rm J}_4(x,y)|^p\,d\mu(y)\,d\mu(x)\right]^{q/p}\\
&\qquad\lesssim \delta^{lsq/2}\sum_{k=l+l_0}
^\infty\delta^{-ksq/2}\left\{\sum_{\alpha \in \ca_k}
\sum_{m=1}^{N(k,\alpha)}\mu\left(\qa\right)
\left|Q_kf\left(\ya\right)\right|^p\right\}^{q/p},
\end{align*}
which further implies that
\begin{align}\label{j4besovii}
&\left\{\sum_{l=-\infty}^\infty\delta^{-lsq}
\left[\int_{\cx}\frac{1}{\mu(B(x,\widetilde{C}
\delta^l))}\int_{B(x,\widetilde{C}\delta^l)}
|{\rm J}_4(x,y)|^p\,d\mu(y)\,d\mu(x)\right]^{q/
p}\right\}^{1/q}\lesssim \|f\|_{\hb}.
\end{align}
This, combined with \eqref{j3besovi}, \eqref{j3besovii},
and \eqref{j4besovi}, shows that
\eqref{lipbesov} still holds true for any given
$s$, $p$, and $q$ as in (ii), and hence finishes
the proof of (ii).

Next, we show (iii). Let $\{Q_k\}_{k=-\infty}
^\infty$ be an exp-ATI and $f\in\hf$ with $p\in(1,\infty)$.
Note that there exists a unique $l_0\in\zz$
such that
$\widetilde{C}\in(\delta^{l_0+1},\delta^{l_0}]$,
where $\widetilde{C}$ is as in Definition \ref{lipi}(i).
Using an argument similar to that used in the
estimation of \eqref{floc},
we know that, for any $s,\ p,$ and $q$ as in (ii),
$\hf\subset L_{\loc}^p(\cx)$,
and,  for any $l\in\zz$, $x\in\cx$ and $y\in
B(x,\widetilde{C}\delta^l)$,  we write
\begin{align*}
f(x)-f(y)&=\sum_{k=-\infty}
^{l+l_0-1}\sum_{\alpha \in \ca_k}
\sum_{m=1}^{N(k,\alpha)}
\mu\left(\qa\right)Q_kf\left(\ya\right)
\left[\widetilde{Q}_k\left(x,\ya\right)-
\widetilde{Q}_k\left(y,\ya\right)\right]\\
&\qquad +\sum_{k=l+l_0}^\infty\sum_{\alpha \in \ca_k}
\sum_{m=1}^{N(k,\alpha)}\cdots\\
&=:{\rm J}_5(x,y)+{\rm J}_6(x,y).
\end{align*}

We first estimate ${\rm J}_5$. By
\eqref{4.23y}, we have
\begin{align*}
|{\rm J}_5(x,y)|&\lesssim \sum_{k=-\infty}
^{l+l_0-1}\sum_{\alpha \in \ca_k}
\sum_{m=1}^{N(k,\alpha)}\mu\left(\qa\right)
\left|Q_kf\left(\ya\right)\right|
\left[\frac{d(x,y)}{\delta^k+d(x,
\ya)}\right]^\beta
R_\gamma\left(x,\ya;k\right)\\
&\lesssim \sum_{k=-\infty}^{l+l_0-1}\delta^{(l-
k)\beta}\sum_{\alpha \in \ca_k}
\sum_{m=1}^{N(k,\alpha)}\mu\left(\qa\right)
\left|Q_kf\left(\ya\right)\right|
R_\gamma\left(x,\ya;k\right),
\end{align*}
where $R_\gamma(x,\ya;k)$ is as in \eqref{rxy}.
From this, Lemma \ref{10.18.5}, and the
H\"older inequality, we deduce that
\begin{align*}
&\left\{\sum_{l=-\infty}^\infty\delta^{-lsq}
\left[\frac{1}{\mu(B(x,\widetilde{C}\delta^l))}
\int_{B(x,\widetilde{C}\delta^l)}
|{\rm J}_5(x,y)|\,d\mu(y)\right]^{q}\right\}^{1/q}\\
&\qquad\lesssim \left[\sum_{k=-\infty}
^\infty\delta^{-ksq}
\left\{M\left(\sum_{\alpha\in\ca_k}\sum_{m=1}^{N(k,\alpha)}
\left|Q_kf\left(\ya\right)\mathbf{1}_{\qa}\right|
\right)(x)\right\}^q\right]^{1/q}
\end{align*}
with $M$ as in \eqref{m}, which, together with
Lemma \ref{fsvv} and the arbitrariness of
$\ya$, further implies that
\begin{align}\label{j5}
&\left\|\left\{\sum_{l=-\infty}^\infty\delta^{-lsq}
\left[\frac{1}{\mu(B(\cdot,\widetilde{C}
\delta^l))}\int_{B(\cdot,\widetilde{C}\delta^l)}
|{\rm J}_5(\cdot,y)|\,d\mu(y)\right]^{q}\right\}^{1/q}
\right\|_{L^p(\cx)}\\
&\qquad\lesssim \left\|\left[\sum_{k=-\infty}^\infty\delta^{-ksq}
\left\{M\left(\sum_{\alpha\in\ca_k}\sum_{m=1}^{N(k,\alpha)}
\left|Q_kf\left(\ya\right)\mathbf{1}_{\qa}\right|
\right]\right\}^q\right)^{1/q}\right\|_{L^p(\cx)}\notag\\
&\qquad\lesssim\|f\|_{\hf}.\notag
\end{align}

Now, we estimate ${\rm J}_6$. By Lemma
\ref{10.18.5}, we have
\begin{align*}
|{\rm J}_6(x,y)|&\lesssim\sum_{k=l+l_0}
^\infty\sum_{\alpha \in \ca_k}
\sum_{m=1}^{N(k,\alpha)}\mu\left(\qa\right)
\left|Q_kf\left(\ya\right)\right|
\left[\left|\widetilde{Q}_k\left(x,\ya\right)\right|
+\left|\widetilde{Q}_k\left(y,\ya\right)\right|\right]\\
&\lesssim \sum_{k=l+l_0}^\infty\left\{
M\left(\sum_{\alpha\in\ca_k}\sum_{m=1}^{N(k,\alpha)}
\left|Q_kf\left(\ya\right)\right|\mathbf{1}_{\qa}\right)(x)\right.\\
&\qquad\left.+M\left(\sum_{\alpha\in\ca_k}
\sum_{m=1}^{N(k,\alpha)}
\left|Q_kf\left(\ya\right)\right|\mathbf{1}_{\qa}
\right)(y)\right\}.
\end{align*}
From this, the H\"older inequality, and the
Lebesgue differential theorem, we deduce that,
for almost every $x\in\cx$,
\begin{align*}
&\left\{\sum_{l=-\infty}^\infty\delta^{-lsq}
\left[\frac{1}{\mu(B(x,\widetilde{C}\delta^l))}
\int_{B(x,\widetilde{C}\delta^l)}
|{\rm J}_6(x,y)|\,d\mu(y)\right]^{q}\right\}^{1/q}\\
&\qquad\lesssim \left\{\sum_{l=-\infty}
^\infty\delta^{-lsq}
\left[\sum_{k=l+l_0}^\infty
\left\{M\left(\sum_{\alpha\in\ca_k}\sum_{m=1}
^{N(k,\alpha)}
\left|Q_kf\left(\ya\right)\right|\mathbf{1}_{\qa}
\right)(x)\right.\right.\right.\\
&\qquad\qquad\qquad\left.\left.\left.+M\circ
M\left(\sum_{\alpha\in\ca_k}\sum_{m=1}^{N(k,\alpha)}
\left|Q_kf\left(\ya\right)\right|\mathbf{1}_{\qa}
\right)(x)\right\}\right]^q\right\}^{1/q}\\
&\qquad \lesssim\left[\sum_{k=-\infty}
^\infty\delta^{-ksq}
\left\{M\circ M\left(\sum_{\alpha\in\ca_k}
\sum_{m=1}^{N(k,\alpha)}
\left|Q_kf\left(\ya\right)\mathbf{1}_{\qa}\right|
\right)(x)\right\}^q\right]^{1/q},
\end{align*}
where, for any $f\in L^1_{\loc}(\cx)$, $M\circ
M(f):=M(M(f))$.
This, combined with Lemma \ref{fsvv} and the
arbitrariness of $\ya$, implies that
\begin{equation*}
\left\|\left\{\sum_{l=-\infty}^\infty\delta^{-lsq}
\left[\frac{1}{\mu(B(\cdot,\widetilde{C}
\delta^l))}\int_{B(\cdot,\widetilde{C}\delta^l)}
|{\rm J}_6(\cdot,y)|\,d\mu(y)\right]^{q}\right\}^{1/q}
\right\|_{L^p(\cx)}\lesssim\|f\|_{\hf},
\end{equation*}
which, together with \eqref{j5},
then completes the proof of (iii).

Finally, we prove (iv). Let $\{Q_k\}_{k=-\infty}
^\infty$ be an exp-ATI and $f\in\hf$ with
$s,\ p$, and $q$ as in (iii).
Note that there exists a unique $l_0\in\zz$
such that
$\widetilde{C}\in(\delta^{l_0+1},\delta^{l_0}]$,
where $\widetilde{C}$ is as in Definition
\ref{lipii}(i). Using an argument similar to that
used in the estimation of \eqref{floc},
we know that $f\in L^1_{\loc}(\cx)$ and, for
any $l\in\zz$, $x\in\cx$,
and $y\in B(x,\widetilde{C}\delta^l)$, we  write
\begin{align*}
f(x)-f(y)&=\sum_{k=-\infty}
^{l+l_0-1}\sum_{\alpha \in \ca_k}
\sum_{m=1}^{N(k,\alpha)}
\mu\left(\qa\right)Q_kf\left(\ya\right)
\left[\widetilde{Q}_k\left(x,\ya\right)-
\widetilde{Q}_k\left(y,\ya\right)\right]\\
&\qquad +\sum_{k=l+l_0}^\infty\sum_{\alpha \in \ca_k}
\sum_{m=1}^{N(k,\alpha)}\cdots\\
&=:{\rm J}_7(x,y)+{\rm J}_8(x,y).
\end{align*}

We first estimate ${\rm J}_7$. By \eqref{4.23y}, we have
\begin{align*}
|{\rm J}_7(x,y)|&\lesssim \sum_{k=-\infty}
^{l+l_0-1}\sum_{\alpha \in \ca_k}
\sum_{m=1}^{N(k,\alpha)}\mu\left(\qa\right)
\left|Q_kf\left(\ya\right)\right|
\left[\frac{d(x,y)}{\delta^k+d(x,
\ya)}\right]^\beta
R_\gamma\left(x,\ya;k\right)\\
&\lesssim \sum_{k=-\infty}^{l+l_0-1}\delta^{(l-
k)\beta}\sum_{\alpha \in \ca_k}
\sum_{m=1}^{N(k,\alpha)}\mu\left(\qa\right)
\left|Q_kf\left(\ya\right)\right|R_\gamma\left(x,\ya;k\right),
\end{align*}
where $R_\gamma(x,\ya;k)$ is as in \eqref{rxy}.
From this and Lemma \ref{10.18.5}, we deduce
that, for any $r\in(\omega/(\omega+
\gamma),1]$ and $x\in\cx$,
\begin{align*}
&\left\{\sum_{l=-\infty}^\infty\delta^{-lsq}
\left[\frac{1}{\mu(B(x,\widetilde{C}\delta^l))}
\int_{B(x,\widetilde{C}\delta^l)}
|{\rm J}_7(x,y)|^u\,d\mu(y)\right]^{\frac{q}{u}}
\right\}^{1/q}\\
&\qquad\lesssim \left[\sum_{k=-\infty}
^\infty\delta^{-ksq}
\left\{M\left[\sum_{\alpha\in\ca_k}\sum_{m=1}
^{N(k,\alpha)}
\left|Q_kf\left(\ya\right)\right|^r\mathbf{1}
_{\qa}\right](x)\right\}^{\frac{q}{r}}\right]^{1/q},
\end{align*}
where we used \eqref{r} when $q\in (0,1]$, or
the H\"older inequality
when $q\in (1, \infty]$. This, combined with
Lemma \ref{fsvv}
and the arbitrariness of $\ya$, implies that
\begin{align}\label{j7}
&\left\|\left\{\sum_{l=-\infty}^\infty\delta^{-lsq}
\left[\frac{1}{\mu(B(\cdot,\widetilde{C}
\delta^l))}\int_{B(\cdot,\widetilde{C}\delta^l)}
|{\rm J}_7(\cdot,y)|^u\,d\mu(y)\right]^{\frac{q}
{u}}\right\}^{1/q}
\right\|_{L^p(\cx)}\\
&\qquad\lesssim \left\|\left[\sum_{k=-\infty}
^\infty\delta^{-ksq}
\left\{M\left(\sum_{\alpha\in\ca_k}\sum_{m=1}
^{N(k,\alpha)}
\left|Q_kf\left(\ya\right)\right|^r\mathbf{1}
_{\qa}\right)\right\}^{\frac{q}{r}}\right]^{1/q}
\right\|_{L^p(\cx)}\notag\\
&\qquad\lesssim \left\|\left[\sum_{k=-\infty}
^\infty\delta^{-ksq}
\left\{M\left(\sum_{\alpha\in\ca_k}\sum_{m=1}
^{N(k,\alpha)}
\left|Q_kf\left(\ya\right)\right|^r\mathbf{1}
_{\qa}\right)\right\}^{\frac{q}{r}}
\right]^{\frac{r}{q}}\right\|_{L^{\frac{p}{r}}(\cx)}
^{\frac{1}{r}}\notag\\
&\qquad\lesssim \left\|\left\{\sum_{k=-\infty}
^\infty\delta^{-ksq}
\left[\sum_{\alpha\in\ca_k}\sum_{m=1}^{N(k,
\alpha)}
\left|Q_kf\left(\ya\right)\right|^r\mathbf{1}
_{\qa}\right]^{\frac{q}{r}}\right\}^{\frac{r}{q}}
\right\|_{L^{\frac{p}{r}}(\cx)}^{\frac{1}{r}}\notag\\
&\qquad\sim \left\|\left[\sum_{k=-\infty}
^\infty\delta^{-ksq}
\sum_{\alpha\in\ca_k}\sum_{m=1}^{N(k,\alpha)}
\left|Q_kf\left(\ya\right)\right|^q\mathbf{1}
_{\qa}\right]^{\frac{1}{q}}\right\|_{L^{p}(\cx)}
\lesssim\|f\|_{\hf},\notag
\end{align}
where we chose $r\in (\omega/(\omega+
\gamma), \min\{p,q\})$.

Now, we estimate ${\rm J}_8$. By the size
condition of $\widetilde{Q}_k$ and Lemma
\ref{10.18.5}, we have, for any $r\in (\omega/
(\omega+\gamma), 1]$,
\begin{align*}
|{\rm J}_8(x,y)|&\lesssim\sum_{k=l+l_0}
^\infty\sum_{\alpha \in \ca_k}
\sum_{m=1}^{N(k,\alpha)}\mu\left(\qa\right)
\left|Q_kf\left(\ya\right)\right|
\left[\left|\widetilde{Q}_k\left(x,\ya\right)\right|
+\left|\widetilde{Q}_k\left(y,\ya\right)\right|\right]\\
&\lesssim \sum_{k=l+l_0}^\infty
\left\{M\left(\sum_{\alpha\in\ca_k}\sum_{m=1}
^{N(k,\alpha)}
\left|Q_kf\left(\ya\right)\right|^r\mathbf{1}
_{\qa}\right)(x)\right\}^{\frac{1}{r}}\\
&\qquad+\sum_{k=l+l_0}^\infty
\left\{M\left(\sum_{\alpha\in\ca_k}\sum_{m=1}
^{N(k,\alpha)}
\left|Q_kf\left(\ya\right)\right|^r\mathbf{1}
_{\qa}\right)(y)\right\}^{\frac{1}{r}}\\
&=: {\rm J}_{8,1}(x)+{\rm J}_{8,2}(y).
\end{align*}

Using an argument similar to that used in the
estimation of \eqref{j7}, we conclude that
\begin{align}\label{j81}
&\left\|\left\{\sum_{l=-\infty}^\infty\delta^{-lsq}
\left[\frac{1}{\mu(B(\cdot,\widetilde{C}
\delta^l))}\int_{B(\cdot,\widetilde{C}\delta^l)}
|{\rm J}_{8,1}(\cdot)|^u\,d\mu(y)\right]^{\frac{q}
{u}}\right\}^{1/q}
\right\|_{L^p(\cx)}\lesssim\|f\|_{\hf}.
\end{align}

To estimate ${\rm J}_{8,2}$, since
$u\in(0,\min\{p,q\})$, it follows that
\begin{align*}
&\left\{\sum_{l=-\infty}^\infty\delta^{-lsq}
\left[\frac{1}{\mu(B(x,\widetilde{C}\delta^l))}
\int_{B(x,\widetilde{C}\delta^l)}
|{\rm J}_{8,2}(y)|^u\,d\mu(y)\right]^{\frac{q}{u}}
\right\}^{1/q}\\
&\qquad\lesssim \left\{\sum_{l=-\infty}
^\infty\delta^{-lsq}
\left[M\left(\left\{\sum_{k=l+l_0}
^\infty\left[M\left(\sum_{\alpha\in\ca_k}
\sum_{m=1}^{N(k,\alpha)}
\left|Q_kf\left(\ya\right)\right|^r\mathbf{1}
_{\qa}\right)\right]^{\frac{1}{r}}
\right\}^u\right)(x)\right]^{\frac{q}{u}}\right\}^{1/q},
\end{align*}
which, together with Lemma \ref{fsvv}, implies that
\begin{align*}
&\left\|\left\{\sum_{l=-\infty}^\infty\delta^{-lsq}
\left[\frac{1}{\mu(B(\cdot,\widetilde{C}
\delta^l))}\int_{B(\cdot,\widetilde{C}\delta^l)}
|{\rm J}_{8,2}(y)|\,d\mu(y)\right]^{q}\right\}^{1/q}
\right\|_{L^p(\cx)}\\
&\qquad\lesssim \left\|\left\{\sum_{l=-\infty}^\infty\delta^{-lsq}
\left[M\left(\left\{\sum_{k=l+l_0}
^\infty\left[M\left(\sum_{\alpha\in\ca_k}
\sum_{m=1}^{N(k,\alpha)}
\left|Q_kf\left(\ya\right)\right|^r\mathbf{1}
_{\qa}\right)\right]^{\frac{1}{r}}
\right\}^u\right)\right]^{\frac{q}{u}}\right\}^{1/q}
\right\|_{L^p(\cx)}\\
&\qquad\sim \left\|\left\{\sum_{l=-\infty}
^\infty\delta^{-lsq}
\left[M\left(\left\{\sum_{k=l+l_0}
^\infty\left[M\left(\sum_{\alpha\in\ca_k}
\sum_{m=1}^{N(k,\alpha)}
\left|Q_kf\left(\ya\right)\right|^r\mathbf{1}
_{\qa}\right)\right]^{\frac{1}{r}}\right\}^u
\right)\right]^{\frac{q}{u}}\right\}^{\frac{u}{q}}
\right\|_{L^{\frac{p}{u}}(\cx)}^{\frac{1}{u}}\\
&\qquad\lesssim\left\|\left\{\sum_{l=-\infty}
^\infty\delta^{-lsq}
\left\{\sum_{k=l+l_0}
^\infty\left[M\left(\sum_{\alpha\in\ca_k}
\sum_{m=1}^{N(k,\alpha)}
\left|Q_kf\left(\ya\right)\right|^r\mathbf{1}
_{\qa}\right)\right]^{\frac{1}{r}}\right\}^q\right\}
^{\frac{1}{q}}
\right\|_{L^{p}(\cx)}.
\end{align*}
From this and an argument similar to that used
in the estimation of ${\rm J}_7$, we deduce that
\begin{align*}
\left\|\left\{\sum_{l=-\infty}^\infty\delta^{-lsq}
\left[\frac{1}{\mu(B(\cdot,\widetilde{C}
\delta^l))}\int_{B(\cdot,\widetilde{C}\delta^l)}
|{\rm J}_{8,2}(y)|\,d\mu(y)\right]^{q}\right\}^{1/q}
\right\|_{L^p(\cx)}\lesssim\|f\|_{\hf},
\end{align*}
where we chose $r\in(\omega/(\omega+
\gamma),\min\{p,q\})$.
Combining this, \eqref{j7}, and \eqref{j81}, we
then complete the proof of (iv) and hence of
Proposition \ref{btlinlip}.
\end{proof}

\begin{remark}
We point out that Proposition \ref{btlinlip}(iv) is new even when $\cx$ is an RD-spaces.
\end{remark}

Using Propositions \ref{pro-lipi}(i),
\ref{lipinbtl}, and \ref{btlinlip},
we obtain the following difference
characterization of homogeneous Besov
and Triebel--Lizorkin spaces, and we omit the details here.

\begin{theorem}\label{dc}
Let $\eta$ be as in Definition \ref{10.23.2},
$\beta,\ \gamma\in(0,\eta)$, and
$s\in(0,\beta\wedge\gamma)$.
\begin{enumerate}
\item[{\rm (i)}] If $p\in[1,\infty]$ and
$q\in(0,\infty]$, then
$\lips=\lipsb=\hb$
with equivalent (quasi-)norms.
\item[{\rm (ii)}] If $p\in(1,\infty)$ and
$q\in(1,\infty]$, then
$\lipst=\hf$
with equivalent norms.
\end{enumerate}
\end{theorem}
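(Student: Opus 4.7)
The plan is to obtain Theorem \ref{dc} as a direct chain of inclusions assembled from the propositions already proved in this section, so no new estimates are needed.

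For part (i), I would first note that the three spaces $\lips$, $\lipsb$, and $\hb$ all make sense under the hypothesis $p\in[1,\infty]$, $q\in(0,\infty]$, and $s\in(0,\beta\wedge\gamma)$, since $p\ge 1>\omega/[\omega+(\beta\wedge\gamma)]$, so the ranges in Propositions \ref{lipinbtl}(i) and \ref{btlinlip}(i) are automatically satisfied. Then I would string together the three known containments: Proposition \ref{pro-lipi}(i) gives $\lips\subset\lipsb$ with $\|\cdot\|_{\lipsb}\lesssim\|\cdot\|_{\lips}$; Proposition \ref{lipinbtl}(i) gives $\lipsb\subset\hb$ with $\|\cdot\|_{\hb}\lesssim\|\cdot\|_{\lipsb}$; and Proposition \ref{btlinlip}(i) gives $\hb\subset\lips$ with $\|\cdot\|_{\lips}\lesssim\|\cdot\|_{\hb}$. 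Composing these three containments closes the loop
\[
\lips\subset\lipsb\subset\hb\subset\lips,
\]
so all three spaces coincide as sets, and concatenating the three inequalities yields the equivalence of (quasi-)norms.

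For part (ii), I would proceed analogously in the Triebel--Lizorkin setting. Under $p\in(1,\infty)$ and $q\in(1,\infty]$, both of which exceed $\omega/[\omega+(\beta\wedge\gamma)]$, Proposition \ref{lipinbtl}(ii) gives $\lipst\subset\hf$ with $\|\cdot\|_{\hf}\lesssim\|\cdot\|_{\lipst}$, while Proposition \ref{btlinlip}(iii) gives $\hf\subset\lipst$ with $\|\cdot\|_{\lipst}\lesssim\|\cdot\|_{\hf}$. Combining the two inclusions yields $\lipst=\hf$ with equivalent norms.

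Since all of the analytic content (handling the exp-ATI, the discrete Calder\'on reproducing formula, the Fefferman--Stein inequality, and the interplay between $p\le 1$ and the lower bound) has already been discharged in Propositions \ref{lipinbtl} and \ref{btlinlip}, there is no genuine obstacle here; the only points to verify are that the parameter restrictions in those propositions are compatible with the restrictions in Theorem \ref{dc}, which is immediate from $p\ge 1$ (respectively $p,q>1$). Thus the proof is purely bookkeeping and the details are omitted, exactly as the authors indicate.
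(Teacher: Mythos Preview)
Your proposal is correct and matches the paper's own approach exactly: the paper simply states that Theorem \ref{dc} follows from combining Propositions \ref{pro-lipi}(i), \ref{lipinbtl}, and \ref{btlinlip}, and omits the details. Your chain of inclusions $\lips\subset\lipsb\subset\hb\subset\lips$ for part (i) and the two-way inclusion for part (ii) are precisely the intended bookkeeping.
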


\section{Relations with inhomogeneous Besov
and Triebel--Lizorkin spaces}\label{s4}

In this section, we consider the relations
between spaces of Lipschitz-type
and inhomogeneous Besov spaces or Triebel--
Lizorkin spaces on spaces of homogenous type.
In this section, we do not need to assume that
$\mu(\cx)=\infty$. Let us begin with the following proposition.

\begin{proposition}\label{ilipinbtl}
Let $\eta$ be as in Definition \ref{10.23.2} and
$\omega$ as in \eqref{eq-doub},
$\beta,\ \gamma\in(0,\eta)$, and
$s\in(0,\beta\wedge\gamma)$.
\begin{enumerate}
\item[{\rm(i)}] If $p\in[1,\infty]$
and $q\in(0,\infty]$, then $\ilipsb\subset \ihb$;
\item[{\rm(ii)}] If $p\in[1,\infty]$ and
$q\in(\omega/[\omega+
(\beta\wedge\gamma)],\infty]$,
then $\ilipst\subset \ihf$.
\end{enumerate}
\end{proposition}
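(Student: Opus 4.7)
The approach is to adapt the proof of Proposition \ref{lipinbtl}, with the inhomogeneous low-frequency/high-frequency decomposition of the norms $\|f\|_{\ihb}$ and $\|f\|_{\ihf}$ requiring separate treatment of the two regimes. Since both $\ilipsb$ and $\ilipst$ are by definition subspaces of $L^p(\cx)$, the hypothesis $f\in\ilipsb$ (resp.\ $\ilipst$) yields $f\in L^p(\cx)$, whence $f\in(\icgg)'$ by standard embedding, so that all the relevant quasi-norms are defined. Throughout, I will use Proposition \ref{llblt} to equivalently rewrite $\|f\|_{\ilipsb}$ (and the natural analogue for $\|f\|_{\ilipst}$) as $\|f\|_{L^p(\cx)}$ plus a sum over $k\geq 0$ only; this will align the target estimates with the small-ball structure that naturally appears below.

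The low-frequency part, which is of the identical form $\{\sum_{k=0}^N\sum_{\alpha\in\ca_k}\sum_{m=1}^{N(k,\alpha)}\mu(\qa)[m_{\qa}(|Q_kf|)]^p\}^{1/p}$ in both (i) and (ii), will be handled by Jensen's inequality (available because $p\geq 1$), which yields the upper bound $\{\sum_{k=0}^N\|Q_kf\|_{L^p(\cx)}^p\}^{1/p}$. Since each $Q_k$ has a kernel satisfying \eqref{10.23.3}, Lemma \ref{6.15.1}(ii) combined with Schur's test gives $\|Q_kf\|_{L^p(\cx)}\ls\|f\|_{L^p(\cx)}$ uniformly for $p\in[1,\infty]$, and since only $N+1$ values of $k$ appear, this contribution is controlled by $\|f\|_{L^p(\cx)}\leq\|f\|_{\ilipsb}$ (resp.\ $\|f\|_{\ilipst}$).

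For the high-frequency part, every $k\geq N+1\geq 1$ inherits the cancellation condition from Definition \ref{10.23.2}(v), so the main manoeuvre of Proposition \ref{lipinbtl} applies: writing $Q_kf(x)=\int_\cx Q_k(x,y)[f(x)-f(y)]\,d\mu(y)$ and estimating via \eqref{10.23.3} gives $|Q_kf(x)|\ls\sum_{j=0}^\infty\delta^{j\gamma}J_{j-k}(f;x)$ for (i), and the analogous $\ell^q$-averaged bound $|Q_kf(x)|\ls\{\sum_{j=0}^\infty\delta^{j\gamma q}[J_{j-k}(f;x)]^q\}^{1/q}$ for (ii) (the latter requiring $q>\omega/[\omega+(\beta\wedge\gamma)]$ exactly as in Proposition \ref{lipinbtl}(ii)). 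After re-indexing by $l:=j-k$ and swapping the summations over $k\geq N+1$ and $j\geq 0$, the geometric factor in $k$ (where $\gamma>s$ is crucial) sums to produce a single expression in $l\in\zz$ whose weight is bounded by $\delta^{lsq}$ for $l\leq 0$ and decays exponentially in $l$ for $l>0$.

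The portion with $l\leq 0$ corresponds to averages $J_l(f;\cdot)$ over small balls $B(\cdot,\widetilde{C}\delta^{-l})$, which is precisely what the $\widetilde L_b(s,p,q;\cx)$ (resp.\ $\widetilde L_t$) characterization of $\ilipsb$ (resp.\ $\ilipst$) from Proposition \ref{llblt} controls; this portion reproduces the Minkowski / power-mean manipulations from the proof of Proposition \ref{lipinbtl}(i) with the index range simply restricted. The portion with $l>0$ involves $J_l$ averaging over large balls, and here I will use the universal bound $\|J_l(f;\cdot)\|_{L^p(\cx)}\ls\|f\|_{L^p(\cx)}$, valid for every $l\in\zz$ and every $p\in[1,\infty]$ by Jensen's inequality, Fubini's theorem and the doubling estimate of Lemma \ref{6.15.1}(i). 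The principal obstacle will arise in (ii), where one must move the outer $L^p(\cx)$-norm inside an $\ell^q$-summation over $l>0$: at the endpoint $p=1$ the $L^p$-boundedness of the Hardy--Littlewood maximal operator is unavailable as a substitute. I plan to circumvent this by a case split: the integral Minkowski inequality applies whenever $q\leq p$, while for $q>p\geq 1$ (which forces $q>1$) the elementary inequality $(\sum a_l^q)^{1/q}\leq\sum a_l$ followed by the $L^p$-triangle inequality does the job, with summability in both cases guaranteed by the exponential decay of the weights on the region $l>0$.
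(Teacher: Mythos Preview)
Your approach is correct and essentially the same as the paper's: both treat the low-frequency terms via the $L^p$-boundedness of $Q_k$ and the high-frequency terms via cancellation plus the pointwise bounds \eqref{qfi}, \eqref{qf6}, \eqref{qf7}, exactly as in Proposition \ref{lipinbtl}.

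The only difference lies in the final bookkeeping. For (i), your detour through Proposition \ref{llblt} and the subsequent small/large-ball split is unnecessary: by Definition \ref{lipi}(iv) one has $\|f\|_{\ilipsb}=\|f\|_{L^p(\cx)}+\|f\|_{\lipsb}$ with the \emph{homogeneous} norm $\|f\|_{\lipsb}$ already summing over all $k\in\zz$, so the paper simply bounds the high-frequency part by $\|f\|_{\lipsb}\le\|f\|_{\ilipsb}$ directly. For (ii), however, your extra care is well placed: since $\|f\|_{\ilipst}$ in Definition \ref{lipii}(ii) involves only $\sum_{k\ge 0}$, the paper's terse step ``$\|f\|_{L^p(\cx)}+\|f\|_{\lipst}\sim\|f\|_{\ilipst}$'' is not a definition and requires an analogue of Proposition \ref{llblt} for $\lipst$, which is not stated explicitly. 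Your case split ($q\le p$ via Minkowski; $q>p\ge 1$ via $\ell^1\hookrightarrow\ell^q$ then the $L^p$-triangle inequality) together with the Fubini/doubling bound $\|J_l(f;\cdot)\|_{L^p(\cx)}\lesssim\|f\|_{L^p(\cx)}$ supplies precisely this missing justification at the endpoint $p=1$, where the maximal-operator shortcut is unavailable.
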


\begin{proof}
We first prove (i). By the definition of $\ilipsb$
with $s,\ p$, and  $q$ as in this proposition,
we know that $\ilipsb\subset L^p(\cx)$.
From this and $p\in[1,\infty]$, it follows that
$L^p(\cx)\subset (\icgg)'$ and hence
$\ilipsb\subset (\icgg)'$
for any given $\beta,\ \gamma\in(0,\eta)$.

Let $\{Q_k\}_{k=0}^\infty$ be an exp-IATI and $f\in\ilipsb$.
Note that, by \cite[Proposition 2.2(iii)]{hlyy}, we
know that, for any $p\in[1,\infty]$ and
$k\in\{0,\dots,N\}$,
\begin{equation}\label{q0}
\|Q_kf\|_{L^p(\cx)}\lesssim\|f\|_{L^p(\cx)}.
\end{equation}
From this, \eqref{qfi}, and the Minkowski
inequality, it follows that
\begin{align}\label{ibesov}
\|f\|_{\ihb}&=\left\{\sum_{k=0}^N\sum_{\alpha
\in \ca_k}\sum_{m=1}^{N(k,\alpha)}
\mu\left(\qa\right)\left[m_{\qa}\left(|Q_k(f)|
\right)\r]^p\right\}^{1/p}\\
&\qquad+\left[\sum_{k=N+1}^{\infty} \delta^{-ksq}
\|Q_k(f)\|_{\lp}^q\right]^{1/q}\notag\\
&\lesssim \|f\|_{L^p(\cx)}+\left\{\sum_{k=N+1}
^\infty\delta^{-ksq}
\left[\sum_{j=0}^\infty\delta^{j\gamma}\left\{\int_{\cx}
\left[J_{j-k}(f;x)\right]^p\,d\mu(x)\right\}^{1/p}
\right]^q\right\}^{1/q},\notag
\end{align}
where $J_{j-k}(f;x)$ is as in \eqref{jkf}.
If $q\in(0,1]$, by \eqref{ibesov} and \eqref{r}, we find that
\begin{align}\label{ibeslip}
\|f\|_{\ihb}&\lesssim \|f\|_{L^p(\cx)}+
\left[\sum_{k=N+1}^\infty\delta^{-ksq}
\sum_{j=0}^\infty\delta^{j\gamma q}
\left\{\int_{\cx}
\left[J_{j-k}(f;x)\right]^p\,d\mu(x)\right\}^{q/p}
\right]^{1/q}\\
&\lesssim \|f\|_{L^p(\cx)}+\left[
\sum_{j=0}^\infty\delta^{j\gamma q}
\sum_{k=N+1}^\infty\delta^{-ksq}\left\{\int_{\cx}
\left[J_{j-k}(f;x)\right]^p\,d\mu(x)\right\}^{q/p}
\right]^{1/q}\notag\\
&\lesssim \|f\|_{L^p(\cx)}+\|f\|_{\lipsb}\sim\|f\|
_{\ilipsb};\notag
\end{align}
while, if $q\in(1,\infty]$, by \eqref{ibesov} and
the Minkowski inequality, we obtain
\begin{align*}
\|f\|_{\ihb}&\lesssim \|f\|_{L^p(\cx)}+
\sum_{j=0}^\infty\delta^{j\gamma}
\left[\sum_{k=N+1}^\infty\delta^{-ksq}\left\{\int_{\cx}
\left[J_{j-k}(f;x)\right]^p\,d\mu(x)\right\}^{q/p}
\right]^{1/q}\notag\\
&\lesssim \|f\|_{L^p(\cx)}+\|f\|_{\lipsb}\sim\|f\|_{\ilipsb}.
\end{align*}
This, combined with \eqref{ibeslip},
then finishes the proof of (i).

Now, we prove (ii). Assume that $f\in\ilipst$
with $s,\ p$, and  $q$ as in this proposition.
By the definition of $\ilipst$, we know that
$\ilipst\subset L^p(\cx)$.
Since $p\in[1,\infty]$ and
$L^p(\cx)\subset(\icgg)'$, it follows that
$\ilipst\subset (\icgg)'$
for any given $\beta,\ \gamma\in(0,\eta)$.

Let $\{Q_k\}_{k=0}^\infty$ be an exp-IATI.
If $p\in[1,\infty)$, using \eqref{q0} and an
argument similar to
that used in the estimations of \eqref{tllip1} and
\eqref{tllip2}, we find that
\begin{align*}
\|f\|_{\ihf}&\lesssim\left\{\sum_{k=0}^N
\sum_{\alpha \in \ca_k}\sum_{m=1}^{N(k,
\alpha)}\mu\left(\qa\right)\left[m_{\qa}
\left(|Q_k(f)|\right)\r]^p\right\}^{1/p}+\left\|
\left\{\sum_{k=N+1}^\infty
\delta^{-ksq}|Q_kf|^q\right\}^{1/q}\right\|
_{L^p(\cx)}\\
&\lesssim \|f\|_{L^p(\cx)}+\|f\|_{\lipst}\sim \|f\|
_{\ilipst}.\notag
\end{align*}
If $p=\infty$, then, from \eqref{10.23.3} and \eqref{9.14.1x},
it follows that, for any $k\in\{0,\dots,N\}
$ and $x\in\cx$,
\begin{equation*}
|Q_kf(x)|=\left|\int_{\cx}Q_k(x,y)f(y)\,d\mu(y)
\right|\lesssim \|f\|_{L^\infty(\cx)}
\end{equation*}
and hence, for any $k\in\{0,\dots,N\}$,
$\alpha\in\ca_k$,
and $m\in \{1,\dots,N(k,\alpha)\}$,
\begin{equation}\label{tllinf}
m_{\qa}(|Q_kf|)\lesssim\|f\|_{L^\infty(\cx)}.
\end{equation}
Moreover, by an argument similar to that used in
\eqref{tlinfty}, we find that,  for any
$j\in\{N+1,N+2,\dots\}$ and $\alpha\in\ca_{j}$,
\begin{align*}
\frac{1}{\mu(Q_\alpha^j)}\int_{Q_\alpha^j}
\sum_{k=j}^\infty\delta^{-ksq}|Q_kf(x)|^q\,d\mu(x)
\lesssim\|f\|_{\dot{L}_t(s,\infty,q;\cx)}^q.\notag
\end{align*}
Using this and \eqref{tllinf}, we conclude that
$f\in\ihfi$ and
$$\|f\|_{\ihfi}\lesssim \|f\|_{L^\infty(\cx)} +\|f\|
_{\lipst}\sim\|f\|_{\ilipst},$$
which completes the proof of (ii) and hence of Proposition \ref{ilipinbtl}.
\end{proof}

To establish the converse of Proposition
\ref{ilipinbtl}, we need the following
notion of local lower bounds; see, for instance,
\cite[Definition 1.1]{hhhlp20}.

\begin{definition}\label{ilower}
Suppose that $(\cx, d, \mu)$ is a space of
homogeneous type
with upper dimension $\omega$ as in
\eqref{eq-doub}.
The measure $\mu$ is said to have a
\emph{local lower bound} $Q$
with $Q\in(0,\infty)$, if
there exists a positive constant $C$ such that,
for any $x\in\cx$ and $r\in(0,1]$,
$$\mu(B(x,r))\geq Cr^Q.$$
\end{definition}

\begin{remark}
\begin{enumerate}
\item[{\rm(i)}] Differently from the global lower
bound $Q$, which only makes sense
for any $Q\in(0,\omega]$,
of $\cx$ in Definition \ref{lower}, the local lower
bound $Q$ of $\cx$ indeed makes sense for
any $Q\in(0,\infty)$.
\item[{\rm(ii)}] Observe that the local lower bound $Q$ of $\cx$ in
\cite[Definition 1.1]{hhhlp20} is directly
required  to be the same as
in the upper dimension $\omega$ of $\cx$.
\end{enumerate}
\end{remark}

Next, we establish the converse of Proposition \ref{ilipinbtl}.

\begin{proposition}\label{ibtlinlip}
Let $\eta$ be as in Definition \ref{10.23.2},
$\omega$ as in \eqref{eq-doub},
$\beta,\ \gamma\in(0,\eta)$,
$s\in(0,\beta\wedge\gamma)$,
and $Q\in[\omega,\infty)$.
\begin{enumerate}
\item[{\rm(i)}] If $p\in[1,\infty]$
and $q\in(0,\infty]$, then $\ihb\subset\ilips$;
\item[{\rm(ii)}] If $\cx$ has a local lower
bound $Q$,
$p\in(Q/(Q+s),1)$ satisfies
$-\eta<s-Q/p$, and
$q\in(0,\infty]$, then $\ihb\subset\ilips$;
\item[{\rm(iii)}] If $p\in(1,\infty)$ and
$q\in(1,\infty]$,
then $\ihf\subset\ilipst$;
\item[{\rm(iv)}] If $\cx$ has
a local lower bound $Q$,
$p\in(Q/
(Q+s),1]$ satisfies $-\eta<s-Q/p$, and
$q\in(Q/[Q+
(\beta\wedge\gamma)],\infty]$,
then $\ihf\subset\ilipstu$ with
$u\in (0,\min\{p,q\})$.
\end{enumerate}
\end{proposition}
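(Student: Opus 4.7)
The plan is to follow the same scheme as in the proof of Proposition \ref{btlinlip}, but replacing the homogeneous discrete Calder\'on reproducing formula (Lemma \ref{crf}) with its inhomogeneous version (Lemma \ref{icrf}). By Proposition \ref{llblt}, the Lipschitz norms appearing on the right in (i) and (ii) can be equivalently computed as $\|f\|_{L^p(\cx)}$ plus a one-sided sum over $k\in\zz_+$; the Lipschitz-type spaces in (iii) and (iv) are already defined with a one-sided sum. Accordingly, I will first verify $f\in L^p(\cx)$ together with the quantitative bound $\|f\|_{L^p(\cx)}\lesssim \|f\|_{\ihb}$ (resp., $\|f\|_{\ihf}$), and then separately bound the remaining Lipschitz seminorm on scales $l\in\zz_+$.

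For the $L^p$ bound, I will reconstruct $f$ using Lemma \ref{icrf}. The low-frequency blocks ($k\in\{0,\ldots,N\}$) involve the mean values $Q^{k,m}_{\alpha,1}(f)=m_{\qa}(Q_kf)$, which appear explicitly as the first summand in the $\ihb$- and $\ihf$-norms; combined with the size bound \eqref{4.23x} and Lemma \ref{6.15.1}(ii) for $\widetilde{Q}_k$, these deliver an $L^p$-bound in terms of the first summand of the norm. The high-frequency blocks ($k\geq N+1$) are handled as in the proof of Proposition \ref{btlinlip}. For parts (ii) and (iv), where $p<1$, I will invoke an inhomogeneous analog of Lemma \ref{embed}, whose proof only involves balls of radius $\leq 1$ and hence requires merely the local lower bound $Q\in[\omega,\infty)$ of Definition \ref{ilower}.

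To estimate the Lipschitz seminorm, I will choose $l_0\in\zz$ with $\widetilde{C}\in(\delta^{l_0+1},\delta^{l_0}]$ and, for each $l\in\zz_+$, $x\in\cx$, and $y\in B(x,\widetilde{C}\delta^l)$, split
$$f(x)-f(y)={\rm I}(x,y)+{\rm II}(x,y),$$
where ${\rm I}$ collects the summands with $k<l+l_0$ and ${\rm II}$ those with $k\geq l+l_0$. For all but finitely many $l$ (those with $l+l_0>N$), only high-frequency blocks appear, so the analysis is verbatim the treatment of ${\rm J}_3$--${\rm J}_4$ (for Besov) and ${\rm J}_5$--${\rm J}_8$ (for Triebel--Lizorkin) in Proposition \ref{btlinlip}: the regularity estimate \eqref{4.23y} yields a $\delta^{l\beta}$ gain on ${\rm I}$, the size estimate \eqref{4.23x} controls ${\rm II}$, and the final summation uses Lemmas \ref{9.14.1}, \ref{10.18.5}, and \ref{fsvv} together with \eqref{r}, the H\"older inequality, and the Minkowski inequality, yielding bounds by $\|f\|_{\ihb}$ or $\|f\|_{\ihf}$. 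The finitely many remaining low scales produce only low-frequency blocks, which contribute $m_{\qa}(|Q_kf|)$-type terms that are absorbed by the first summand of the $\ihb$- and $\ihf$-norms.

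The principal obstacle lies in parts (ii) and (iv), the sub-$1$ integrability regime. There, neither the Minkowski nor the Fefferman--Stein inequality applies directly at the $L^p$ level, so I will combine \eqref{r} with Lemma \ref{9.14.1} at several junctures, and will need to justify the inhomogeneous analog of Lemma \ref{embed} to obtain $f\in L^1_{\loc}(\cx)$. Carefully tracing the role of the upper dimension $\omega$ in the proof of \cite[Proposition 3.1]{hhhlp20} shows that the essential hypothesis is only a lower bound on $\mu(B(x,r))$ for $r\in(0,1]$, matching Definition \ref{ilower}; this is precisely the reason why a local (rather than global) lower bound $Q\geq\omega$ suffices in the inhomogeneous statement.
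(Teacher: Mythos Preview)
Your approach is essentially the same as the paper's and is correct in its broad strokes: use Lemma \ref{icrf} to reconstruct $f$, obtain the $L^p$ bound from the first summand of the $\ihb$/$\ihf$-norm plus the high-frequency argument of Proposition \ref{btlinlip}, and invoke the inhomogeneous embedding (the paper's Lemma \ref{iembed}, which indeed only needs the local lower bound $Q\in[\omega,\infty)$) for parts (ii) and (iv).

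There is one imprecision you should correct. You write that ``for all but finitely many $l$ (those with $l+l_0>N$), only high-frequency blocks appear,'' and that the low-frequency blocks contribute only at ``finitely many remaining low scales.'' This is not right: the low-frequency terms $k\in\{0,\ldots,N\}$ from Lemma \ref{icrf} are present in your piece ${\rm I}$ at \emph{every} scale $l\in\zz_+$, not just at finitely many. The paper handles this by isolating the low-frequency contribution (its term ${\rm Y}_1$, respectively ${\rm Z}_1+{\rm Z}_2$ and ${\rm R}_1+{\rm R}_2$) across all $l$ and splitting it further: for small $l$ one uses the size bound \eqref{4.23x} directly, while for large $l$ one uses the regularity estimate \eqref{4.23y} to extract a factor $\delta^{l\beta}$ (the $k$-sum being finite), which then sums in $l$ against $\delta^{-lsq}$ since $s<\beta$. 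Once you adjust your decomposition to treat the low-frequency blocks at all scales in this way, the remaining high-frequency analysis is indeed verbatim the ${\rm J}_3$--${\rm J}_8$ arguments from Proposition \ref{btlinlip}, exactly as you outline.
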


To show Proposition \ref{ibtlinlip}, we need the
following embedding lemma for inhomogeneous
Besov and Triebel--Lizorkin spaces on spaces
of homogeneous type,
which
comes from the combination of
\cite[Theorem 1.3]{hhhlp20} and
\cite[Theorem 7.4]{hwyy20}
with some slight modifications based on the
observation that it is easy to check that
\cite[Proposition 3.1]{hhhlp20} still holds true
when $Q\in[\omega, \infty)$. Recall that
Lemma \ref{iembed} was proved in
\cite[Theorem 1.3]{hhhlp20} only in the case
$Q=\omega$.

\begin{lemma}\label{iembed}
Let $\eta$ be as in Definition \ref{10.23.2},
$\omega$ as in \eqref{eq-doub}, $Q\in [\omega,\infty)$,
$\beta,\ \gamma\in(0,\eta)$,
$s\in(0,\beta\wedge\gamma)$, and $p\in
(Q/(Q+s),1]$ satisfy $-\eta<s-Q/p$.
Assume that $\cx$ has a local lower bound $Q$.
\begin{enumerate}
\item[{\rm (i)}] If $q\in (0,\infty]$, then
$$B_{p,q}^s(\cx)\subset B_{1,q}^{s-Q(
\frac{1}{p}-1)}(\cx).$$
\item[{\rm (ii)}] If $q\in (Q/[Q+
(\beta\wedge\gamma)],\infty]$, then
$$F_{p,q}^s(\cx)\subset F_{1,q}^{s-Q(
\frac{1}{p}-1)}(\cx).$$
\end{enumerate}
\end{lemma}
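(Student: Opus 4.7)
The plan is to combine the inhomogeneous discrete Calder\'on reproducing formula (Lemma \ref{icrf}) with a sharp pointwise bound that exploits the local lower bound $\mu(B(x,r))\ge Cr^Q$ on small balls. This follows the template of \cite[Theorem 1.3]{hhhlp20}, which proves these embeddings in the case $Q=\omega$. The key observation to justify is that the technical estimate \cite[Proposition 3.1]{hhhlp20} underlying that proof remains valid for any $Q\in[\omega,\infty)$, since the lower bound (and not the upper dimension $\omega$) is the genuine driver of the Sobolev-type shift $s\mapsto s-Q(1/p-1)$.

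More concretely, I would fix $f\in B^s_{p,q}(\cx)$ (resp., $f\in F^s_{p,q}(\cx)$) and use Lemma \ref{icrf} to expand $Q_jf(x)$ for $j>N$ in terms of the sampled values $Q_kf(\ya)$ (and, for $k\le N$, the averages $Q^{k,m}_{\alpha,1}(f)$) against the kernels $Q_j\widetilde{Q}_k(x,\ya)$. A standard almost-orthogonality argument based on the size, cancellation, and regularity conditions of the exp-IATI yields, for some $\eta'\in(s,\beta\wedge\gamma)$, the bound $|Q_j\widetilde{Q}_k(x,y)|\lesssim \delta^{|j-k|\eta'}R_\gamma(x,y;j\wedge k)$. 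Combined with the extended form of \cite[Proposition 3.1]{hhhlp20}, which under the local lower bound should assert
$$\sum_{\alpha,m}\mu(\qa)\,|a_\alpha^{k,m}|\,R_\gamma(x,\ya;j\wedge k)\lesssim \delta^{-(k\wedge j)Q(1/p-1)}\left[M\left(\sum_{\alpha,m}|a_\alpha^{k,m}|^p\mathbf{1}_{\qa}\right)(x)\right]^{1/p}$$
for $p\in(Q/(Q+\gamma),1]$, this produces a master pointwise estimate of the form
$$|Q_jf(x)|\lesssim \sum_{k=0}^\infty\delta^{|j-k|\eta'}\delta^{-(k\wedge j)Q(1/p-1)}\left[M(F_k)(x)\right]^{1/p},$$
where $F_k$ encodes the $p$-th power of $Q_kf$ or of its low-frequency average.

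For part (i), I would take the $L^1$-norm of this pointwise bound in $x$, apply the $L^{1/p}$-boundedness of $M$ to absorb the maximal function (since $1/p\ge 1$), multiply by $\delta^{-j[s-Q(1/p-1)]}$, and sum in $j$ via a discrete Young-type convolution inequality, using \eqref{r} when $q\le 1$; the hypotheses $p>Q/(Q+s)$ and $\eta'>s$ are exactly what is needed so that the effective convolution kernel in $j,k$ is summable. For part (ii), I would instead keep the $\ell^q$-norm in $j$ inside first, then apply the Fefferman--Stein vector-valued maximal inequality (Lemma \ref{fsvv}) at the level of $L^{1/p}(\ell^{q/p})$; this is where the restriction $q>Q/[Q+(\beta\wedge\gamma)]$ enters, ensuring that $p$ can be chosen close enough to $1$ for the maximal inequality to apply after exchange of norms. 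The low-frequency portion $k\in\{0,\dots,N\}$ of the norm is treated analogously, using only the $L^p$-boundedness of the averaging operators $Q^{k,m}_{\alpha,1}$ and the size estimate for $Q_k$.

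The main obstacle will be verifying that the argument in \cite[Proposition 3.1]{hhhlp20} genuinely does not require $Q=\omega$: its proof couples a Whitney-type decomposition of the sampling sum with the volume lower bound, and one must check that the counting of cubes at a given scale (which naturally invokes the upper dimension $\omega$) and the integration over them (which involves the exponent $Q$) decouple cleanly, so that the final pointwise constant depends only on $Q$ in the critical exponent $\delta^{-(k\wedge j)Q(1/p-1)}$. Once this decoupling is established, the remainder of the proof is a routine adaptation of the arguments in \cite[Theorem 1.3]{hhhlp20} and \cite[Theorem 7.4]{hwyy20}, with the exponent $\omega$ systematically replaced by $Q$ wherever the volume lower bound is invoked.
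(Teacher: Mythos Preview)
Your proposal is correct and takes essentially the same approach as the paper: the paper does not give a detailed proof of this lemma but simply observes that it follows from combining \cite[Theorem 1.3]{hhhlp20} with \cite[Theorem 7.4]{hwyy20}, once one checks that \cite[Proposition 3.1]{hhhlp20} extends from $Q=\omega$ to any $Q\in[\omega,\infty)$---precisely the key verification you isolate. One minor imprecision: in part (ii) the Fefferman--Stein inequality should be applied at level $L^{1/r}(\ell^{q/r})$ for an auxiliary $r\in(Q/(Q+\gamma),\min\{1,q\})$ rather than at $L^{1/p}(\ell^{q/p})$, since $q/p>1$ is not guaranteed.
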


\begin{remark}
We point that the local lower bound $Q$ of $\cx$
in Proposition \ref{ibtlinlip} and Lemma \ref{iembed} is
required to be in $[\omega,\fz)$. It is still
unclear whether or not the conclusions
of Proposition \ref{ibtlinlip} and Lemma \ref{iembed}
still hold true if the local lower bound $Q$ of $\cx$ belongs to $(0,\omega)$.
Indeed, we prove Proposition \ref{ibtlinlip} by using Lemma \ref{iembed}, while
Lemma \ref{iembed} strongly depends on \cite[Proposition 3.1]{hhhlp20}
which needs $Q=\omega$ but it is easy to check that \cite[Proposition 3.1]{hhhlp20}
still holds true when $Q\in (\omega,\fz)$. This results in the restriction of Proposition \ref{ibtlinlip}
on the local lower bound of $\cx$.
\end{remark}

Now, we prove Proposition \ref{ibtlinlip}.
\begin{proof}[Proof of Proposition \ref{ibtlinlip}]
We first prove (i). By an argument similar to
that used in the proof of \cite[Theorem 6.12]{whhy},
we know that, for any $p\in[1,\infty]$ and
$q\in(0,\infty]$,
$\ihb\subset L^p(\cx)$.  Using this and
Proposition \ref{llblt}(ii), to prove (i), it suffices
to show that
\begin{equation}\label{leftihb}
\left\{\sum_{l=0}^\infty\delta^{-lsq}
\left[\frac{1}{\mu(B(x,\widetilde{C}\delta^l))}
\int_{B(x,\widetilde{C}\delta^l)}
|f(x)-f(y)|^p\,d\mu(y)\,d\mu(x)\right]^{q/p}
\right\}^{1/q}\lesssim\|f\|_{\ihb}.
\end{equation}
Note that there exists a unique $l_0\in\zz$ such that
$\widetilde{C}\in(\delta^{l_0+1},\delta^{l_0}]$.
Since the space $\lipsb$ is independent
of the choice of $\widetilde{C}$, without loss
of generality,
we may assume that $l_0\in(-\infty,0]\cap\zz$.
By Lemma \ref{icrf}, we have
\begin{align}\label{deleftihb}
&\left\{\sum_{l=0}^\infty\delta^{-lsq}
\left[\int_{\cx}\frac{1}{\mu(B(x,\widetilde{C}
\delta^l))}\int_{B(x,\widetilde{C}\delta^l)}
|f(x)-f(y)|^p\,d\mu(y)\,d\mu(x)\right]^{q/p}
\right\}^{1/q}\\
&\qquad \lesssim\left\{\sum_{l=0}
^\infty\delta^{-lsq}
\left[\int_{\cx}\frac{1}{\mu(B(x,\delta^{l+l_0}))}\right.\right.\notag\\
&\qquad\qquad\times\int_{B(x,\delta^{l+l_0})}
\left\{\sum_{\alpha \in \ca_0}\sum_{m=1}
^{N(0,\alpha)}\int_{\qo}
\left|\widetilde{Q}_0(x,z)-\widetilde{Q}_0(x,z)
\right|\,d\mu(z)Q^{0,m}_{\alpha,1}(f)\right.\notag\\
&\qquad\qquad+\sum_{k=1}^N\sum_{\alpha \in
\ca_k}\sum_{m=1}^{N(k,\alpha)}
\mu\left(\qa\right)\notag\\
&\qquad\qquad\left.\left.\left.\times\left|
\widetilde{Q}_k(x,\ya)-\widetilde{Q}_k(y,\ya)
\vpz{\sum_{m=1}^{N(k,\alpha)}}\right|
Q^{k,m}_{\alpha,1}(f)\right\}^p\,d\mu(y)
\,d\mu(x)\right]^{q/p}\right\}^{1/q}\notag\\
&\qquad\qquad + \left\{\sum_{l=0}^{N+2-
l_0}\delta^{-lsq}
\left[\int_{\cx}\frac{1}{\mu(B(x,\delta^{l+l_0}))}
\int_{B(x,\delta^{l+l_0})}
\left\{\sum_{k=N+1}^\infty\sum_{\alpha \in
\ca_k}\sum_{m=1}^{N(k,\alpha)}
\mu\left(\qa\right)\right.\right.\right.\notag\\
&\qquad\qquad\left.\left.\left.\times \left|
\widetilde{Q}_k(x,\ya)-\widetilde{Q}_k(y,\ya)
\vpz{\sum_{m=1}^{N(k,\alpha)}}\right|
\left|Q_kf\left(\ya\right)\right|\right\}
^p\,d\mu(y)\,d\mu(x)\right]^{q/p}\right\}^{1/q}\notag\\
&\qquad\qquad + \left\{\sum_{l=N+3-l_0}
^\infty\delta^{-lsq}
\left[\int_{\cx}\frac{1}{\mu(B(x,\delta^{l+l_0}))}
\int_{B(x,\delta^{l+l_0})}
\left\{\sum_{k=N+1}^{l+l_0-1}\sum_{\alpha \in
\ca_k}\sum_{m=1}^{N(k,\alpha)}
\mu\left(\qa\right)\right.\right.\right.\notag\\
&\qquad\qquad\left.\left.\left.\times \left|
\widetilde{Q}_k(x,\ya)-\widetilde{Q}_k(y,\ya)
\vpz{\sum_{m=1}^{N(k,\alpha)}}\right|
\left|Q_kf\left(\ya\right)\right|\right\}
^p\,d\mu(y)\,d\mu(x)\right]^{q/p}\right\}^{1/q}\notag\\
&\qquad\qquad + \left\{\sum_{l=N+3-l_0}
^\infty\delta^{-lsq}
\left[\int_{\cx}\frac{1}{\mu(B(x,\delta^{l+l_0}))}
\int_{B(x,\delta^{l+l_0})}
\left\{\sum_{k=l+l_0-1}^\infty\sum_{\alpha \in
\ca_k}\sum_{m=1}^{N(k,\alpha)}
\mu\left(\qa\right)
\right.\right.\right.\notag\\
&\qquad\qquad\left.\left.\left.\times
\vpz{\sum_{m=1}^{N(k,\alpha)}}
\left|
\widetilde{Q}_k(x,\ya)-\widetilde{Q}_k(y,\ya)\right|
\left|Q_kf\left(\ya\right)\right|\right\}
^p\,d\mu(y)\,d\mu(x)\right]^{q/p}\right\}^{1/q}\notag\\
&\qquad =: {\rm Y}_1+{\rm Y}_2+{\rm Y}_3+
{\rm Y}_4\notag.
\end{align}

We first estimate ${\rm Y}_1$. By \cite[(4.6)]{whhy}, we further obtain
\begin{align}\label{y1}
{\rm Y}_1&\lesssim \left\{\sum_{l=0}
^\infty\delta^{-lsq}
\left[\int_{\cx}\frac{1}{\mu(B(x,\delta^{l+l_0}))}
\int_{B(x,\delta^{l+l_0})}
\left\{\sum_{k=0}^N\sum_{\alpha \in \ca_k}
\sum_{m=1}^{N(k,\alpha)}
\mu\left(\qa\right)m_{\qa}(|Q_kf|)\right.\right.\right.\\
&\qquad\left.\left.\left.\times\sup_{z\in\qa}
\left|\widetilde{Q}_k(x,z)-\widetilde{Q}_k(y,z)\right|
\right\}^p\,d\mu(y)\,d\mu(x)\right]^{q/p}
\right\}^{1/q}\notag\\
&\lesssim \left\{\sum_{l=0}^{N+1-l_0}\delta^{-lsq}
\left[\int_{\cx}\frac{1}{\mu(B(x,\delta^{l+l_0}))}
\int_{B(x,\delta^{l+l_0})}
\left\{\sum_{k=0}^N\sum_{\alpha \in \ca_k}
\sum_{m=1}^{N(k,\alpha)}
\mu\left(\qa\right)m_{\qa}(|Q_kf|)\right.\right.\right.\notag\\
&\qquad\left.\left.\left.
\times\left[\sup_{z\in\qa}
\left|\widetilde{Q}_k(x,z)\right|+\sup_{z\in\qa}
\left|\widetilde{Q}_k(y,z)\right|\right]
\right\}^p\,d\mu(y)\,d\mu(x)\right]^{q/p}
\right\}^{1/q}\notag\\
&\qquad+\left\{\sum_{l=N+2-l_0}
^\infty\delta^{-lsq}
\left[\int_{\cx}\frac{1}{\mu(B(x,\delta^{l+l_0}))}
\int_{B(x,\delta^{l+l_0})}
\left\{\sum_{k=0}^N\sum_{\alpha \in \ca_k}
\sum_{m=1}^{N(k,\alpha)}
\mu\left(\qa\right)\right.\right.\right.\notag\\
&\qquad\left.\left.\left.\times m_{\qa}(|Q_kf|)
\sup_{z\in\qa}\left|\widetilde{Q}_k(x,z)-\widetilde{Q}_k(y,z)\right|
\right\}^p\,d\mu(y)\,d\mu(x)\right]^{q/p}
\right\}^{1/q}\notag\\
&=: {\rm Y}_{1,1}+{\rm Y}_{1,2}.\notag
\end{align}

To estimate ${\rm Y}_{1,1}$, note that,
for any $x\in\cx$ and $z\in\qa$,
$\delta^k+d(x,z)\sim\delta^k+d(x,\ya)$.
By this, the H\"older inequality, and Lemmas
\ref{6.15.1}(i) and \ref{9.14.1}, we obtain
\begin{align*}
&\sum_{k=0}^N\sum_{\alpha \in \ca_k}
\sum_{m=1}^{N(k,\alpha)}
\mu\left(\qa\right)m_{\qa}(|Q_kf|)
\left[\sup_{z\in\qa}
\left|\widetilde{Q}_k(x,z)\right|+\sup_{z\in\qa}
\left|\widetilde{Q}_k(y,z)\right|\right]\\
&\qquad\lesssim \left\{\sum_{k=0}
^N\sum_{\alpha \in \ca_k}\sum_{m=1}^{N(k,\alpha)}
\mu\left(\qa\right)\left[m_{\qa}(|Q_kf|)]
\right]^p\left[\sup_{z\in\qa}
\left|\widetilde{Q}_k(x,z)\right|+\sup_{z\in\qa}
\left|\widetilde{Q}_k(y,z)\right|\right]\right\}^{1/p},
\end{align*}
which, together with Lemmas \ref{6.15.1}(i) and  \ref{9.14.1},
\eqref{qxqy}, \eqref{9.14.1x}, and the
arbitrariness of $\ya$, further implies that
\begin{align*}
{\rm Y}_{1,1}&\lesssim \left\{\sum_{l=0}^{N+1-
l_0}\delta^{-lsq}
\sum_{k=0}^N\sum_{\alpha\in\ca_k}
\sum_{m=1}^{N(k,\alpha)}
\mu(\qa)\left[m_{\qa}(|Q_kf|)\right]^p\int_{\cx}
R_\gamma\left(x,\ya;k\right)
\,d\mu(x)\right\}^{1/p}\\
&\lesssim \|f\|_{\ihb},
\end{align*}
where $R_\gamma(x,\ya;k)$ is as in \eqref{rxy}.

Now, we estimate ${\rm Y}_{1,2}$.
By \eqref{4.23y},
the fact that $\delta^k+d(x,z)\sim\delta^k+d(x,
\ya)$ for any $x\in\cx$,
the regularity of $\widetilde{Q}_k$, and
$z\in\qa$, we find that
\begin{align}\label{y12}
{\rm Y}_{1,2}&\lesssim \left\{\sum_{l=N+2-l_0}
^\infty\delta^{-lsq}
\left[\int_{\cx}\frac{1}{\mu(B(x,\delta^{l+l_0}))}
\int_{B(x,\delta^{l+l_0})}
\left\{\sum_{k=0}^N\sum_{\alpha \in \ca_k}
\sum_{m=1}^{N(k,\alpha)}
\mu\left(\qa\right)m_{\qa}(|Q_kf|)\right.\right.\right.\\
&\qquad\left.\left.\left.\times\left[\frac{d(x,y)}
{\delta^k+d(x,\ya)}\right]^\beta
R_\gamma\left(x,\ya;k\right)
\right\}^p\,d\mu(y)\,d\mu(x)\right]^{q/p}\right\}^{1/q}\notag\\
&\lesssim \left\{\sum_{l=N+2-l_0}
^\infty\delta^{l(\beta-s)q}
\left[\int_{\cx}\frac{1}{\mu(B(x,\delta^{l+l_0}))}
\int_{B(x,\delta^{l+l_0})}
\left\{\sum_{k=0}^N\sum_{\alpha \in \ca_k}
\sum_{m=1}^{N(k,\alpha)}
\mu\left(\qa\right)m_{\qa}(|Q_kf|)\right.\right.\right.\notag\\
&\qquad\left.\left.\left.\times
\vpz{\frac{1}
{V_{\delta^k}(x)+V(x,\ya)}
\left[\frac{\delta^k}{\delta^k+d(x,\ya)}
\right]^\gamma}R_\gamma\left(x,\ya;k\right)
\right\}^p\,d\mu(y)\,d\mu(x)\right]^{q/p}
\right\}^{1/q}.\notag
\end{align}
From this, $s\in(0,\beta)$, $p\in[1,\infty]$,
and an argument similar to that used in the
estimation of ${\rm Y}_{1,1}$, we deduce that
$${\rm Y}_{1,2}\lesssim \|f\|_{\ihb}.$$

Next, we estimate ${\rm Y}_2$. We first note
that, by the H\"older inequality,
\eqref{4.23x}, and Lemma \ref{9.14.1}, for any
fixed $\widetilde{s}\in(0,s)$,
\begin{align*}
&\sum_{k=N+1}^\infty\sum_{\alpha \in \ca_k}
\sum_{m=1}^{N(k,\alpha)}
\mu\left(\qa\right)\left|\widetilde{Q}_k(x,\ya)-
\widetilde{Q}_k(y,\ya)\right|
\left|Q_kf\left(\ya\right)\right|\\
&\qquad\lesssim\left\{\sum_{k=N+1}
^\infty\delta^{-k\widetilde{s}p}\sum_{\alpha
\in \ca_k}\sum_{m=1}^{N(k,\alpha)}
\mu\left(\qa\right)\left|\widetilde{Q}_k(x,\ya)-
\widetilde{Q}_k(y,\ya)\right|
\left|Q_kf\left(\ya\right)\right|^p\right\}^{1/p}\\
&\qquad \qquad\times\left\{\sum_{k=N+1}
^\infty\delta^{k\widetilde{s}p'}\sum_{\alpha \in
\ca_k}\sum_{m=1}^{N(k,\alpha)}
\mu\left(\qa\right)\left|\widetilde{Q}_k(x,\ya)-
\widetilde{Q}_k(y,\ya)\right|\right\}^{1/p'}\\
&\qquad\lesssim\left\{\sum_{k=N+1}
^\infty\delta^{-k\widetilde{s}p}\sum_{\alpha
\in \ca_k}\sum_{m=1}^{N(k,\alpha)}
\mu\left(\qa\right)\left|\widetilde{Q}_k(x,\ya)-
\widetilde{Q}_k(y,\ya)\right|
\left|Q_kf\left(\ya\right)\right|^p\right\}^{1/p}.
\end{align*}
From this, \eqref{qxqy}, \eqref{9.14.1x}, and the
arbitrariness of $\ya$, we deduce that
\begin{align*}
{\rm Y}_2&\lesssim \left\{\sum_{l=0}^{N+2-
l_0}\delta^{-lsq}
\left(\sum_{k=N+1}^\infty\delta^{-ksq}
\left[\sum_{\alpha\in\ca_k}\sum_{m=1}^{N(k,\alpha)}
\mu(\qa)\left|Q_kf\left(\ya\right)\right|
^p\right]^{q/p}\right)\right.\\
&\qquad\left.\times\left[\sum_{k=N+1}
^\infty\delta^{k(s-\widetilde{s})p(q/p)'}
\right]^{1/(q/p)'}\right\}^{1/p}\\
&\lesssim \|f\|_{\ihb},
\end{align*}
where we used \eqref{r} when $q/p\in(0,1]$, or
the H\"older inequality when $q/p\in (1,\infty]$.

We now estimate ${\rm Y}_3$. By
\eqref{4.23y}, we conclude that
\begin{align}\label{y3}
{\rm Y}_3&\lesssim\left\{\sum_{l=N+3-l_0}
^\infty\delta^{-lsq}
\left[\int_{\cx}\frac{1}{\mu(B(x,\delta^{l+l_0}))}\int_{B(x,\delta^{l+l_0})}
\left\{\sum_{k=N+1}^{l+l_0-1}\sum_{\alpha \in \ca_k}\sum_{m=1}^{N(k,\alpha)}
\mu\left(\qa\right)\right.\right.\right.\\
&\qquad\left.\left.\left.\times\left|Q_kf\left(\ya\right)
\right|\left[\frac{d(x,y)}{\delta^k+d(x,\ya)}
\right]^\beta
R_\gamma\left(x,\ya;k\right)\right\}^p\,d\mu(y)\,d\mu(x)
\right]^{q/p}\right\}^{1/q}\notag\\
&\lesssim\left\{\sum_{l=N+3-l_0}
^\infty\delta^{l(\beta-s)q}
\left[\int_{\cx}\frac{1}{\mu(B(x,\delta^{l+l_0}))}\int_{B(x,\delta^{l+l_0})}
\left\{\sum_{k=N+1}^{l+l_0-1}\delta^{-k\beta}
\sum_{\alpha \in \ca_k}\sum_{m=1}^{N(k,\alpha)}
\mu\left(\qa\right)\right.\right.\right.\notag\\
&\qquad\left.\left.\left.\times
\vpz{\sum_{k=N+1}^{l+l_0-1}}
\left|Q_kf\left(\ya\right)\right|
R_\gamma\left(x,\ya;k\right)\right\}^p\,d\mu(y)\,d\mu(x)
\right]^{q/p}\right\}^{1/q},\notag
\end{align}
where $R_\gamma(x,\ya;k)$ is as in \eqref{rxy}.
Due to $p\in[1,\infty]$, by \eqref{y3}, the
H\"older inequality, Lemma \ref{9.14.1}, and
\eqref{9.14.1x}, we have
\begin{align*}
{\rm Y}_3&\lesssim\left\{\sum_{l=N+3-l_0}
^\infty\delta^{l(\widetilde{\beta}-s)q}
\left[\int_{\cx}\frac{1}{\mu(B(x,\delta^{l+l_0}))}
\int_{B(x,\delta^{l+l_0})}
\left\{\sum_{k=N+1}^{l+l_0-1}\delta^{-
k\widetilde{\beta}p}\sum_{\alpha \in \ca_k}
\sum_{m=1}^{N(k,\alpha)}
\mu\left(\qa\right)\right.\right.\right.\\
&\qquad\left.\left.\left.\times
\left|Q_kf\left(\ya\right)\right|^p
\vpz{\frac{1}{V_{\delta^k}(x)+V(x,\ya)}
\left[\frac{\delta^k}{\delta^k+d(x,\ya)}
\right]^\gamma} R_\gamma\left(x,\ya;k\right)\right\}\,d\mu(y)\,d\mu(x)
\right]^{q/p}\right\}^{1/q}\\
&\lesssim \left\{\sum_{l=N+3-l_0}
^\infty\delta^{l(\widetilde{\beta}-s)q}
\left[\sum_{k=N+1}^{l+l_0-1}\delta^{-
k\widetilde{\beta}p}\sum_{\alpha \in \ca_k}
\sum_{m=1}^{N(k,\alpha)}
\mu\left(\qa\right)\left|Q_kf\left(\ya\right)
\right|^p\right]^{q/p}\right\}^{1/q},
\end{align*}
where $\widetilde{\beta}\in(s,\beta)$ is a fixed constant.
From this, \eqref{r} when $q/p\in(0,1]$, or the
H\"older inequality
when $q/p\in(1,\infty]$, and the arbitrariness of
$\ya$, we deduce that
$${\rm Y}_3\lesssim \|f\|_{\ihb}.$$

Finally, using an argument similar to that used
in the estimation of ${\rm Y}_2$, we also
obtain ${\rm Y}_4\lesssim \|f\|_{\ihb}$. This
finishes the proof of \eqref{leftihb} and hence of (i).

Next, we show (ii). Assume that $p\in(Q/
(Q+s),1)$,
$q\in(0,\infty]$, $\{Q_k\}_{k=0}
^\infty$ is an exp-IATI, and $f\in\ihb$. Then, by Lemma
\ref{icrf}, for any $x\in\cx$, let
\begin{align*}
\widetilde{f}(x) &:= \sum_{\alpha \in
\ca_0}\sum_{m=1}^{N(0,\alpha)}\int_{\qo}
\widetilde{Q}_0(x,y)\,d\mu(y)Q^{0,m}_{\alpha,1}(f)\\
&\qquad+\sum_{k=1}^N\sum_{\alpha \in \ca_k}
\sum_{m=1}^{N(k,\alpha)}
\mu\left(\qa\right)\widetilde{Q}_k(x,
\ya)Q^{k,m}_{\alpha,1}(f)\\
& \qquad+\sum_{k=N+1}^\infty\sum_{\alpha
\in \ca_k}\sum_{m=1}^{N(k,\alpha)}
\mu\left(\qa\right)\widetilde{Q}_k(x,
\ya)Q_kf\left(\ya\right).
\end{align*}
We claim that $\widetilde{f}$ is well defined
and, moreover, $\widetilde{f}\in L^p(\cx)$.
Indeed, from \eqref{9.14.1x}, we deduce that
\begin{align*}
\left\|\widetilde{f}\right\|_{L^p(\cx)}
^p&\lesssim\sum_{k=0}
^N\sum_{\alpha\in\ca_k}\sum_{m=1}^{N(k,\alpha)}
\left[\mu(\qa)m_{\qa}(|Q_kf|)\right]^p
\int_{\cx}\left[R_\gamma\left(x,
\ya;k\right)\right]^p\,d\mu(x)\\
&\qquad+\sum_{k=N+1}
^\infty\sum_{\alpha\in\ca_k}\sum_{m=1}^{N(k,\alpha)}
\left[\mu(\qa)|Q_kf(\ya)|\right]^p
\int_{\cx}\left[R_\gamma\left(x,
\ya;k\right)\right]^p\,d\mu(x)\\
&\lesssim \sum_{k=0}^N\sum_{\alpha\in\ca_k}
\sum_{m=1}^{N(k,\alpha)}
\mu(\qa)\left[m_{\qa}(|Q_kf|)\right]^p+
\sum_{k=N+1}^\infty\sum_{\alpha\in\ca_k}
\sum_{m=1}^{N(k,\alpha)}
\mu(\qa)|Q_kf(\ya)|^p\\
&\lesssim \|f\|_{\ihb}^p,
\end{align*}
where $R_\gamma(x,\ya;k)$ is as in \eqref{rxy} and we used \eqref{r} when $q/p\in(0,1]$, or
the H\"older inequality
when $q/p\in(1,\infty]$. Using this,
we obtain $\widetilde{f}\in L^p(\cx)$
and $\|\widetilde{f}\|_{L^p(\cx)}\lesssim \|f\|_{\ihb}$.
Moreover, by Lemma \ref{iembed}, we
conclude that
$f\in B_{1,q}^{s-Q(\frac{1}{p}-1)}(\cx)$.
Thus, from an argument similar to that used in
the proof of \cite[Theorem 6.12]{whhy},
we deduce
that $f=\widetilde{f}$ in $(\icgg)'$.
Based on this, in what follows,
we do not need to distinguish $f$ or
$\widetilde{f}$.
Using this and Proposition \ref{llblt}(ii),
we know that, to prove
(i), it suffices to show that  \eqref{leftihb}
still holds true for any given $s$, $p$, and $q$ as in (ii).
Therefore, by \eqref{deleftihb} and \eqref{y1},
we need to estimate
${\rm Y}_{1,1}$, ${\rm Y}_{1,2}$, ${\rm Y}_2$, $
{\rm Y}_3$, and ${\rm Y}_4$, respectively.

To estimate ${\rm Y}_{1,1}$,  by \eqref{r}, \eqref{4.23x},
Lemma \ref{6.15.1}(i), \eqref{qxqy},
\eqref{9.14.1x}, and the arbitrariness of $\ya$, we have
\begin{align*}
{\rm Y}_{1,1}&\lesssim \left\{\sum_{l=0}^{N+1-
l_0}\delta^{-lsq}
\sum_{k=0}^N\sum_{\alpha\in\ca_k}
\sum_{m=1}^{N(k,\alpha)}
\left[\mu(\qa)m_{\qa}(|Q_kf|)\right]^p
\int_{\cx}\left[R_\gamma\left(x,\ya;k\right)
\right]^p\,d\mu(x)\right\}^{1/p}\\
&\lesssim \|f\|_{\ihb}.
\end{align*}

To estimate ${\rm Y}_{1,2}$, from \eqref{y12} and
an argument similar to
that used in the estimation of ${\rm Y}_{1,1}$, we deduce that
$${\rm Y}_{1,2}\lesssim \|f\|_{\ihb}.$$

Now, we estimate ${\rm Y}_2$.
By $p\in(Q/(Q+s),1]$,
\eqref{r}, \eqref{qxqy}, \eqref{4.23x}, and
\eqref{9.14.1x}, we find that
\begin{align*}
{\rm Y}_2&\lesssim \left[\sum_{l=0}^{N+2-
l_0}\delta^{-lsq}
\left\{\sum_{k=N+1}
^\infty\sum_{\alpha\in\ca_k}\sum_{m=1}^{N(k,\alpha)}
\left[\mu(\qa)\left|Q_kf\left(\ya\right)\right|
\right]^p\right.\right.\\
&\qquad\left.\left.\times\int_{\cx}\left[\frac{1}
{V_{\delta^k}(\ya)+{V(x,\ya)}}\right]^p
\left[\frac{\delta^k}{\delta^k+d(x,\ya)}
\right]^{\gamma p}\,d\mu(x)\right\}^{q/p}\right]^{1/p}\\
&\lesssim \left[\sum_{l=0}^{N+2-l_0}\delta^{-lsq}
\left\{\sum_{k=N+1}
^\infty\sum_{\alpha\in\ca_k}\sum_{m=1}^{N(k,\alpha)}
\mu(\qa)\left|Q_kf\left(\ya\right)\right|
^p\right\}^{q/p}\right]^{1/p}.
\end{align*}
If $q/p\in(0,1]$, by this, \eqref{r},
$s\in(0,\beta\wedge\gamma)$,  and the
arbitrariness of $\ya$, we know that
\begin{align*}
{\rm Y}_2&\lesssim \left[\sum_{l=0}^{N+2-
l_0}\delta^{-lsq}
\sum_{k=N+1}^\infty\delta^{-ksq}\left
\{\sum_{\alpha\in\ca_k}\sum_{m=1}^{N(k,\alpha)}
\mu(\qa)\left|Q_kf\left(\ya\right)\right|
^p\right\}^{q/p}\right]^{1/p}\\
&\lesssim \|f\|_{\ihb};
\end{align*}
while, if $q/p\in(0,\infty]$, by this, the H\"older
inequality, and the arbitrariness of $\ya$, we
find that
\begin{align*}
{\rm Y}_2&\lesssim \left\{\sum_{l=0}^{N+2-
l_0}\delta^{-lsq}
\left(\sum_{k=N+1}^\infty\delta^{-ksq}
\left[\sum_{\alpha\in\ca_k}\sum_{m=1}^{N(k,\alpha)}
\mu(\qa)\left|Q_kf\left(\ya\right)\right|
^p\right]^{q/p}\right)\left[\sum_{k=N+1}^\infty
\delta^{lsp(q/p)'}\right]^{1/(q/p)'}\right\}^{1/p}\\
&\lesssim \|f\|_{\ihb}.
\end{align*}

Next, we estimate ${\rm Y}_3$. Since
$p\in(Q/(Q+s),1]$,
from \eqref{y3}, \eqref{r}, and \eqref{9.14.1x}, it
follows that
$${\rm Y}_3\lesssim\left\{\sum_{l=N+3-l_0}
^\infty\delta^{l(\beta-s)q}
\left[\sum_{k=N+1}^{l+l_0-1}\delta^{-k\beta p}
\left\{\sum_{\alpha \in \ca_k}\sum_{m=1}^{N(k,\alpha)}
\mu\left(\qa\right)\left|Q_kf\left(\ya\right)\right|^p
\right\}\right]^{q/p}\right\}^{1/q}.$$
If $q/p\in(0,1]$, by this, \eqref{r}, and the
arbitrariness of $\ya$, we conclude that
\begin{align*}
{\rm Y}_3&\lesssim\left\{\sum_{k=N+1}
^\infty\delta^{-k\beta p}
\left[\sum_{l=k-l_0+1}^\infty\delta^{l(\beta-
s)q}\right]\left[\sum_{\alpha \in \ca_k}
\sum_{m=1}^{N(k,\alpha)}
\mu\left(\qa\right)\left|Q_kf\left(\ya\right)
\right|^p
\right]^{q/p}\right\}^{1/q}\\
&\lesssim\|f\|_{\ihb};
\end{align*}
while, if $p/q\in(1,\infty]$, by this, the H\"older
inequality, $s\in(0,\beta)$, and the arbitrariness
of $\ya$, we conclude that
\begin{align*}
{\rm Y}_3&\lesssim\left\{\sum_{l=N+3-l_0}
^\infty\delta^{l(\beta-s)q}
\left[\sum_{k=N+1}^{l+l_0-1}\delta^{-k(\beta+s)
p/2}\left\{\sum_{\alpha \in \ca_k}\sum_{m=1}^{N(k,\alpha)}
\mu\left(\qa\right)\left|Q_kf\left(\ya\right)\right|^p
\right\}^{q/p}\right]\right.\\
&\qquad\times\left.\left[\sum_{k=N+1}
^{l+l_0-1}\delta^{-k(\beta-s)p(q/p)'/
2}\right]^{\frac{1}{(q/p)'}\frac{q}{p}}\right\}^{1/q}\\
&\lesssim\|f\|_{\ihb}.
\end{align*}

Finally, using an argument similar to that used
in the estimation of ${\rm Y}_2$,
we also obtain ${\rm Y}_4\lesssim \|f\|_{\ihb}$.
Combining the estimates of  ${\rm Y}_{1,1}$, $
{\rm Y}_{1,2}$, ${\rm Y}_2$,
${\rm Y}_3$, and ${\rm Y}_4$, we show that
\eqref{leftihb} still holds true
for any given $s$, $p$, and $q$ as in (ii), which completes
the proof of (ii).

Now, we show (iii). Using an argument similar to
that used in the proof of
\cite[Theorem 6.12 (II)]{whhy}, we know that,
for any given $s\in(0,\beta\wedge\gamma)$,
$p\in(1,\infty)$, and $q\in(1,\infty]$,
$$\ihf\subset L^p(\cx)\quad\text {and, for
any}\ f\in\ihf,\
\|f\|_{L^p(\cx)}\lesssim\|f\|_{\ihf}.$$
By this and Proposition \ref{llblt}(iii), to prove
(iii), it suffices to show that,
for any $f\in\ihf$ with
$s\in(0,\beta\wedge\gamma)$, $p\in(1,\infty)
$, and $q\in(1,\infty]$,
\begin{equation}\label{lpihf}
\left\|\left\{\sum_{l=0}^\infty\delta^{-lsq}
\left[\frac{1}{\mu(B(\cdot,\widetilde{C}
\delta^l))}\int_{B(\cdot,\widetilde{C}\delta^l)}
|f(\cdot)-f(y)|\,d\mu(y)\right]^q\right\}^{1/q}
\right\|_{L^p(\cx)}\lesssim\|f\|_{\ihf}.
\end{equation}

Next, we show \eqref{lpihf}. Let $\{Q_k\}_{k=0}
^\infty$ be an exp-IATI and $f\in\ihf$. Note that
there exists a unique $l_0\in\zz$ such that
$\widetilde{C}\in(\delta^{l_0+1},\delta^{l_0}]$.
Since the space $\ilipst$ is independent of $
\widetilde{C}$ in Definition \ref{lipii}(ii),
without loss of generality, we
may assume $l_0\in[N+2, \infty)\cap\zz$.
By Lemma \ref{icrf},  for any $l\in\zz_+$,
$x\in\cx$,
and $y\in B(x,\widetilde{C}\delta^l)$,  we write
\begin{align*}
f(x)-f(y)&= \sum_{\alpha \in \ca_0}\sum_{m=1}
^{N(0,\alpha)}\int_{\qo}
\left[\widetilde{Q}_0(x,z)-\widetilde{Q}_0(y,z)
\right]\,d\mu(z)Q^{0,m}_{\alpha,1}(f)\\
&\qquad+\sum_{k=1}^N\sum_{\alpha \in \ca_k}
\sum_{m=1}^{N(k,\alpha)}
\mu\left(\qa\right)\left[\widetilde{Q}_k(x,\ya)-
\widetilde{Q}_k(y,\ya)\right]Q^{k,m}_{\alpha,1}(f)\\
& \qquad+\sum_{k=N+1}^{l+l_0-1}\sum_{\alpha
\in \ca_k}\sum_{m=1}^{N(k,\alpha)}
\mu\left(\qa\right)\left[\widetilde{Q}_k(x,\ya)-
\widetilde{Q}_k(y,\ya)
\right]Q_kf\left(\ya\right)\\
&\qquad +\sum_{k=l+l_0}^\infty\sum_{\alpha
\in \ca_k}\sum_{m=1}^{N(k,\alpha)}\cdots\\
&=: {\rm Z}_1(x,y)+{\rm Z}_2(x,y)+{\rm Z}_3(x,y)
+{\rm Z}_4(x,y).
\end{align*}

We first estimate ${\rm Z}_1$ and ${\rm Z}_2$.
By \cite[(4.6)]{whhy}, \eqref{4.23y}, and
Lemmas \ref{6.15.1}(i) and \ref{10.18.5}, we have
\begin{align*}
&|{\rm Z}_1(x,y)|+|{\rm Z}_2(x,y)|\\
&\qquad\lesssim
\sum_{k=0}^N\sum_{\alpha\in\ca_k}
\sum_{m=1}^{N(k,\alpha)}
\mu\left(\qa\right)m_{\qa}(|Q_kf|)
\left[\frac{d(x,y)}{\delta^k+d(x,
\ya)}\right]^\beta R_\gamma\left(x,\ya;k\right)\\
&\qquad\lesssim \delta^{l\beta}\sum_{k=0}
^N\sum_{\alpha\in\ca_k}\sum_{m=1}^{N(k,
\alpha)}\mu\left(\qa\right)m_{\qa}(|Q_kf|)
R_\gamma\left(x,\ya;k\right)\\
&\qquad\lesssim \delta^{l\beta}\sum_{k=0}
^NM\left(\sum_{\alpha\in\ca_k}\sum_{m=1}
^{N(k,\alpha)}m_{\qa}(|Q_kf|)\mathbf{1}_{\qa}\right)(x),
\end{align*}
where $R_\gamma(x,\ya;k)$ is as in \eqref{rxy}.

Similarly, we obtain
$$|{\rm Z}_3(x,y)|\lesssim \sum_{k=N+1}
^{l+l_0-1}\delta^{(l-k)\beta}
M\left(\sum_{\alpha\in\ca_k}\sum_{m=1}^{N(k,\alpha)}
|Q_kf\left(\ya\right)|\mathbf{1}_{\qa}\right)(x).$$
From these estimates, the H\"older inequality,
Lemma \ref{fsvv}, and the arbitrariness of
$\ya$,  we deduce that
\begin{align*}
&\left\|\left\{\sum_{l=0}^\infty\delta^{-lsq}
\left[\frac{1}{\mu(B(\cdot,\widetilde{C}
\delta^l))}\int_{B(\cdot,\widetilde{C}\delta^l)}
|{\rm Z}_1(\cdot,y)+{\rm Z}_2(\cdot,y)+{\rm Z}
_3(\cdot,y)|\,d\mu(y)\right]^q\right\}^{1/q}
\right\|_{L^p(\cx)}\\
&\qquad\lesssim\left\|\sum_{k=0}
^NM\left(\sum_{\alpha\in\ca_k}\sum_{m=1}^{N(k,\alpha)}
m_{\qa}(|Q_kf|)\mathbf{1}_{\qa}\right)\right\|_{L^p(\cx)}\\
&\qquad\qquad +\left\|\left\{\sum_{k=N+1}
^{l+l_0-1}\delta^{-ksq}
\left[M\left(\sum_{\alpha\in\ca_k}
\sum_{m=1}^{N(k,\alpha)}\left|
Q_kf\left(\ya\right)\right|\mathbf{1}_{\qa}
\right)\right]^q\right\}^{1/q}\right\|_{L^p(\cx)}\\
&\qquad\lesssim\|f\|_{\ihf}.
\end{align*}

Now, we estimate ${\rm Z}_4$. By
\eqref{4.23x} and  Lemma \ref{10.18.5}, we have
\begin{align*}
|{\rm Z}_4(x,y)|&\lesssim \sum_{k=l+l_0}
^\infty\left[M\left(\sum_{\alpha\in\ca_k}
\sum_{m=1}^{N(k,\alpha)}
|Q_kf\left(\ya\right)|\mathbf{1}_{\qa}\right)(x)
+M\left(\sum_{\alpha\in\ca_k}\sum_{m=1}^{N(k,\alpha)}
|Q_kf\left(\ya\right)|\mathbf{1}_{\qa}\right)(y)\right].
\end{align*}
From this and the H\"older inequality, we
further deduce that
\begin{align*}
&\frac{1}{\mu(B(x,\widetilde{C}\delta^l))}
\int_{B(x,\widetilde{C}\delta^l)}
|{\rm Z}_4(x,y)|\,d\mu(y)\\
&\qquad\lesssim \delta^{ls/
2}\left\{\sum_{k=l+l_0}^\infty\delta^{-ksq/2}
\left[M\left(\sum_{\alpha\in\ca_k}
\sum_{m=1}^{N(k,\alpha)}|Q_kf\left(\ya\right)|
\mathbf{1}_{\qa}\right)(x)\right.\right.\\
&\qquad \qquad\left.\left.+M\circ
M\left(\sum_{\alpha\in\ca_k}\sum_{m=1}^{N(k,\alpha)}
|Q_kf\left(\ya\right)|\mathbf{1}_{\qa}\right)(x)
\right]^q\right\}^{1/q},
\end{align*}
which, together with Lemma \ref{fsvv} and the
arbitrariness of $\ya$, implies that
\begin{align*}
&\left\|\left\{\sum_{l=0}^\infty\delta^{-lsq}
\left[\frac{1}{\mu(B(\cdot,\widetilde{C}
\delta^l))}\int_{B(\cdot,\widetilde{C}\delta^l)}
|{\rm Z}_4(\cdot,y)|\,d\mu(y)\right]^q\right\}
^{1/q}\right\|_{L^p(\cx)}\\
&\qquad\lesssim\left\|\left\{\sum_{k=N+1}
^\infty\delta^{-ksq}
\left[M\left(\sum_{\alpha\in\ca_k}\sum_{m=1}
^{N(k,\alpha)}
|Q_kf\left(\ya\right)|\mathbf{1}_{\qa}\right)
\right.\right.\right.\\
&\qquad \qquad\left.\left.\left.+M\circ
M\left(\sum_{\alpha\in\ca_k}\sum_{m=1}^{N(k,\alpha)}
|Q_kf\left(\ya\right)|\mathbf{1}_{\qa}\right)
\right]^q\right\}^{1/q}\right\|_{L^p(\cx)}\\
&\qquad\lesssim\|f\|_{\ihf}.
\end{align*}
This finishes the proof of \eqref{lpihf} and hence of (iii).

Finally, we show (iv). By an argument similar to
that used in the proof of (ii),
we know that, for any given $s$, $p$, and $q$ as in
this proposition,
$$\ihf\subset L^p(\cx)\quad\text {and, for any}\quad f\in\ihf,\
\|f\|_{L^p(\cx)}\lesssim\|f\|_{\ihf}.$$

Let $\{Q_k\}_{k=0}^\infty$ be an exp-IATI and
$f\in\ihf$. Note that
there exists a unique $l_0\in\zz$ such that
$\widetilde{C}\in(\delta^{l_0+1},\delta^{l_0}]$.
In what follows,
we assume $l_0\in[N+2, \infty)\cap\zz$.
By Lemma \ref{icrf},
for any $l\in\zz_+$, $x\in\cx$, and $y\in B(x,
\widetilde{C}\delta^l)$, we write
\begin{align*}
f(x)-f(y)&= \sum_{\alpha \in \ca_0}\sum_{m=1}
^{N(0,\alpha)}\int_{\qo}
\left[\widetilde{Q}_0(x,z)-\widetilde{Q}_0(y,z)
\right]\,d\mu(z)Q^{0,m}_{\alpha,1}(f)\\
&\qquad+\sum_{k=1}^N\sum_{\alpha \in \ca_k}
\sum_{m=1}^{N(k,\alpha)}
\mu\left(\qa\right)\left[\widetilde{Q}_k(x,\ya)-
\widetilde{Q}_k(y,\ya)\right]Q^{k,m}_{\alpha,1}(f)\\
& \qquad+\sum_{k=N+1}^{l+l_0-1}\sum_{\alpha
\in \ca_k}\sum_{m=1}^{N(k,\alpha)}
\mu\left(\qa\right)\left[\widetilde{Q}_k(x,\ya)-
\widetilde{Q}_k(y,\ya)
\right]Q_kf\left(\ya\right)\\
&\qquad +\sum_{k=l+l_0}^\infty\sum_{\alpha
\in \ca_k}\sum_{m=1}^{N(k,\alpha)}\cdots\\
&=: {\rm R}_1(x,y)+{\rm R}_2(x,y)+{\rm R}_3(x,y)
+{\rm R}_4(x,y).
\end{align*}

We first estimate ${\rm R}_1$ and ${\rm R}_2$.
By \cite[(4.6)]{whhy}, \eqref{4.23y}, and
Lemmas \ref{6.15.1}(i) and \ref{10.18.5}, we have,
for any fixed $r\in (Q/(Q+\gamma),1]$
and for any $x\in\cx$ and
$y\in B(x,\widetilde{C}\delta^{l})$,
\begin{align*}
|{\rm R}_1(x,y)|+|{\rm R}_2(x,y)|&\lesssim
\sum_{k=0}^N\sum_{\alpha\in\ca_k}
\sum_{m=1}^{N(k,\alpha)}
\mu\left(\qa\right)m_{\qa}(|Q_kf|)
\left[\frac{d(x,y)}{\delta^k+d(x,\ya)}\right]^\beta
R_\gamma\left(x,\ya;k\right)\\
&\lesssim \delta^{l\beta}\sum_{k=0}
^N\sum_{\alpha\in\ca_k}\sum_{m=1}^{N(k,\alpha)}
\mu\left(\qa\right)m_{\qa}(|Q_kf|)
R_\gamma\left(x,\ya;k\right)\\
&\lesssim \delta^{l\beta}\sum_{k=0}
^N\left[M\left(\sum_{\alpha\in\ca_k}
\sum_{m=1}^{N(k,\alpha)}
\left[m_{\qa}(|Q_kf|)\right]^r\mathbf{1}_{\qa}
\right)(x)\right]^{\frac{1}{r}},
\end{align*}
where $R_\gamma(x,\ya;k)$ is as in \eqref{rxy}.
Using these estimates and Lemma \ref{fsvv}, and
choosing $r\in (Q/(Q+\gamma),p)$,
we conclude that
\begin{align*}
&\left\|\left\{\sum_{l=0}^\infty\delta^{-lsq}
\left[\frac{1}{\mu(B(\cdot,\widetilde{C}
\delta^l))}\int_{B(\cdot,\widetilde{C}\delta^l)}
|{\rm R}_1(\cdot,y)+{\rm R}_2(\cdot,y)|
^u\,d\mu(y)\right]^{\frac{q}{u}}\right\}^{\frac{1}
{q}}\right\|_{L^p(\cx)}\\
&\qquad\lesssim\left\|\left\{\sum_{l=0}
^\infty\delta^{l(\beta-s)q}\sum_{k=0}^N
\left[M\left(\sum_{\alpha\in\ca_k}\sum_{m=1}
^{N(k,\alpha)}\left[m_{\qa}(|Q_kf|)\right]^r
\mathbf{1}_{\qa}\right)\right]^{\frac{q}{r}}
\right\}^{\frac{1}{q}}\right\|_{L^p(\cx)}\\
&\qquad\lesssim\sum_{k=0}^N\left\|
\left\{M\left(\sum_{\alpha\in\ca_k}\sum_{m=1}
^{N(k,\alpha)}\left[m_{\qa}(|Q_kf|)\right]^r
\mathbf{1}_{\qa}\right)\right\}^{\frac{1}{r}}
\right\|_{L^p(\cx)}\\
&\qquad\lesssim\sum_{k=0}^N\left\|
\left\{\sum_{\alpha\in\ca_k}\sum_{m=1}^{N(k,
\alpha)}\left[m_{\qa}(|Q_kf|)\right]^r
\mathbf{1}_{\qa}\right\}^{\frac{1}{r}}\right\|
_{L^p(\cx)}\\
&\qquad\sim\left\{\sum_{k=0}
^N\sum_{\alpha\in\ca_k}\sum_{m=1}^{N(k,\alpha)}
\left[m_{\qa}(|Q_kf|)\right]^p\mu(\qa)\right\}^{\frac{1}{p}}
\lesssim\|f\|_{\ihf}.
\end{align*}

Now, we estimate ${\rm R}_3$.
Similarly to the estimations of ${\rm R}_1$ and
${\rm R}_2$,  we have,
for any $x\in\cx$ and $y\in B(x,\widetilde{C}\delta^{l})$,
$$|{\rm R}_3(x,y)|\lesssim \sum_{k=N+1}
^{l+l_0-1}\delta^{(l-k)\beta}
\left[M\left(\sum_{\alpha\in\ca_k}\sum_{m=1}
^{N(k,\alpha)}
\left|Q_kf\left(\ya\right)\right|^r\mathbf{1}
_{\qa}\right)(x)\right]^{\frac{1}{r}},$$
which further implies that
\begin{align*}
&\left\{\sum_{l=0}^\infty\delta^{-lsq}
\left[\frac{1}{\mu(B(x,\widetilde{C}\delta^l))}
\int_{B(x,\widetilde{C}\delta^l)}
|{\rm R}_3(x,y)|^u\,d\mu(y)\right]^{\frac{q}{u}}
\right\}^{\frac{1}{q}}\\
&\qquad \lesssim \left[\sum_{l=0}
^\infty\delta^{-lsq}
\left\{\sum_{k=N+1}^{l+l_0-1}\delta^{(l-k)\beta}
\left[M\left(\sum_{\alpha\in\ca_k}
\sum_{m=1}^{N(k,\alpha)}\left|
Q_kf\left(\ya\right)\right|^r\mathbf{1}_{\qa}
\right)(x)
\right]^{\frac{1}{r}}\right\}^q\right]^{\frac{1}{q}}\\
&\qquad\lesssim
\left\{\sum_{k=N+1}^\infty\delta^{-ksq}
\left[M\left(\sum_{\alpha\in\ca_k}\sum_{m=1}
^{N(k,\alpha)}
\left|Q_kf\left(\ya\right)\right|^r\mathbf{1}
_{\qa}\right)(x)\right]^{\frac{q}{r}}\right\}
^{\frac{1}{q}},
\end{align*}
where, in the last inequality, we used \eqref{r}
when $q\in(Q/[Q+
(\beta\wedge\gamma),1]$,
or the H\"older inequality when $q\in(1,\infty]$,
and $s\in (0,\beta)$.
Using this, Lemma \ref{fsvv}, and the
arbitrariness of $\ya$,
and choosing $r\in (Q/(Q+
\gamma), \min\{p,q\})$,
we find that
\begin{align*}
&\left\|\left\{\sum_{l=0}^\infty\delta^{-lsq}
\left[\frac{1}{\mu(B(\cdot,\widetilde{C}
\delta^l))}\int_{B(\cdot,\widetilde{C}\delta^l)}
|{\rm R}_3(\cdot,y)|^u\,d\mu(y)\right]^{\frac{q}
{u}}\right\}^{\frac{1}{q}}\right\|_{L^p(\cx)}\\
&\qquad\lesssim \left\|\left\{\sum_{k=N+1}
^\infty\delta^{-ksq}
\left[M\left(\sum_{\alpha\in\ca_k}\sum_{m=1}
^{N(k,\alpha)}
\left|Q_kf\left(\ya\right)\right|^r\mathbf{1}
_{\qa}\right)\right]^{\frac{q}{r}}\right\}^{\frac{1}
{q}}\right\|_{L^p(\cx)}\\
&\qquad\sim \left\|\left\{\sum_{k=N+1}
^\infty\delta^{-ksq}
\left[M\left(\sum_{\alpha\in\ca_k}\sum_{m=1}^{N(k,\alpha)}
\left|Q_kf\left(\ya\right)\right|^r\mathbf{1}
_{\qa}\right)\right]^{\frac{q}{r}}\right\}^{\frac{r}{q}}
\right\|_{L^{\frac{p}{r}}(\cx)}^{\frac{1}{r}}\\
&\qquad\lesssim \left\|\left\{\sum_{k=N+1}
^\infty\delta^{-ksq}
\sum_{\alpha\in\ca_k}\sum_{m=1}^{N(k,\alpha)}
\left|Q_kf\left(\ya\right)\right|^q\mathbf{1}
_{\qa}\right\}^{\frac{r}{q}}
\right\|_{L^{\frac{p}{r}}(\cx)}^{\frac{1}{r}}\lesssim\|f\|_{\ihf}.
\end{align*}

Finally, from an argument similar to that used
in the proof of Proposition \ref{btlinlip}(iii),
we deduce that
$$\left\|\left\{\sum_{l=0}^\infty\delta^{-lsq}
\left[\frac{1}{\mu(B(\cdot,\widetilde{C}
\delta^l))}\int_{B(\cdot,\widetilde{C}\delta^l)}
|{\rm R}_4(\cdot,y)|^u\,d\mu(y)\right]^{\frac{q}
{u}}\right\}^{\frac{1}{q}}\right\|_{L^p(\cx)}
\lesssim\|f\|_{\ihf}.$$
This finishes the proof of (iii) and hence of
Proposition \ref{ibtlinlip}.
\end{proof}

\begin{remark}
We point out that Proposition \ref{ibtlinlip}(iv) is new even when $\cx$ is an RD-spaces.
\end{remark}

Combining Propositions \ref{pro-lipi}(i),
\ref{ilipinbtl}, and \ref{ibtlinlip},
we obtain the following difference
characterization of inhomogeneous Besov
and Triebel--Lizorkin spaces; we omit the details.

\begin{theorem}\label{idc}
Let $\eta$ be as in Definition \ref{10.23.2},
$\beta,\ \gamma\in(0,\eta)$, and
$s\in(0,\beta\wedge\gamma)$.
\begin{enumerate}
\item[{\rm (i)}] If $p\in[1,\infty]$ and
$q\in(0,\infty]$, then
$\ilips=\ilipsb=\ihb$
with equivalent (quasi-)norms.
\item[{\rm (ii)}] If $p\in(1,\infty)$ and
$q\in(1,\infty]$, then
$\ilipst=\ihf$
with equivalent norms.
\end{enumerate}
\end{theorem}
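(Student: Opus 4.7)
The plan is to obtain both parts of Theorem \ref{idc} as immediate consequences of the three propositions already proved in Section \ref{s4}, in exact parallel with how Theorem \ref{dc} is deduced from Propositions \ref{pro-lipi}, \ref{lipinbtl}, and \ref{btlinlip} in the homogeneous setting. No new analysis is needed; the argument is purely a chain of set-theoretic inclusions whose norms have already been quantitatively compared.

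For part (i), I would argue by a three-step ring of embeddings valid for any $p\in[1,\infty]$ and $q\in(0,\infty]$. First, Proposition \ref{pro-lipi}(i) gives the inclusion $\ilips\subset\ilipsb$ together with the norm estimate $\|f\|_{\ilipsb}\lesssim\|f\|_{\ilips}$. Second, Proposition \ref{ilipinbtl}(i), which was proved using the exp-IATI decomposition and the bound $\|Q_kf\|_{L^p(\cx)}\lesssim \|f\|_{L^p(\cx)}$ for $k\in\{0,\ldots,N\}$ from \cite[Proposition 2.2(iii)]{hlyy}, yields $\ilipsb\subset\ihb$ with $\|f\|_{\ihb}\lesssim\|f\|_{\ilipsb}$. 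Third, Proposition \ref{ibtlinlip}(i) closes the loop via $\ihb\subset\ilips$ with $\|f\|_{\ilips}\lesssim\|f\|_{\ihb}$, which relies on the inhomogeneous discrete Calder\'on reproducing formula in Lemma \ref{icrf} and the kernel estimates \eqref{4.23x} and \eqref{4.23y}. Concatenating these three inclusions and the accompanying norm comparisons gives $\ilips=\ilipsb=\ihb$ with equivalent quasi-norms.

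For part (ii), the assembly is even shorter. Under the assumption $p\in(1,\infty)$ and $q\in(1,\infty]$, observe that $p\in[1,\infty]$ and $q\in(\omega/[\omega+(\beta\wedge\gamma)],\infty]$ because $\omega/[\omega+(\beta\wedge\gamma)]<1<q$. Hence Proposition \ref{ilipinbtl}(ii) applies and yields $\ilipst\subset\ihf$ with $\|f\|_{\ihf}\lesssim\|f\|_{\ilipst}$. In the other direction, Proposition \ref{ibtlinlip}(iii), whose hypotheses are exactly $p\in(1,\infty)$ and $q\in(1,\infty]$, gives $\ihf\subset\ilipst$ with $\|f\|_{\ilipst}\lesssim\|f\|_{\ihf}$, the key ingredients being Lemma \ref{10.18.5}, the Fefferman--Stein vector-valued maximal inequality in Lemma \ref{fsvv}, and the boundedness of the Hardy--Littlewood maximal operator $M$ on $L^p(\cx)$ for $p>1$. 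Combining these two embeddings produces $\ilipst=\ihf$ with equivalent norms.

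The main obstacle is therefore not the deduction of Theorem \ref{idc} itself but rather the earlier verification of the individual propositions, in particular Proposition \ref{ibtlinlip}, whose (ii) and (iv) parts require the local lower bound of $\cx$ and the inhomogeneous embedding Lemma \ref{iembed} to handle $p$ close to (or equal to) $1$. Once those building blocks are in place, the present theorem reduces to a formal chain of inclusions, which is why the authors state that the details are omitted.
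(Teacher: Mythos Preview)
Your proposal is correct and follows exactly the approach the paper indicates: the theorem is deduced by chaining Proposition \ref{pro-lipi}(i), Proposition \ref{ilipinbtl}, and Proposition \ref{ibtlinlip}, which is precisely why the authors write ``we omit the details.'' Your identification of which parts of each proposition are invoked (namely \ref{ibtlinlip}(i) and (iii), not (ii) or (iv), so no lower-bound assumption is needed) is accurate.
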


\bigskip

\noindent Fan Wang, Dachun Yang (Corresponding author) and Wen Yuan

\medskip

\noindent Laboratory of Mathematics and Complex Systems (Ministry of Education of China),
School of Mathematical Sciences, Beijing Normal University, Beijing 100875, People's Republic of China

\smallskip

\noindent{\it E-mails:} \texttt{fanwang@mail.bnu.edu.cn} (F. Wang)

\noindent\phantom{{\it E-mails:} }\texttt{dcyang@bnu.edu.cn} (D. Yang)

\noindent\phantom{{\it E-mails:} }\texttt{wenyuan@bnu.edu.cn} (W. Yuan)

\bigskip

\noindent Ziyi He

\medskip

\noindent School of Science, Beijing University of Posts and Telecommunications, Beijing 100876,
People's Republic of China

\medskip

\noindent{\it E-mail:} \texttt{ziyihe@bupt.edu.cn}

\end{document}